\newcommand{\LoadPackagesNow}{}
\newcommand{\LoadPackageLater}[1]{%
   \g@addto@macro{\LoadPackagesNow}{%
      \usepackage{#1}%
   }%
}
\let\chapter\section
\definecolor{pdfurlcolor}{rgb}{0,0,0.6}
\definecolor{pdffilecolor}{rgb}{0.7,0,0}
\definecolor{pdflinkcolor}{rgb}{0,0,0.6}
\definecolor{pdfcitecolor}{rgb}{0,0,0.6}
\g@addto@macro\bfseries{\boldmath}
\newcommand{\ifargdef}[3][{}]{\ifthenelse{\equal{#2}{}}{#1}{#3}}
\newtheoremstyle{claim}
	{\topsep}{\topsep}%
	{\itshape}
	{}
	{\bfseries\boldmath}
	{}
	{.5em}
	{\thmname{#1} \thmnumber{#2} \thmnote{(#3)}}
\newtheoremstyle{definition}
	{\topsep}{\topsep}%
	{}
	{}
	{}
	{}
	{.5em}
	{{\bfseries\thmname{#1} \thmnumber{#2}} \thmnote{(#3)}}
\newtheoremstyle{remark}
	{\topsep}{\topsep}%
	{}
	{}
	{}
	{}
	{.5em}
	{{\itshape\thmname{#1}:}}
\declaretheorem[style=claim,numberwithin=section]{theorem}
\declaretheorem[style=claim,sibling=theorem]{lemma}
\declaretheorem[style=claim,sibling=theorem]{corollary}
\declaretheorem[style=remark]{remark}
\declaretheorem[style=definition,sibling=theorem]{definition}
\newcommand{\opleft}[1]{\mathopen{}\left#1}
\newcommand{\opright}[1]{\right#1\mathclose{}}
\newcommandx{\braces}[4]{%
\ifstrequal{#3}{normal}{#1#4#2}{%
\ifstrequal{#3}{auto}{\left#1#4\right#2}{%
\ifstrequal{#3}{opauto}{\opleft#1#4\opright#2}{%
#3#1#4#3#2}}}%
}
\newcommandx{\opannot}[3][3=\downarrow]{\stackrel{\mathclap{\substack{#1 \\ #3 \vspace{2pt}}}}{#2}}
\newcommandx{\lineannot}[3][3=\rightarrow]{\mathllap{\boxed{\text{\textsmaller{#1}}} #3} #2}
\newcommandx{\multilineannot}[4][4=\rightarrow]{\mathllap{\boxed{\parbox{#1}{\RaggedRight\textsmaller{#2}}} #4} #3}
\newcommand{\N}{\mathbb{N}} 
\newcommand{\R}{\mathbb{R}} 
\newcommand{\suchthat}[1][normal]{\ifstrequal{#1}{normal}{\mid}{#1|}} 
\newcommand{\dist}[2]{\operatorname{dist}(#1, #2)} 
\newcommandx{\intvcl}[3][1=normal]{\braces{[}{]}{#1}{#2, #3}} 
\newcommandx{\intvop}[3][1=normal]{\braces{(}{)}{#1}{#2, #3}} 
\newcommandx{\intvclop}[3][1=normal]{\braces{[}{)}{#1}{#2, #3}} 
\newcommandx{\intvopcl}[3][1=normal]{\braces{(}{]}{#1}{#2, #3}} 
\DeclareMathOperator*{\argmin}{argmin} 
\DeclareMathOperator*{\sign}{sign} 
\newcommandx{\abs}[2][1=normal]{\braces{\lvert}{\rvert}{#1}{#2}} 
\newcommandx{\ceil}[2][1=normal]{\braces{\lceil}{\rceil}{#1}{#2}} 
\newcommandx{\floor}[2][1=normal]{\braces{\lfloor}{\rfloor}{#1}{#2}} 
\newcommandx{\round}[2][1=normal]{\braces{[}{]}{#1}{#2}} 
\newcommandx{\der}[1]{D^{#1}} 
\newcommandx{\partder}[4][1={},4={}]{\frac{\partial^{#4} #2}{\partial #3^{#4}}\ifargdef{#1}{\Big|_{#1}}} 
\newcommandx{\integ}[4][1={},2={}]{\int_{#1}^{#2} #3 \, #4} 
\newcommandx{\asympffaster}[2][1=normal]{o\braces{(}{)}{#1}{#2}} 
\newcommandx{\asympfaster}[2][1=normal]{O\braces{(}{)}{#1}{#2}} 
\newcommandx{\asympeq}[2][1=normal]{\Theta\braces{(}{)}{#1}{#2}} 
\newcommandx{\asympsslower}[2][1=normal]{\omega\braces{(}{)}{#1}{#2}} 
\newcommandx{\asympslower}[2][1=normal]{\Omega\braces{(}{)}{#1}{#2}} 
\newcommandx{\norm}[2][1=normal]{\braces{\|}{\|}{#1}{#2}} 
\renewcommandx{\sp}[3][1=normal]{\braces{\langle}{\rangle}{#1}{#2, #3}} 
\newcommand{\adj}[1]{{#1}^\ast} 
\newcommandx{\End}[2][2={}]{\mathcal{L}\opleft( #1 \ifargdef{#2}{, #2} \opright)} 
\DeclareMathOperator{\spann}{\operatorname{span}} 
\newcommandx{\measure}[2][1=normal]{\operatorname{vol}\braces{(}{)}{#1}{#2}} 
\newcommandx{\Leb}[3][1={},3=normal]{L^{#2}\ifargdef{#1}{\braces{(}{)}{#3}{#1}}{}} 
\newcommandx{\Lebnorm}[4][1=normal,3={2},4={}]{\norm[#1]{#2}_{\Leb[#4]{#3}}} 
\renewcommandx{\l}[3][1={},3=normal]{\ell^{#2}\ifargdef{#1}{\braces{(}{)}{#3}{#1}}} 
\newcommandx{\lnorm}[4][1=normal,3={2},4={}]{\norm[#1]{#2}_{\l[#4]{#3}}} 
\newcommandx{\Smooth}[4][1={},3={},4=normal]{C_{#3}^{#2}\ifargdef{#1}{\braces{(}{)}{#4}{#1}}} 
\newcommandx{\Schwartz}[2][1={},2=normal]{\mathscr{S}\ifargdef{#1}{\braces{(}{)}{#2}{#1}}} 
\newcommandx{\Schwartzpoly}[2][1=normal]{\braces{\langle}{\rangle}{#1}{\abs[#1]{#2}} } 
\newcommandx{\Tempdistr}[2][1={},2=normal]{\mathscr{S}'\ifargdef{#1}{\braces{(}{)}{#2}{#1}}} 
\newcommandx{\distrinp}[3][1=normal]{\braces{\langle}{\rangle}{#1}{#2, #3}} 
\newcommand{\Linedistr}[1][]{\mathfrak{L}\ifargdef{#1}{_{#1}}{}} 
\newcommandx{\ft}[3][1=default,2=auto]{
\ifstrequal{#1}{default}{\widehat{#3}}{
\ifstrequal{#1}{long}{{\braces{(}{)}{#2}{#3}}^{\wedge}}{}}} 
\newcommandx{\ift}[3][1=default,2=auto]{
\ifstrequal{#1}{default}{\check{#3}}{
\ifstrequal{#1}{long}{{\braces{(}{)}{#2}{#3}}^{\vee}}{}}} 
\newcommand{\sk}[1]{{\color{black}{#1}}}
\title{\bfseries Approximation of Generalized Ridge Functions in High Dimensions}
\author{\hspace*{-1cm}
  Sandra Keiper\\[.5em]
 \textsc{\hspace*{-1cm}Department of Mathematics, Technische Universit\"at Berlin}\\
  10623 Berlin, Germany \\[.5em]
  \hspace*{-1cm}E-mails: \href{mailto:keiper@math.tu-berlin.de}{keiper@math.tu-berlin.de}
}
\begin{document}
\listoftodos
\newcommand{\OpAnalysis}[1]{T_{#1}} 
\newcommand{\OpSynthesis}[1]{\adj T_{#1}} 
\newcommand{\OpFrame}[1]{S_{#1}} 
\newcommand{\defsf}{\varphi} 
\newcommand{\InpSp}{\mathcal{H}} 
\newcommand{\InpSpK}{\InpSp_K} 
\newcommand{\ProK}{P_K} 
\newcommand{\InpSpM}{\InpSp_M} 
\newcommand{\ProM}{P_M} 
\newcommand{\sig}{x^0} 
\newcommand{\sigrec}{x^\star} 
\newcommand{\PF}{\Phi} 
\newcommand{\pf}{\phi} 
\newcommand{\cluster}{\Lambda} 
\newcommand{\concentr}[2]{\kappa\ifargdef{#1}{\opleft( #1, #2 \opright)}} 
\newcommand{\clustercoh}[2]{\mu_c \ifargdef{#1}{( #1 , #2)}} 
\newcommand{\anorm}[2]{\norm{#1}_{1,#2}} 
\newcommand{\ver}{\mathrm{v}} 
\newcommand{\hor}{\mathrm{h}} 
\newcommand{\dir}{\imath} 
\newcommand{\meyerscal}{\phi} 
\newcommand{\Scalfunc}{\Phi} 
\newcommand{\Corofunc}{W} 
\newcommand{\Coro}{\mathscr{K}} 
\newcommand{\conefunc}{v} 
\newcommand{\Conefunc}[1]{V_{(#1)}} 
\newcommand{\Cone}[1]{\mathscr{C}_{(#1)}} 
\newcommand{\pscal}{A} 
\newcommand{\pshear}{S} 
\newcommand{\pscalcone}[2]{A_{#1,(#2)}} 
\newcommand{\shearcone}[1]{S_{(#1)}} 
\newcommand{\unishplain}{\psi} 
\newcommand{\unishplainft}{\ft{\unishplain}} 
\newcommand{\unish}[5][{}]{\unishplain_{#3,#4,#5}^{#2\ifargdef{#1}{,(#1)}}} 
\newcommand{\unishft}[5][{}]{\unishplainft_{#3,#4,#5}^{#2\ifargdef{#1}{,(#1)}}} 
\newcommand{\Scalparamdomain}{\mathsf{A}} 
\newcommand{\aj}{\alpha_j} 
\newcommandx{\Unish}[3][1=\meyerscal,2=\conefunc,3=(\aj)_j]{\operatorname{SH}(#1, #2, #3)} 
\newcommandx{\UnishLow}[1][1=\meyerscal]{\operatorname{SH}_{\mathrm{Low}}(#1)} 
\newcommandx{\UnishInt}[3][1=\meyerscal,2=\conefunc,3=(\aj)_j]{\operatorname{SH}_{\mathrm{Int}}(#1, #2, #3)} 
\newcommandx{\UnishBound}[3][1=\meyerscal,2=\conefunc,3=(\aj)_j]{\operatorname{SH}_{\mathrm{Bound}}(#1, #2, #3)} 
\newcommand{\Unishgroup}{\Gamma} 
\newcommand{\Unishind}{\gamma} 
\newcommand{\lmax}{l_j} 
\newcommand{\weight}{w} 
\newcommand{\wlen}{\rho} 
\newcommand{\model}{{\weight\Linedistr}} 
\newcommand{\modelrec}{\model^\star} 
\newcommand{\Corofilter}{F} 
\newcommand{\mdiam}{h} 
\newcommand{\mask}[1]{\mathscr{M}_{#1}} 
\newcommand{\Unishshort}{\Psi} 
\newcommand{\scalpm}[1]{#1^{\pm1}} 
\newcommand{\translind}[2]{#1^{(#2)}} 
\maketitle
\begin{abstract}

 This paper  studies the approximation of generalized ridge functions, namely of functions which are constant along some submanifolds of $\R^N$. We introduce the notion of linear-sleeve functions, whose function values only depend on the distance to some unknown linear subspace $L$. We propose two effective algorithms to approximate linear-sleeve functions $f(x)=g(\dist{x}{L}^2)$, when both the linear subspace $L\subset \R^N$ and the function $g\in C^s[0,1]$ are unknown. We will prove error bounds for both algorithms and provide an extensive numerical comparison of both. We further propose an approach of how to apply these algorithms to capture general sleeve functions, which are constant along some lower dimensional submanifolds.

\end{abstract}

\vspace{.1in}

\textbf{Key words.}
Ridge Functions, Function Approximation, Big Data, High Dimensions, Active Variables, Active Subspaces, Optimization over Grassmannian Manifolds

\vspace{.1in}


\section{Introduction}
Nowadays we are living in a world where the acquisition, analysis and storage of big data play a major role. Digital communication, medical imaging, seismology and cosmology are only a few examples, which show the necessity to handle \sk{massive data sets}. Usually data is modeled as functions $f: X\rightarrow Y$, where $X$ can be $\mathbb{R}^N$ or a general curved surface. In particular the approximation of such functions from point queries, \sk{when $N$ is very large,} is an important field. Such problems arise, for example, in learning theory \cite{wain}, in modeling physical and biological systems \cite{Phys}, as well as neural networks \cite{Candes} and in parametric and stochastic PDEs \cite{Schwab}.

Because of the so-called curse of dimensionality, a notion introduced in 1961 by Richard Bellman \cite{Bellmann}, the handling of \sk{functions in many variables} is an ambitious task. Namely, functions on $\R^N$ with smoothness of order $s$ can in general be recovered with an accuracy of at most $n^{-s/N}$, applying $n$-dimensional spaces of linear or nonlinear approximation. Thus, the learning of functions depending on a large number of variables is particularly difficult even with smoothness assumptions on $f$ \cite{DeVoreLorentz,Nonlinear,Novak}. Certainly, we need to impose additional structure on $f$ to achieve efficient learning \cite{cevherVec,cevherMatr,daubechiesRidge,fornasierVybiral,kolleck}. 

\subsection{Ridge Functions}
One popular approach to break the curse of dimensionality is to consider ridge functions of the form 
\begin{align}\label{eqn:matrixridge}
 \R^N\supseteq \Omega \ni x \mapsto f(x)=g(Ax),
\end{align}
where $A\in \R^{m\times N}$, with $m$ considerably smaller than $N$, is usually called \emph{ridge matrix} \sk{and} $g\in C^s(\R^m)$, $1\!\le\!s\le\!2$, is called the \emph{ridge profile}. The requirement for the function to have at least one derivative is essential. In fact, it was shown in \cite{Entropy} that ridge functions need to have a first derivative uniformly bounded away from zero in the origin in order to reduce the complexity of the approximation task. 

 For particular choices of $A$ different approaches \sk{to successfully learn ridge functions} have been investigated. For example, if $A$ is of the form $A^T=[e_{i_1},\dots,e_{i_m}]$, for $e_{i_k}\in \R^N$ being the canonical unit vectors and $i_k\in\{1,\dots,N\}$, $f$ can be rewritten as a function which depends only on a few variables, i.e., $f(x_1,\dots,x_N)=g(x_{i_1},\dots,x_{i_m})$. An approach to recover the active variables and approximating the ridge profile $g$ has been given in \cite{Wojta}. It was shown that by adaptive sampling we can obtain similar estimates as if the active coordinates $i_1,\dots, i_m$ are known to us.
 


\sk{Another special case of \eqref{eqn:matrixridge}} is to assume that $m=1$  and that the matrix $A$ is, therefore, a vector, usually called \emph{ridge vector} and denoted by $A^T=:a$. In this case, $f$ is of the form
\begin{align}\label{eqn:ridge}
 f(x)=g(\langle x,a \rangle ).
\end{align}
The recovery of such ridge functions from point queries was first considered by Cohen, Daubechies, DeVore, Kerkyacharian, and Picard in \cite{daubechiesRidge} for ridge functions with a positive ridge vector.
It was shown that the accuracy of their method is close to the approximation rate of one-dimensional continuous functions.

However, the algorithm from \cite{daubechiesRidge} does not apply to arbitrary ridge vectors. In \cite{cevherVec,fornasierVybiral, kolleck} new algorithms were introduced to waive the assumption of a positive ridge vector.
The main idea of the algorithm in \cite{kolleck} is to approximate the gradient of $f$ by divided differences, exploiting the fact that the gradient of $f$ is some scalar multiple of the ridge vector. The accuracy of the approximation of the gradient is determined by the choice of the step size in the computation of the divided differences, whereas the number of sampling points is fixed.

The approach by Fornasier, Schnass and Vybiral \cite{fornasierVybiral} is rather based on compressed sensing \sk{and applies to \eqref{eqn:matrixridge} very generally}. Thus, not the gradient but the directional derivatives of $f$ were approximated at a certain number of random points in random directions.
However, especially for the methods in \cite{fornasierVybiral, daubechiesRidge}, the authors need a restrictive assumption to use compressed sensing techniques. That is, the ridge vector $a$ can be well-approximated by a sparse subset of its coefficients. In \cite{cevherVec} this  assumption could be removed by leveraging the Dantzig selector \cite{Dantzig} to recover an approximation of $a$.

However, the structure assumption on $f$ \sk{to be} a ridge function can be very restrictive. If we, for example, consider a sensor network, where we have a certain number of sensors, say $N$, which measure the moisture, temperature and pressure to forecast forest fire, the aim is to compute the risk of fire by a function $f:\R^{3N}\rightarrow \R$ depending on the measurements of the sensors. It is then very unlikely that the combination of measurements which yield the same risk of fire lie on a $3N-1$-dimensional hyperplane, since also parameters like topography and vegetation influence the prediction. Much more likely is the assumption that these combinations lie on a lower dimensional manifold.

\subsection{Sleeve Functions}
To allow for the recovery of more general functions, which are constant along some lower-dimensional submanifolds, we will introduce the notion of \emph{sleeve functions} and as a special case of \emph{linear-sleeve} functions.  \sk{Within this paper}  we will then investigate and analyze algorithms to capture linear-sleeve functions and we will propose a technique to apply these methods to general sleeve functions. 

\begin{definition}
 Let $g\in C^s[-r,r]$, $r\in \R_+$, $M\subset \R^N$ a \sk{$d$-dimensional} smooth submanifold of $\R^N$, and $\text{tub}_r(M):=\{x\in \R^N:\dist{x}{M}<r\}$, then we call $f:\text{tub}_r(M)\rightarrow \R$ a \emph{sleeve function} if we can rewrite $f$ in terms of $g$ by
 \begin{align}\label{eqn:manridge}
  f(x)=g(\dist{x}{M}^2),
 \end{align} 
 for $x\in \text{tub}_r(M)$. \sk{In the case where $M$ is a linear subspace, we call $f$ a \emph{linear-sleeve function} and denote $L:=M$, to emphasize the special case, i.e.,  we write:
 \begin{align}\label{eqn:linridge}
   f(x)=g(\dist{x}{L}^2).
 \end{align}}
\end{definition}

\sk{The need to restrict $g$ to a bounded domain is twofold; on the one hand, if we wish to recover $g$ from a finite number of sampling points, we do need this restriction and on the other hand, it is useful for the approximation task to have  a unique mapping $x\mapsto x_0$ with $\dist{x}{M}=\dist{x}{x_0}$.  Thus, in the case of $M$ being a linear subspace, $r$ can be chosen arbitrarily, where in the general case  $r$ is chosen to be the radius of a non-self-intersecting tube around M. For an illustration of linear-sleeve functions we refer to Figure \ref{fig:linsleeve}.}

Note that the notion of sleeve functions is indeed a generalization of ridge function, thus, if $L$ is an $N-1$-dimensional subspace, we can rewrite $\dist{x}{L}^2=\langle x,a\rangle^2$, where $a$ is the normal vector of $L$. Also note that this formulation is very different from the one introduced in \cite{fornasierVybiral}. Indeed, if $f$ is of the form $f=g(A\cdot)$, the level sets are linear subspaces, whereas this is not true for linear-sleeve functions (cf. Figure \ref{fig:linsleeve}).

Furthermore, observe that even by separating the approximation task in approximating $g$ and $M$, we cannot simply use manifold learning algorithms to approximate $M$, since manifold learning algorithms (cf. e.g. \cite{Mauro, Baraniuk}) usually assume that we can sample from the manifold. However, we need to reconstruct the level sets (or at least one, namely $M$) without knowing in advance to which level set the sampling points belong; actually it is very likely that all sampling points belong to different level sets.

\subsection{Our Contribution}
Our work studies the approximation of linear-sleeve function of the form $f(x)=g(\dist{x}{L})$ for $x\in \text{tub}_1(L)$, where $g\in C^s[0,1]$ and $L\subset R^N$ is a $d$-dimensional subspace of $\R^N$.  We will provide and analyze two different algorithms to capture linear sleeve functions from point queries. Our main contributions can be summarized as follows.
\begin{itemize}
 \item \emph{Adaptive Algorithm}. The first algorithm, to which we refer to as \emph{ATPE}, is based on the fact that the gradient of $f$ in some $x\in \R^N$ is perpendicular to the level set of $f$ in $x$. We will show that the restriction of $f$ to the plane, which is perpendicular to the gradient and which is the tangent plane  of the corresponding level set, is again a linear-sleeve function. We will then argue that applying the same fact iteratively to the restrictions of $f$, the tangent plane computed in the $N-d$-th step gives a reconstruction of $L$. In ATPE will then substitute the gradient by divided differences, because we cannot compute the gradient by point queries of $f$.  
 \item \emph{Optimization Algorithm} The second algorithm, to which we refer to as \emph{OGM}, is based on a minimization over the Grassmannian manifold. Namely, it will define an objective function, whose minimizer is $L$. However, we will see that we cannot define this objective functions using only point samples of $f$. In OGM we  therefore approximate this objective function by an objective function whose minimizer $\tilde{L}$ will be proven to be close to $L$.
 \item \emph{Error Bounds.} Those two algorithms are of a rather different nature. Whereas the approximation success of the first algorithm depends only on the error of the gradient approximation by divided differences, the success of the second algorithms depends on the error of the approximation of the objective function. 
 The first main theorem states that the error of the approximation of the $d$-dimensional subspace $L$ using ATPE can be bounded by
\begin{align}
 \|L-\tilde{L}\|_{\text{HS}}\le C(1+K)^{N-d}\sqrt{N-d}h^s,
\end{align}
where $\tilde{L}$ is the approximation of $L$, $h$ can be chosen arbitrarily small but fixed, $C,K$ are some positive constants and the number of function evaluations is given by $(N+1)(N-d)$. 
For the second main theorem we prove that using OGM the approximation error is given by 
\begin{align}
\|L-\tilde{L}\|_{\text{HS}}\le \tilde{C} \sqrt{N-d}M^{-1},
\end{align}
where $M$ is the number of function samples and $\tilde{C}$ a constant only depending polynomial on the dimension
of the space. Note that OGM, differently to ATPE, yields a reconstruction error which decreases with the number of sampling points and is not constrained by a fixed number of sampling points and is, therefore, advantageous. However, the first algorithm is more promising to apply also to the manifold case.
 \item \emph{Impact on the Approximation of General Sleeve Functions.} The next step would be to find algorithms to recover general sleeve functions of the type \eqref{eqn:manridge}. Due to the fact, that we would need to optimize over all possible $d$-dimensional submanifolds, to approximate general sleeve functions in a similar way as proposed by OGM, we anticipate that an adaptation of ATPE is more promising.

We believe that one can also use gradient approximations to capture general sleeve functions of the type \eqref{eqn:manridge}. Roughly said, we propose to use the gradients to compute samples from the manifold. More precisely, knowing the gradient of $f$ at some point $x$, again would enable us to approximate the sleeve profile $g$ and, under additional assumption, we could use the direction given by the gradient and the value of $f$ in $x$ to translate $x$ to the manifold. A careful estimation of the distribution of the translated sample points should then enable us to apply manifold learning algorithms (e.g \cite{Mauro}) to estimate the manifold $M$. 
 
\end{itemize}

The paper is organized as follows: After introducing some preliminaries, we will present and analyze ATPE in Section \ref{sec:Gradest}. In Section \ref{sec:Opt} we will introduce and analyze OGM.  The consideration will be completed in Section \ref{sec:num} by some promising numerical results.

\begin{figure}
\begin{center}\includegraphics[scale=0.6]{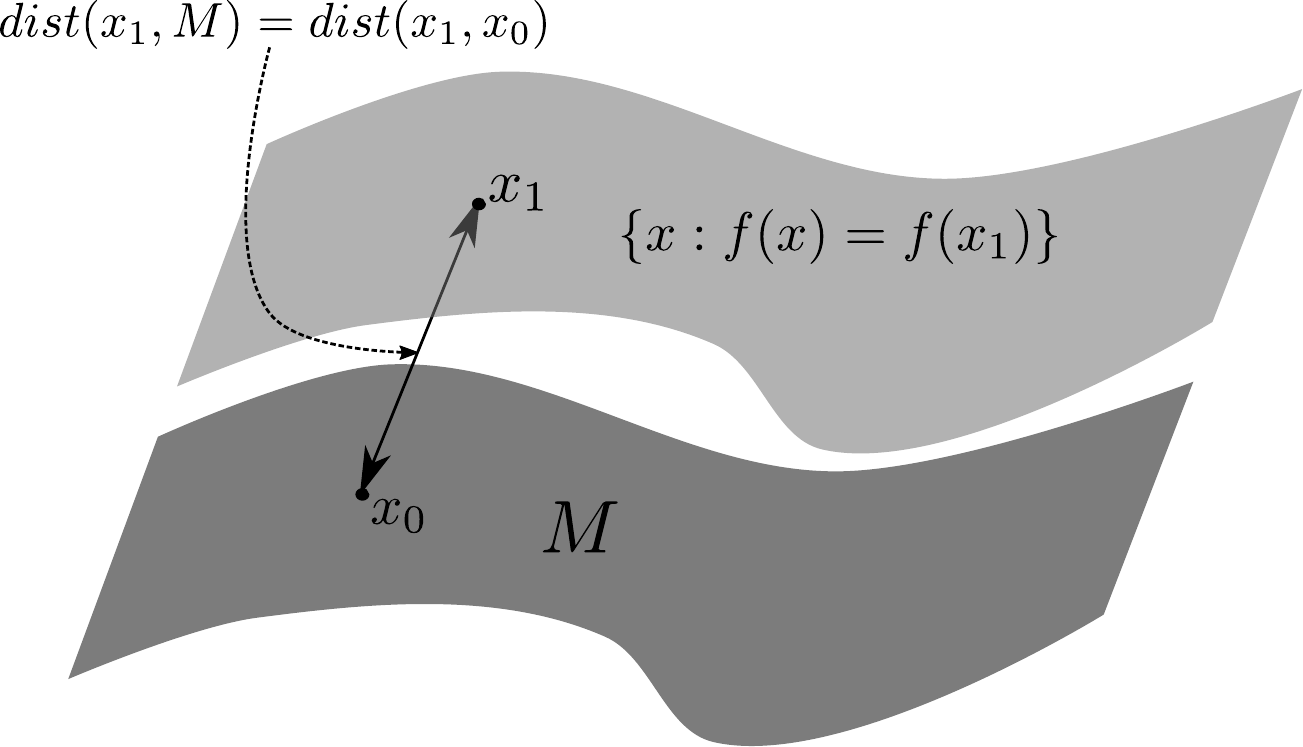}\end{center}\caption[Two level sets of a generalized ridge function of the form $f(x):=g(\dist{x}{M})$.]{Generalized ridge function of the form $f(x):=g(\dist{x}{M})$}
\end{figure}

\section{Preliminaries}\label{sec:pre}
To put our results in a precise setting, we introduce the class $\mathcal{LR}(s)$ of all linear-sleeve functions $f(x):=g(\dist{x}{L}^2)$, where $g\in C^s[0,1]$ and $L\subset \R^N$ a subspace. We use the following norm, subsequently referred to as \emph{H\"older norm}, on $C^s$. For $k< s\le k+1$, with $k\in \N$, we define
\begin{align}
 \|g\|_{C^s}:=\|g\|_{C^s[0,1]}:=|g^{(k)}|_{\text{Lip}(s-k)}+\sum_{j=0}^k\|g^{(j)}\|_{C[0,1]},
\end{align}
where $g^{(j)}$ denotes the $j$-th derivative of $g$, and, for $0<\beta\le 1$, we set
\begin{align}
 |g|_{\text{Lip}(\beta)}:=\sup_{x\neq y}\frac{\left|g(x)-g(y)\right|}{\left|x-y\right|^\beta}.
\end{align}
Note that we call $g$ \emph{Lipschitz continuous} if $|g|_{\text{Lip}(1)}$ is bounded. We then call $|g|_{\text{Lip}(1)}$ \emph{Lipschitz constant} or \emph{Lipschitz norm} of $g$.
If we want to highlight the dimension of the vector space, we sometimes write $\|\cdot\|_{\ell_p^N}:=\|\cdot\|_p$ for the $\ell_p$ norm of a vector, for $p=1,2$. The weak $\ell_p$ norm of a vector $x\in \R^N$ is the smallest constant $M$, such that
\begin{align}
 \#\{i:x_i \ge\varepsilon\}\le M\varepsilon^{-1/p}, \varepsilon>0. 
\end{align}
We further recall the following useful property of any norm on $\R^N$. 
\begin{lemma}[\cite{kolleck}]\label{lemma:kolleck}
 Let $\|\cdot\|$ be any norm on $\R^N$ and $x\in\R^N$ with $\|x\| = 1$, $\tilde{x} \in \R^N\setminus\{0\}$ and $\lambda \in \R$. Then
 \begin{align}
 \|\sign(\lambda)\frac{\tilde{x}}{\|\tilde{x}\|}-x\|\le \frac{2\|\tilde{x}-\lambda x\|}{\|\tilde{x}\|}.
 \end{align}
\end{lemma}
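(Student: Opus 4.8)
The plan is to reduce the statement to two elementary facts that hold for every norm on $\R^N$: the algebraic identity $\sign(\lambda)\,\lambda = |\lambda|$ (valid also at $\lambda=0$), and the reverse triangle inequality $\bigl|\,\|a\|-\|b\|\,\bigr|\le\|a-b\|$. One could split the argument into the cases $\lambda>0$, $\lambda<0$ and $\lambda=0$, but a single computation covers all of them at once.

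First I would bring the left-hand side over the common denominator $\|\tilde{x}\|$, so that it suffices to bound the numerator $\sign(\lambda)\,\tilde{x}-\|\tilde{x}\|\,x$. Adding and subtracting $\sign(\lambda)\,\lambda\,x$ and using $\sign(\lambda)\,\lambda=|\lambda|$ gives the decomposition
\begin{align}
\sign(\lambda)\,\tilde{x}-\|\tilde{x}\|\,x=\sign(\lambda)\,(\tilde{x}-\lambda x)+\bigl(|\lambda|-\|\tilde{x}\|\bigr)\,x .
\end{align}
Applying the triangle inequality together with $|\sign(\lambda)|\le 1$ and $\|x\|=1$, the numerator is at most $\|\tilde{x}-\lambda x\|+\bigl|\,|\lambda|-\|\tilde{x}\|\,\bigr|$. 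Since $|\lambda|=\|\lambda x\|$, the reverse triangle inequality yields $\bigl|\,|\lambda|-\|\tilde{x}\|\,\bigr|=\bigl|\,\|\lambda x\|-\|\tilde{x}\|\,\bigr|\le\|\lambda x-\tilde{x}\|=\|\tilde{x}-\lambda x\|$, so the numerator is bounded by $2\|\tilde{x}-\lambda x\|$; dividing by $\|\tilde{x}\|>0$ finishes the proof.

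There is essentially no obstacle here: the only point requiring a second of attention is the degenerate value $\lambda=0$, where $\sign(\lambda)=0$ and the first summand in the decomposition disappears. The computation above still goes through verbatim, since all the identities used remain valid, and one may alternatively just check directly that in this case the left-hand side equals $\|x\|=1$ while the right-hand side equals $2\|\tilde{x}\|/\|\tilde{x}\|=2$, so the claimed inequality holds with room to spare.
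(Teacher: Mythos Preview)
Your proof is correct and complete; the decomposition together with the reverse triangle inequality is exactly the right idea, and your handling of the degenerate case $\lambda=0$ is fine. Note that the paper does not actually supply a proof of this lemma---it is merely quoted from \cite{kolleck}---so there is nothing to compare against.
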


We will denote the $i$-th canonical unit vector, with a one in the $i$-th coordinate and zero elsewhere, by $e_i$. The Grassmannian manifold of all $d$-dimensional subspaces of $\R^N$ is denoted by $G(d,N)$ and for an orthogonal projection $P:\R^N\rightarrow \R^N$, the operator norm is given by the Hilbert-Schmidt norm
\begin{align}
 \|P\|_{\text{HS}}=\sqrt{\sum_{i=1}^N\|Pe_i\|^2_2}.
\end{align}
The orthogonal complement of a subspace $P$ is denoted by $P^{\perp}$ and the distance of a vector $x\in \R^N$ to a subspace, respective subset, $P\subset\R^N$ is defined by
\begin{align}
 \dist{x}{P}:=\min_{y\in P}\|x-y\|_2.
\end{align}
In the sequel, for two quantities $A,B \in \R$, which may depend on several parameters, we shall write $A\lesssim B$, if there exists a constant $C>0$ such that $A\le CB$, uniformly in the parameters. If the converse
inequality holds true, we write $A\gtrsim B$ and if both inequalities hold, we shall write $A\asymp B$.

Finally, we want to recall some approximation properties of functions in $C[0,1]$. For two given integers $S>1$ and $M\ge2$, we consider the space $\mathcal{S}_{h,S}$, $h:=1/M$, of piecewise polynomials of degree $S-1$ with equally spaced knots at the points $ih$, $i = 1, \dots , M - 1$, and having continuous derivatives of order $S - 2$. It is well-known (cf. e.g. \cite{DeVoreLorentz}) that there is a class of linear operators $Q_h$ which maps $C[0, 1]$ into $\mathcal{S}_{h,S}$. These operators are usually called \emph{quasi-interpolants}. For a function $g\in C[0,1]$, the application of a quasi-interpolant only depends on the values of $g$ at the points $ih$, $i = 0,\dots , M$. Furthermore, we can choose the operator $Q_h$ to fulfill the following  property: For all $g\in C^s([0,1])$ 
\begin{align}\label{Prob1}
\|g - Q_hg\|_{C[0,1]} \le C|g|_{C^s[0,1]}h^s,
\end{align}
  with $C$, a constant depending only on $S$ \cite{DeVoreLorentz}.


\section{An Adaptive Algorithm Estimating Tangent Planes}\label{sec:Gradest}
An obvious approach to approximate sleeve functions of the form \eqref{eqn:manridge} is to apply the methods to recover classical ridge functions of the form \eqref{eqn:ridge}, i.e., to approximate a linear sleeve function by a classical ridge function. Of course, this method can only provide good approximation results if the sleeve function is close to a classical ridge function in a certain sense, cf. \cite{Masterarbeit}. However, it would be more convenient to approximate a function of the form \eqref{eqn:manridge} by an estimator of the same form. 

\begin{figure}
\begin{center}\includegraphics[scale=0.4]{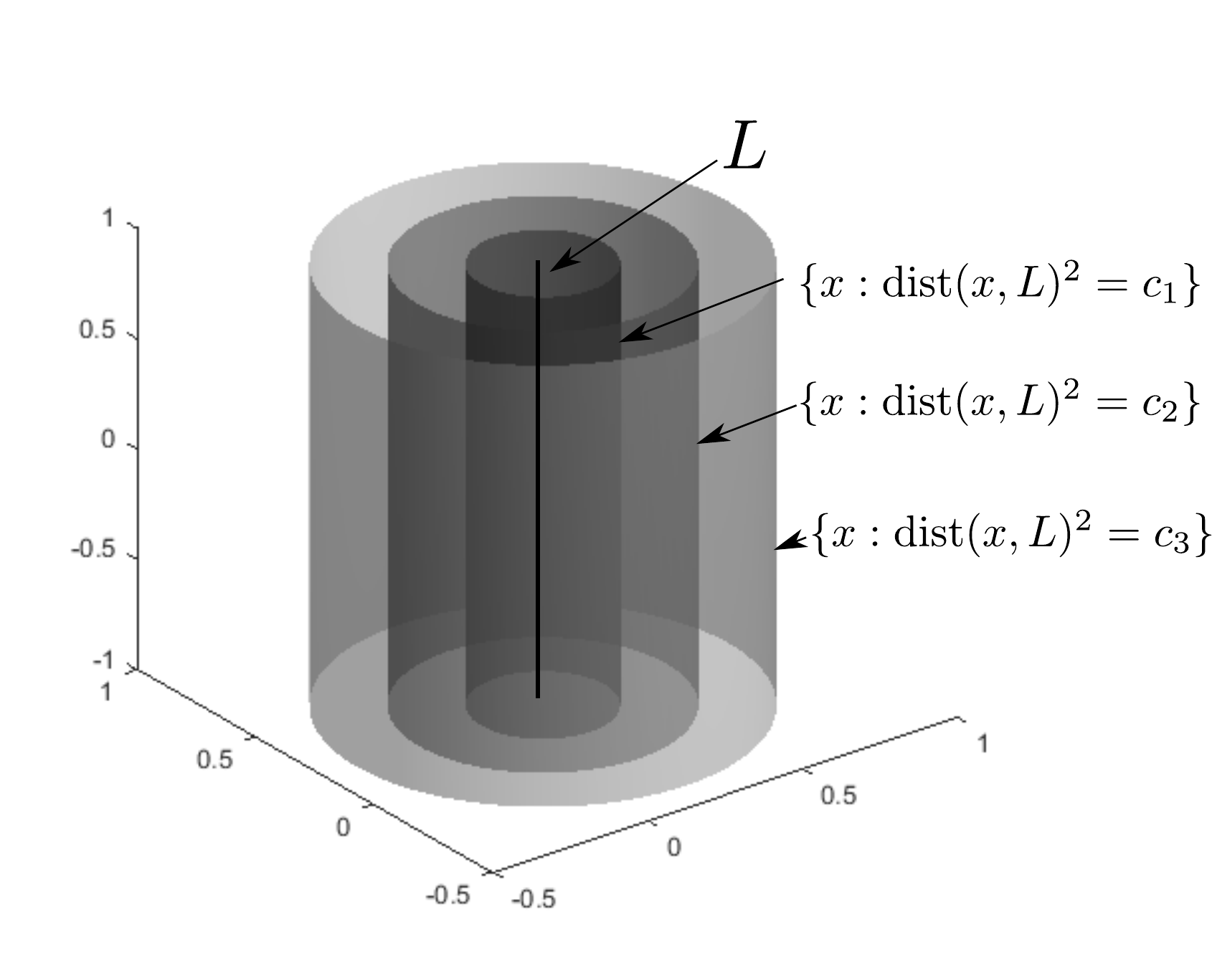}\hspace{1cm}\includegraphics[scale=0.65]{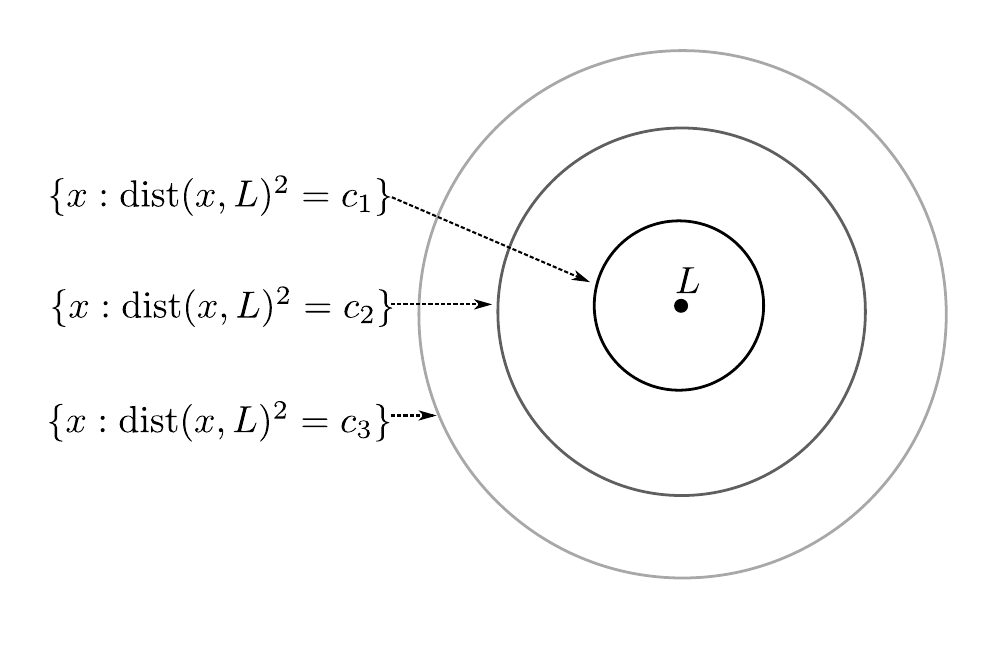}
\put(-338,-5){(a)}
\put(-65,-5){(b)}
\caption[Generalized ridge function of the form $f(x):=g(\dist{x}{L}^2)$.]{Generalized ridge function of the form $f(x):=g(\dist{x}{L}^2)$, where $L$ is some one-dimensional affine subspace of $\R^N$. The figures show three level sets for the function $f(x_1,x_2,x_3)=x_2^2+x_3^2=\dist{x}{L}^2$, where $L:=\spann{(1,0,0)}$ (blue line). (a) The level sets are illustrated in the ambient space $\R^3$. (b) Projection of the level sets onto the subspace orthogonal to $L$, which is here the $xy$-plane.}\label{fig:linsleeve}
\end{center}
\end{figure}

We first observe that we can rewrite a linear-sleeve function as
\begin{align}
 f(x)=g(\dist{x}{L}^2)=g(\|P_Px\|_2^2),
\end{align}
where $P_P$ is the orthogonal projection to the $(N-d)$-dimensional subspace $P$ orthogonal to $L$.  For simplicity we will denote the orthogonal projection of a vector $x\in \R^N$ to a subspace $H=\spann\{u_1,\dots,u_d\}\in G(d,N)$ by $Hx:=P_H(x)$. This notation relates to the matrix representation of an orthogonal projection given by $H=\sum_{i=1}^d u_iu_i^T$.

The algorithm, we will introduce in this section, will, similarly as in \cite{daubechiesRidge, kolleck}, exploit the fact
 that we can estimate the tangent plane in some $x_0\in \R^N$ of the $(N-1)$-dimensional submanifold
\begin{align*}
 \left\{x\in \R^N:\dist{x}{L}^2=\dist{x_0}{L}^2\right\}
\end{align*}
as the unique hyperplane which is perpendicular to the gradient of $f$ in $x_0$. We will then show that the function $f$ restricted to this tangent plane is again of the form \eqref{eqn:linridge}.
Of course, we cannot compute the gradient by sampling the function; in the subsequent proposed algorithm we, therefore, approximate the gradient by divided differences.

\begin{figure}[H]
\begin{center}
 \includegraphics[scale=0.5]{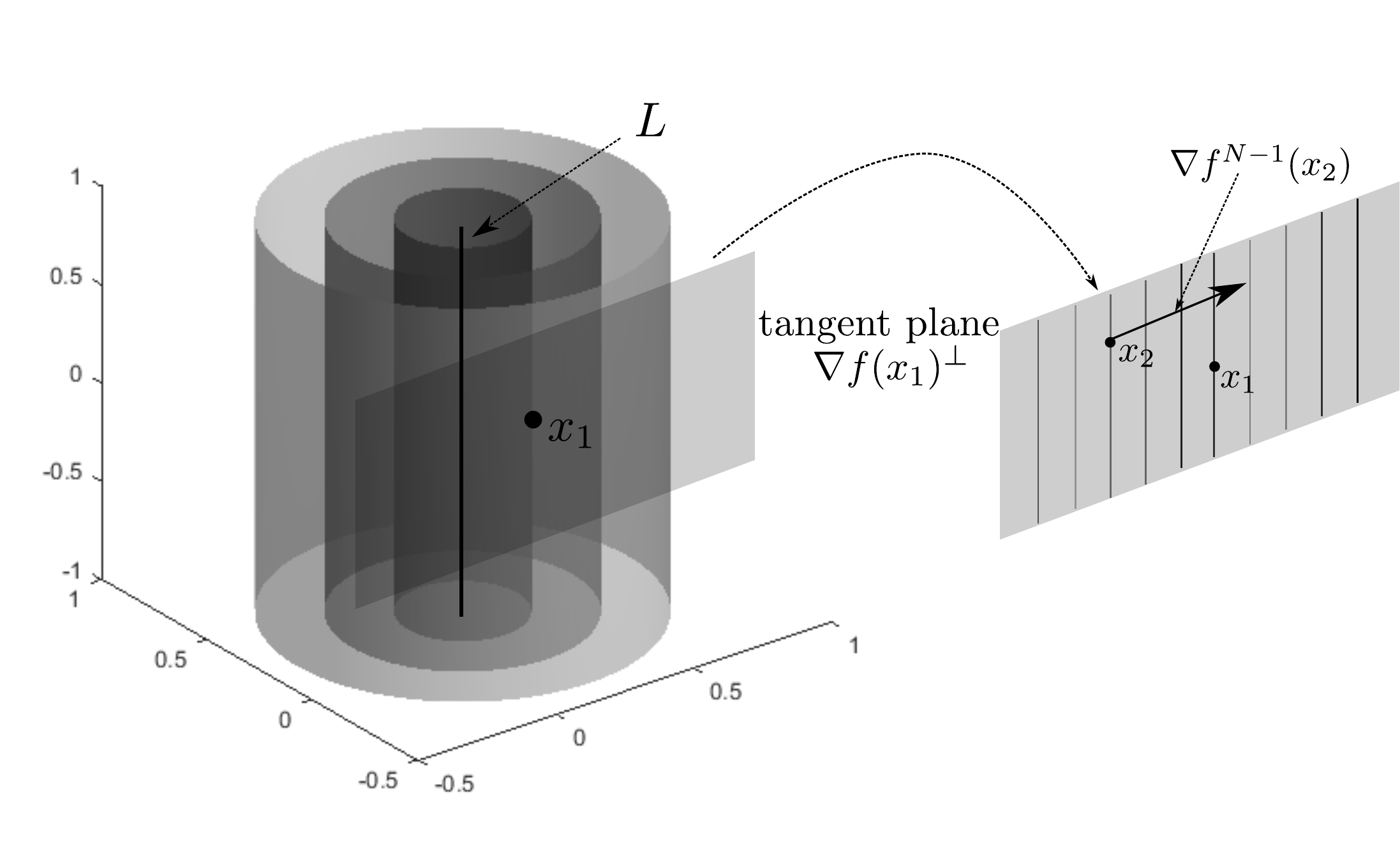}
 \caption{The restriction of $f$ to the affine subspace which is orthogonal to the gradient of $f$ in some $x_1$ is again a linear sleeve function. In the illustrated case we can recover $L$ after two steps, because the gradient of the restriction of $f$ in some $x_2$ is orthogonal to $L$.}\label{fig:GenLinAlg}
\end{center}
\end{figure}

\subsection{The Algorithm}

As mentioned before the idea of the first algorithm is to use the fact that the gradient of the linear-sleeve function $f$ in some $x_0$ is perpendicular to the level sets and that the restriction of $f$ to the corresponding tangent plane is again a linear sleeve function (cf. Figure \ref{fig:GenLinAlg}). We will further show that applying this fact iteratively to restrictions of $f$ finds after $N-d$ steps the wanted subspace $L$.  To prove this statement we introduce an adaptive algorithm to exactly recover the subspace $L$ by computing gradients of $f$ (see Algorithm \ref{GC}) and show in Theorem \ref{thm:gradest1} that this algorithm can recover the subspace $L$ exactly. Thus, our first aim is to show that the system, which is formed by the gradients of the restrictions of $f$, forms a basis for $P=L^{\perp}$. Observe that an essential step in this algorithm is to compute gradients of $f$ and is therefore not useful to approximate $f$ from point samples. It only serves as an auxiliary tool to introduce the first main Algorithm \ref{GE}.\\

\begin{algorithm}[H]
 \DontPrintSemicolon
\KwData{$f(\cdot)=g(\dist{\cdot}{L}^2)$.}
\KwResult{$T^d$.}
\Begin{
  $f^N \longleftarrow f$.\;
  $T^N \longleftarrow \R^N$.\;
  \For{$i=N,\dots,d+1$}{
    1. For some arbitrarily chosen normalized $x_i\in T^i$ compute $\nabla{f^i}(x_i)$.\;
2. $u_i\longleftarrow\nabla f^i(x_i)/\|\nabla f^i(x_i)\|_2$.\;
3. $T^{i-1}\longleftarrow\left(\spann\{u_i,\dots,u_N\}\right)^{\perp}$.\;
4. Let $f^{i-1}$ be the restriction of $f$ to $T^{i-1}$.\;}
    }
\caption{ATPC - Approximation by \underline{A}daptive \underline{T}angent \underline{P}lane \underline{C}omputation\label{GC}}
\end{algorithm}

As mentioned before this algorithm iteratively computes restrictions of $f$, such that after $N-d$ steps the restriction of $f$ will be exactly defined on $L$. Assume we have computed the tangent plane $T^i$ and the restriction $f^i$ of $f$ the subspace $T^i\subset\R^N$, for $i=N,\dots,d+1$. The algorithm then chooses uniformly at random a point $x_i\in T^i$ and computes the gradient of $f^i$ in $x_i$ (1.). It then normalizes this gradient (2.) and determines the subspace $T^{i-1}$ which is orthogonal to this gradient in $T^i$ (3.). Finally we restrict $f^i$ to $T^{i-1}$ and repeat this procedure until $i=d+1$. The following theorem states that ATPC indeed succeeds to recover $L$.

\begin{theorem}\label{thm:gradest1}
Let $f(x)=g(\dist{x}{L}^2)=g(\|Px\|_2^2)$, $g\in C^s[0,1]$, $1< s\le 2$, for $x\in \text{tub}_1(L)$ and the linear subspace $P^{\perp}=L\subset \R^N$. Compute $T^d$ as proposed in Algorithm \ref{GC}. Then $L$ coincides with $T^d$ almost surely (e.g. due to the Lebesque measure on $\R^N$).
\end{theorem}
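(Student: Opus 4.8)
The plan is a downward induction on the loop counter $i=N,N-1,\dots,d+1$. Using the paper's convention $Px=P_{L^{\perp}}(x)$ (so $P$ is the orthogonal projection onto $L^{\perp}$), I would maintain the invariant: on entering the iteration with counter $i$, the vectors $u_{i+1},\dots,u_N$ are well defined, orthonormal, and contained in $L^{\perp}$, and $T^{i}=(\spann\{u_{i+1},\dots,u_N\})^{\perp}$. The base case $i=N$ is trivial since $T^N=\R^N$ and the family is empty. Once the invariant survives down to $i=d+1$, the algorithm has produced $N-d$ orthonormal vectors $u_{d+1},\dots,u_N$ inside the $(N-d)$-dimensional space $L^{\perp}$; hence they form an orthonormal basis of $L^{\perp}$, so $\spann\{u_{d+1},\dots,u_N\}=L^{\perp}$ and $T^d=(L^{\perp})^{\perp}=L$.

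Two elementary facts feed the induction. First, since $\dist{x}{L}^2=\|Px\|_2^2=x^{\T}Px$ with $P$ an orthogonal projection, $\nabla f(x)=2g'(\|Px\|_2^2)\,Px$, so the ambient gradient is always a scalar multiple of $Px\in L^{\perp}$ and vanishes exactly when $g'(\|Px\|_2^2)=0$ or $Px=0$. Second, for a linear subspace $T$ the gradient of the restriction $f|_T$ at $x\in T$ is the orthogonal projection of the ambient gradient onto $T$; since $f^i$ is by definition the restriction of the \emph{original} $f$ to $T^i$, this yields $\nabla f^i(x_i)=2g'(\|Px_i\|_2^2)\,P_{T^i}(Px_i)$ directly, with no dependence on the earlier restrictions beyond the subspace $T^i$ itself.

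For the inductive step I would decompose $L^{\perp}=U_i\oplus U_i'$ orthogonally, where $U_i:=\spann\{u_{i+1},\dots,u_N\}$ and $U_i':=L^{\perp}\cap U_i^{\perp}$, so that $\R^N=L\oplus U_i\oplus U_i'$ and $T^i=(U_i)^{\perp}=L\oplus U_i'$, with $\dim U_i'=i-d\ge 1$. For $x_i=\ell+u'\in T^i$ ($\ell\in L$, $u'\in U_i'$) one gets $Px_i=u'$ (because $\ker P=L$ and $U_i'\subseteq L^{\perp}=\ran P$) and, since $u'\in U_i'\subseteq T^i$, also $P_{T^i}(Px_i)=u'$; hence $\nabla f^i(x_i)=2g'(\|u'\|_2^2)\,u'$. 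Consequently, as soon as this gradient is nonzero, its normalization $u_i=u'/\|u'\|_2$ lies in $U_i'$, which is orthogonal to $U_i=\spann\{u_{i+1},\dots,u_N\}$ and contained in $L^{\perp}$ — precisely the invariant with $i$ decreased by one. The final assertion $T^d=L$ then follows as above, using $L\subseteq T^d$ together with $\dim T^d=d=\dim L$.

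The only subtle point — and the sole reason for the "almost surely" — is guaranteeing $\nabla f^i(x_i)\neq 0$ for Lebesgue-almost every unit vector $x_i\in T^i$. In the decomposition above this requires $u'\neq 0$, which fails only on $L\cap T^i$ (null in the unit sphere of $T^i$, as $\dim L=d<\dim T^i$), and $g'(\|u'\|_2^2)\neq 0$, which fails only on the preimage of $\{t\in[0,1]:g'(t)=0\}$ under the smooth nonconstant map $x_i\mapsto\|Px_i\|_2^2$ on the unit sphere of $T^i$. I expect this to be the main obstacle to a fully rigorous statement: it is harmless whenever $\{g'=0\}$ is Lebesgue-null (e.g.\ $g$ not constant on any subinterval, or $g'$ bounded away from $0$ as is customary in ridge-type recovery), and transferring a null set in the parameter $t$ to a null set on the sphere is a standard coarea/Sard-type argument; I would either add such a nondegeneracy hypothesis explicitly or carry out this measure-theoretic estimate. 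Everything else is linear algebra, the bookkeeping of the nested subspaces being the only thing to keep straight.
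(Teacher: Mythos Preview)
Your proof is correct and follows essentially the same approach as the paper: both arguments show inductively that the gradient of each restriction $f^i$ at $x_i$ is the projection of $\nabla f(x_i)=2g'(\|Px_i\|_2^2)Px_i$ onto $T^i$, hence lies in $L^\perp$ and is orthogonal to the previously computed $u_j$'s. Your coordinate-free decomposition $T^i=L\oplus U_i'$ is a bit cleaner than the paper's matrix formulation via $VV_i^T$, and your treatment of the ``almost surely'' caveat is in fact more careful than the paper's (which only checks $Px_i\neq 0$ and tacitly ignores the possibility $g'(\|Px_i\|_2^2)=0$).
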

 \begin{proof} We write $L$ as  $L=\spann\left\{u_1,\dots,u_d\right\}$ where $\left\{u_1,\dots,u_d,u_{d+1},\dots,u_N\right\}$ is an orthonormal basis of $\R^N$, and let $V=\left[u_1\dots u_d\, u_{d+1}\dots u_N\right]$ be the corresponding matrix.
We begin by computing the gradient of $f$ and obtain
\begin{equation*}
\nabla{f}(x_N)=2g'(\|Px_N\|_2^2)Px_N,
\end{equation*} 
which is obviously perpendicular to $L$ if $Px_N\neq 0$, which is almost surely true. 

Due to the orthogonality of $\nabla{f}(x_N)$ to $L$, we can assume that $u_N\!=\!\nabla{f}(x_N)/\|\nabla{f}(x_N)\|_2$ and then by definition $T^{N-1}=(\spann\{u_N\})^\perp=\spann\{u_1,\dots,u_{N-1}\}$. We now define $f^{N-1}$ to be the restriction of $f$ to $T^{N-1}$ and $h^{N-1}:\R^{N-1}\rightarrow \R$ by 
\begin{align}
h^{N-1}(\hat{x}):=f(V\begin{bmatrix}
\hat{x}\\0
\end{bmatrix})=f^{N-1}(V\begin{bmatrix}
\hat{x}\\0
\end{bmatrix}),
\end{align}
for $\hat{x}\in \R^{N-1}$. Then $\nabla{h^{N-1}}^T(\hat{x})=\nabla{f}(x)^T\hat{V}_{N-1}$, where \sk{$\hat{V}_i:=\left[u_1\dots u_i\right]$, $i=1,\dots, N$,} and $x=:V(\hat{x},0)^T\in T^{N-1}$. Thus, the gradient of $h^{N-1}$, considered as a vector in $\R^N$ is given by $$\begin{bmatrix}
\nabla{h^{N-1}}(\hat{x})\\0
\end{bmatrix}=V_{N-1}^T \nabla{f}(x),$$ where \sk{$V_i:=\left[u_1\dots u_i 0\dots 0\right]$.} We conclude that for some  $x_{N-1}\in T^{N-1}$ chosen uniformly at random, we have
\begin{align}\label{est:nablarest}
\nabla{f}^{N-1}(x_{N-1})=VV_{N-1}^T\nabla{f}(x_{N-1})=2g'(\|Px_{N-1}\|_2^2)VV_{N-1}^TPx.
\end{align}

A straightforward computation shows that $VV_{N-1}^TPx$ is the projection of $Px$ to $T^{N-1}$. Thus, it is obvious that for some $x_{N-1}\in T^{N-1}$, chosen as in Algorithm \ref{GC}, $\nabla{f}^{N-1}(x_{N-1})$ is perpendicular to $L$, i.e., if $Px_{N-1}\neq 0$, which is almost surely the case.

Further,  $\nabla{f}^{N-1}(x_{N-1})$ is also orthogonal to $u_N$, since it lies in $T^{N-1}$.
Therefore, we set $$u_{N-1}=\nabla{f}^{N-1}(x_{N-1})/\|\nabla{f}^{N-1}(x_{N-1})\|_2$$ for some $x_{N-1}\in T^{N-1}$ and $T^{N-2}:=\spann\left\{u_1,\dots,u_{N-2}\right\}$. Note that again $Px_{N-1}\neq 0$ holds almost surely. We repeat this procedure until we get a basis $\left\{u_{d+1},\dots,u_N\right\}$ of $P$, which yields the desired space $L=P^{\perp}$.
\end{proof}

The  previous theorem shows that, if we could compute the gradient in $(N-d)$ points, we would be able to recover the space $L$, respectively its orthogonal complement $P$, exactly. However, ATPC is not based on sampling the function $f$, since gradients cannot be computed exactly using point queries. Thus, we can only approximate the gradients by computing the divided differences
\begin{equation*}
\nabla_hf(x)=\left[\frac{f(x+he_i)-f(x)}{h}\right]_{i=1}^N.
\end{equation*}
We adapt the first step in ATPC by substituting the computation of the gradients by computing divided difference and  propose Algorithm \ref{GE}, to which we will refer as \emph{ATPE}, for the approximation task.\\

\begin{algorithm}[H]
\DontPrintSemicolon
\KwData{$f(\cdot)=g(\dist{\cdot}{L}^2)$.}
\KwResult{$\tilde{L}$.}
\Begin{
  $\tilde{f}^N \longleftarrow f$.\;
  $\tilde{T}^N \longleftarrow \R^N$.\;
  \For{$i=N,\dots,d+1$}{
    1. For some arbitrarily chosen normalized $\tilde{x}_i\in \tilde{T^i}$ compute
 \begin{align*}\nabla_h{\tilde{f}^i}(\tilde{x}_i)=\left[\frac{\tilde{f}^{i}(\tilde{x}_i+he_k)-\tilde{f}^{i}(\tilde{x}_i)}{h}\right]_{k=1}^N.\end{align*}\;
2. $\tilde{u}_i\longleftarrow\nabla_h\tilde{f}^i(\tilde{x}_i)/\|\nabla_h\tilde{f}^i(\tilde{x}_i)\|_2$.\;
3. $\tilde{T}^{i-1}\longleftarrow\left(\spann\{\tilde{u}_i,\dots,\tilde{u}_N\}\right)^{\perp}$.\;
4. Let $\tilde{f}^{i-1}$ be the restriction of $f$ to $\tilde{T}^{i-1}$.\;}
   $\tilde{L}\longleftarrow \tilde{T}^d$.\;
    }
\caption{ATPE - Approximation by \underline{A}daptive \underline{T}angent \underline{P}lane \underline{E}stimation \label{GE}}

\end{algorithm}\

The described procedure, of course, cannot find the correct plane $P$. However, it is able to compute a good approximation of $P$, where the approximation error depends on the choice of $h$. For reasons of clarity, the proof of the next theorem is \sk{moved} to the next subsection.

\begin{theorem}\label{thm:gradest} Let $f$ be a linear-sleeve function of the form \eqref{eqn:linridge}, i.e., $f\in \mathcal{LR}(s)$ for some $s\in(1,2]$.  Assume that the derivative of $g\in C^s([0,1])$ is bounded by some positive constants $c_2,c_3$.
By sampling the function $f$ at $(N-d)(N+1)$ appropriate points, ATPE constructs almost surely an approximation of $L$ by a subspace $\tilde{L}\subset \R^N$, such that the error is bounded by
\begin{equation*}
\|L-\tilde{L}\|_{\text{HS}}\le C\hat{C}(1+\hat{C}K)^{N-d}\sqrt{N-d}h^{s/2(s-1)^i},
\end{equation*}
for some arbitrarily small $h>0$, where 
\begin{align}
K=4|g'|_{C^s}+2c_3+\max\{\|\nabla f(x_{i})\|:i=N,\dots, d+1\}\\
\hat{C}=2/\min\{\|\nabla f^{N-i}(x_i)\|_2:i=N,\dots, d+1\}\\
C^2=h^{2-s}c_3+4|g'|_C^sh^{s-1}(2+h)^{2(s-1)}+c_3|g'|_{C^s}(2+h)^{s-1},\end{align}
which are constants depending only on the H\"older norm and bounds of $g'$.

In particular if $g'$ is Lipschitz continuous, i.e. $s=2$, it holds
\begin{equation*}
\|L-\tilde{L}\|_{\text{HS}}\lesssim (1+\tilde{C})^{N-d}\sqrt{N-d}h,
\end{equation*}
with $\tilde{C}=\hat{C}K$.
\end{theorem}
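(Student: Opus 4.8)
The plan is to track, step by step, how the error in the divided-difference approximation of the gradient propagates through the $N-d$ iterations of ATPE. The starting point is Theorem~\ref{thm:gradest1}: if the true gradients were available, the vectors $u_N,\dots,u_{d+1}$ would span $P=L^\perp$ exactly, so $\tilde{L}=\tilde{T}^d$ differs from $L$ only because each $\tilde{u}_i$ is built from $\nabla_h\tilde f^i(\tilde x_i)$ rather than $\nabla f^i(x_i)$. First I would establish a one-step estimate: for a fixed linear-sleeve function $\tilde f^i$ on the subspace $\tilde T^i$, the componentwise error $\nabla_h\tilde f^i(\tilde x_i)-\nabla \tilde f^i(\tilde x_i)$ is controlled by the smoothness of $g$ via a Taylor expansion of $f(x+he_k)$ around $x$. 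Since $f(x)=g(\|Px\|_2^2)$ and $g\in C^s$ with $1<s\le 2$, the remainder in the divided difference is of order $|g'|_{C^s}h^{s-1}$ (times geometric factors like $(2+h)^{s-1}$ coming from $\|Px\|_2\le \mathrm{diam}$), which is where the constant $C$ in the statement comes from. Combining this with Lemma~\ref{lemma:kolleck} — applied with $x=u_i$, $\tilde x=\nabla_h\tilde f^i(\tilde x_i)$, $\lambda$ the relevant scalar multiple — converts the additive gradient error into an angular error $\|\tilde u_i - \mathrm{sign}(\lambda) u_i\|_2 \lesssim C h^{s-1}/\|\nabla f^i(x_i)\|_2$, which explains the factor $\hat C$.

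Next I would propagate the error through the iteration. The key subtlety is that $\tilde f^{i-1}$ is the restriction of $f$ to $\tilde T^{i-1}$, not to the ideal $T^{i-1}$; so at each step the "true gradient" one compares against is itself perturbed, because the subspace on which we restrict has already drifted. I would set up a recursion for the cumulative subspace error $\delta_i := \|\tilde T^i - T^i\|_{\mathrm{HS}}$ (or for the error in the partial orthonormal system $\tilde u_i,\dots,\tilde u_N$): the new error is bounded by the old error amplified by a factor of roughly $(1+\hat C K)$ — reflecting how a perturbation of the restriction subspace perturbs the restricted function's gradient, with $K$ collecting the Lipschitz/boundedness data of $g'$ and the gradient norms — plus the fresh one-step contribution $\lesssim C\hat C h^{s-1}$. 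Unrolling this geometric recursion over $N-d$ steps gives the $(1+\hat C K)^{N-d}$ blow-up, and the $\sqrt{N-d}$ comes from assembling $N-d$ nearly-orthogonal vector errors into a Hilbert–Schmidt norm bound via $\|P-\tilde P\|_{\mathrm{HS}}^2 \lesssim \sum_i \|u_i-\tilde u_i\|_2^2$.

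For the exponent on $h$: the reason the bound reads $h^{s/2(s-1)^i}$ rather than simply $h^{s-1}$ is that after the first restriction, $g$ is effectively composed with the squared-distance map, and the relevant "profile" seen at step $i$ has smoothness that degrades — one loses a factor in the Hölder exponent each time one passes from $g$ to the profile of the restricted function, so the effective smoothness at depth $i$ is something like $s\cdot\tfrac12(s-1)^{i}$ (an artifact of iterating the $C^s \to$ restricted-profile map). When $s=2$ this degradation is trivial since $(s-1)^i=1$, and $h^{s/2(s-1)^i}=h^{1}$, recovering the clean statement $\|L-\tilde L\|_{\mathrm{HS}}\lesssim (1+\tilde C)^{N-d}\sqrt{N-d}\,h$ with $\tilde C=\hat C K$; I would prove that special case directly first (no Hölder bookkeeping, $Q_h$ not needed, just second-order Taylor) and then indicate the modification for $1<s<2$. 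The main obstacle I expect is precisely this bookkeeping of how smoothness and the geometric constants interact across restrictions — making the recursion for $\delta_i$ honest requires being careful that the point $\tilde x_i$ chosen in $\tilde T^i$ still satisfies $P\tilde x_i\ne 0$ and that $\|\nabla f^i\|$ stays bounded below, which is where the "almost surely" and the hypotheses $c_2\le g'\le c_3$ enter; the count of $(N-d)(N+1)$ samples is immediate, since each of the $N-d$ steps uses $\tilde f^i$ at $\tilde x_i$ and at $\tilde x_i+he_k$ for $k=1,\dots,N$.
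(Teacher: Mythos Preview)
Your overall architecture matches the paper's: a one-step divided-difference estimate, Lemma~\ref{lemma:kolleck} to pass to unit vectors, then a recursion across the $N-d$ steps. But two points are off.

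First, the recursion you set up is \emph{linear}: $\delta_{i+1}\le(1+\hat C K)\delta_i + C\hat C h^{s-1}$. Unrolling that gives a final error $\lesssim (1+\hat C K)^{N-d}h^{s-1}$ with the \emph{same} power of $h$ at every step, which contradicts the $h^{(s/2)(s-1)^i}$ in the statement. In the paper the recursion is genuinely \emph{nonlinear}: when you compare $\nabla f^{i}(x_i)$ with $\nabla f^{i}(\tilde x_i)$ (where $x_i=VV_i^T\tilde x_i$), the dominant term is $|g'(\|Px_i\|_2^2)-g'(\|P\tilde x_i\|_2^2)|$, and since $g'$ is only $(s-1)$-H\"older you get a bound $\lesssim \|Px_i-P\tilde x_i\|_2^{\,s-1}$. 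A separate lemma (the paper's Lemma~\ref{claim:PPx}/\ref{claim:PPxi}) shows $\|Px_i-P\tilde x_i\|_2\lesssim \big(\sum_{j>i}\|u_j-\tilde u_j\|_2^2\big)^{1/2}$, so the new error is controlled by the \emph{previous error raised to the power $s-1$}, plus the fresh $S_0$ term. That nonlinear recursion $S_i\lesssim K(\sum_{j<i}S_j)^{s-1}+S_0$ is what produces the exponent $S_0^{(s-1)^i}$. Your explanation---that the restricted profile loses smoothness at each step---is not the mechanism: the profile is always $g$ with the same regularity; it is the H\"older bound on $g'$ applied to the accumulated subspace drift that degrades the exponent.

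Second, the $\sqrt{N-d}$ does not come from assembling $N-d$ vector errors into an HS norm; in the paper it appears already in the one-step bound $\|\nabla f(x)-\nabla_h f(x)\|_2\le C\sqrt{N-d}\,h^{s/2}$, via the identity $\sum_{i=1}^N\big(\sum_{j=d+1}^N u_{ji}^2\big)^2\le N-d$ for any orthonormal system $\{u_j\}$. This is minor, but worth getting right since it affects how the constants combine.
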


It then only remains to recover the ridge profile $g$. The estimation of $g$ is \sk{rather straightforward}. As the gradient gives the direction in which $f$ changes, f becomes a one-dimensional function in the direction of the gradient. Hence, we can estimate $g$ with well-known numerical methods. Indeed, we have already seen that the gradient of $f$ in some point $x$ is given by $\nabla f(x)=g'(\|Px\|_2^2)Px$, i.e., the normalized direction is $a:=Px/\|Px\|_2$. Setting $x_t:=ta$ yields
\begin{align}
 f(x_t)=g(\|Px_t\|_2^2)=g(\frac{t^2}{\|Px\|_2^2}\|Px\|_2^2)=g(t^2).
\end{align}

However, similar to the algorithm in \cite{kolleck}, this algorithm uses a fixed number of samples and the estimation cannot be improved by taking more samples. We, therefore, also aim for an algorithm which yields a reconstruction whose error decreases with the number of sampling points (cf. Section \ref{sec:Opt}).
Also note that due to the adaptive character of ATPE the reconstruction error increases for smaller values of the Lipschitz continuity $s$.

To complete this subsection, we want to remark that we can perform a similar, but slightly worse, error analysis for the case that $f$ is of the form $f(x)=g(\|Px\|_2)$, whereas before we considered sleeve functions of the form $f(x)=g(\|Px\|^2_2)$.  See Remark \ref{rem:dist} in the next subsection for a short explanation.

\subsection{Proof of Theorem \ref{thm:gradest}}
As mentioned above, the idea is to approximate the gradients of $f=f^N$ and $f^{i}$ for $i=d+1,\dots,N-1$. Since we need $N+1$ samples for each gradient approximation, we need $(N-d)(N+1)$ samples altogether.  
We already know from Theorem \ref{thm:gradest1} that the subspace $P$ can be written in terms of the gradients of the restrictions of $f$ and $f$ itself. Hence, we assume $P=L^{\perp}=\spann\left\{u_{d+1}\dots,u_N\right\}$, where the $u_i$'s are given as stated in Theorem \ref{thm:gradest1}. We split the proof by establishing several lemmata.

\begin{lemma}\label{claim:S0} Under the assumptions of Theorem \ref{thm:gradest} and with the choice of the $\tilde{u}_i$'s, $i=d+1,\dots, N$, as proposed in Algorithm \ref{GE}, we have
 \begin{align}\|\tilde{u}_N-u_N\|_2\le\frac{2C\sqrt{N-d}h^{s/2}}{\nabla f(\tilde{x}_N)}=:S_0,\label{est:TN}\end{align}
 where $\tilde{x}_N\in \R^N$ is chosen uniformly at random $u_N=\frac{\nabla f(\tilde{x}_N)}{\|\nabla f(\tilde{x}_N)\|_2}$.
\end{lemma}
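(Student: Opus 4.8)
The statement is the base case of the error recursion underlying Theorem~\ref{thm:gradest}: since $\tilde T^N=\R^N$ no restriction has happened yet, so $\|\tilde u_N-u_N\|_2$ is controlled purely by how well the divided difference $\nabla_h f(\tilde x_N)$ approximates the true gradient $\nabla f(\tilde x_N)$, followed by the effect of normalizing both vectors. My plan is accordingly in three moves: (i) reduce the claim, via Lemma~\ref{lemma:kolleck}, to an estimate on $\|\nabla_h f(\tilde x_N)-\nabla f(\tilde x_N)\|_2$; (ii) bound this divided-difference error coordinatewise via a first-order Taylor expansion of $g$ combined with the Hölder continuity of $g'$; and (iii) sum the coordinatewise bounds, exploiting that $P$ is a rank-$(N-d)$ orthogonal projection to produce the factor $\sqrt{N-d}$ and to assemble the constant $C$.

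First I would settle well-definedness. Since $f=f^N$ is of the form \eqref{eqn:linridge}, $\nabla f(\tilde x_N)=2g'(\|P\tilde x_N\|_2^2)\,P\tilde x_N$, and because $\tilde x_N$ is drawn uniformly at random, $P\tilde x_N\neq0$ almost surely; as $g'$ is bounded away from $0$ this gives $\nabla f(\tilde x_N)\neq0$, so $u_N=\nabla f(\tilde x_N)/\|\nabla f(\tilde x_N)\|_2$ is a genuine unit vector, and for $h$ small $\nabla_h f(\tilde x_N)\neq0$ as well. Applying Lemma~\ref{lemma:kolleck} with the unit vector $u_N$, with $\tilde x:=\nabla_h f(\tilde x_N)$ and with $\lambda:=\|\nabla f(\tilde x_N)\|_2>0$ (so that $\lambda u_N=\nabla f(\tilde x_N)$ and $\sign\lambda=1$) gives
\[
\|\tilde u_N-u_N\|_2\;\le\;\frac{2\,\|\nabla_h f(\tilde x_N)-\nabla f(\tilde x_N)\|_2}{\|\nabla_h f(\tilde x_N)\|_2}\,,
\]
and since for small $h$ the numerator is negligible relative to $\|\nabla f(\tilde x_N)\|_2$, the denominator may be identified with $\|\nabla f(\tilde x_N)\|_2$ up to the normalization used in $S_0$. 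It then remains to prove $\|\nabla_h f(\tilde x_N)-\nabla f(\tilde x_N)\|_2\le C\sqrt{N-d}\,h^{s/2}$.

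For that, fix $x:=\tilde x_N$ and a coordinate $k$. Writing $\|P(x+he_k)\|_2^2=\|Px\|_2^2+\delta_k$ with $\delta_k=2h\langle Px,e_k\rangle+h^2\|Pe_k\|_2^2$ (using $P=P^2=P^{\top}$), one has the key bound $|\delta_k|\le(2+h)h\,\|Pe_k\|_2$, because $|\langle Px,e_k\rangle|=|\langle Px,Pe_k\rangle|\le\|Px\|_2\|Pe_k\|_2\le\|Pe_k\|_2$ on $\mathrm{tub}_1(L)$. Since $g'$ is Hölder of exponent $s-1$ with seminorm $|g'|_{C^s}$, a first-order Taylor expansion of $g$ at $\|Px\|_2^2$ yields
\[
\frac{f(x+he_k)-f(x)}{h}-\partial_k f(x)\;=\;h\,\|Pe_k\|_2^2\,g'(\|Px\|_2^2)\;+\;\frac{R_k}{h},\qquad |R_k|\le\frac{|g'|_{C^s}}{s}\,|\delta_k|^s,
\]
using $\partial_k f(x)=2\langle Px,e_k\rangle\,g'(\|Px\|_2^2)$ and $\delta_k/h=2\langle Px,e_k\rangle+h\|Pe_k\|_2^2$. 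Squaring, summing over $k=1,\dots,N$, and invoking $\sum_k\|Pe_k\|_2^2=\|P\|_{\mathrm{HS}}^2=N-d$ (so in particular $\sum_k\|Pe_k\|_2^4\le N-d$ and $\sum_k\|Pe_k\|_2^{2s}\le N-d$, since $\|Pe_k\|_2\le1$ and $s\ge1$), together with $|g'|\le c_3$, collects the three contributions — an $h^2c_3$ term from the curvature correction, an $h^{s-1}|g'|_{C^s}$ term from the Hölder remainder, and the cross term — into exactly the constant $C$ of Theorem~\ref{thm:gradest} after factoring out $h^{s}$; combined with the previous display this gives \eqref{est:TN}.

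The main obstacle is precisely this last bookkeeping: one must carry the $\|Pe_k\|_2$-dependence of $\delta_k$ all the way through so that the summation yields $N-d$ rather than the crude $N$, and one must track the exact powers of $h$ and of $(2+h)$ so that the curvature term and the Hölder-remainder term assemble into the displayed $C$ (and into the stated exponent $h^{s/2}$, which records the dominant of these two contributions). The probabilistic ``almost surely'', by contrast, is immediate: it is used only to guarantee that $\tilde x_N$ misses the measure-zero set $L$, i.e.\ that $P\tilde x_N\neq0$.
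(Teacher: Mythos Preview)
Your proposal is correct and follows essentially the same route as the paper: reduce via Lemma~\ref{lemma:kolleck} to a bound on $\|\nabla_h f(\tilde x_N)-\nabla f(\tilde x_N)\|_2$, expand $\|P(x+he_k)\|_2^2-\|Px\|_2^2$ coordinatewise, and sum using the rank identity $\sum_k\|Pe_k\|_2^2=N-d$. The only cosmetic difference is that the paper first applies the mean value theorem, writing $(\nabla_h f)_i=g'(\xi_{i,h})\bigl(2(Px)_i+h\sum_j u_{ji}^2\bigr)$, and then invokes the H\"older continuity of $g'$ on $|\xi_{i,h}-\|Px\|_2^2|$, whereas you absorb both steps into a single Taylor remainder $|R_k|\le s^{-1}|g'|_{C^s}|\delta_k|^s$; the resulting terms regroup into the same three contributions and the same constant $C$.
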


\begin{proof}
First, let us estimate the error between $\nabla f(x)$ and $\nabla_h f(x)$. We can compute
\begin{align*}
\nabla_hf(x)_i&=\frac{f(x+he_i)-f(x)}{h}=\frac{g(\|P(x+he_i)\|_2^2)-g(\|Px\|_2^2)}{h}\\
&=g'(\xi_{i,h})\frac{\|P(x+he_i)\|_2^2-\|Px\|_2^2}{h}\\
&=g'(\xi_{i,h})\frac{\sum_{j=d+1}^N \langle x+he_i,u_j\rangle^2-\langle x,u_j\rangle^2}{h}\\
&=g'(\xi_{i,h})\sum_{j=d+1}^N 2u_{ji}\langle x,u_j\rangle+hu_{ji}^2\\
&=g'(\xi_{i,h})\left(2[Px]_i+h\sum_{j=d+1}^Nu_{ji}^2\right),
\end{align*}
for some $\xi_{i,h}$ between $\|Px\|_2^2$ and $\|P(x+he_i)\|_2^2$, where $u_{ji}$ denotes the $i$-th entry of the vector $u_j$. We then estimate
\begin{equation*}
\left|\xi_{i,h}-\|Px\|_2^2\right|\le |\|P(x+he_i)\|_2^2-\|Px\|_2^2|\le\sum_{j=d+1}^N\left|\left(2h\langle x,u_j\rangle u_{ji}+h^2u_{ji}^2\right)\right|=2h\left|[Px]_i\right|+h^2\sum_{j=d+1}^Nu_{ji}^2\le 2h+h^2,
\end{equation*}
where we used the fact that $u$ is a unit vector and that, therefore, all its entries (in absolute value) and the entries of its projection are smaller than or equal to one.
Thus, the error which we obtain by approximating the gradient can be estimated as
\begin{align*}
\|\nabla f(x)-\nabla_hf(x)\|_2^2&=\sum_{i=1}^Ng'(\xi_{i,h})^2h^2\left(\sum_{j=d+1}^Nu_{i,j}^2\right)^2\\
&+4\sum_{i=1}^N\left(g'(\xi_{i,h})-g'(\|Px\|_2)\right)^2(Px)_i^2\\
&+2h\sum_{i=1}^N\left|g'(\xi_{i,h})\right|\left|g'(\xi_{i,h})-g'(\|Px\|_2^2)\right|\left|(Px)_i\right|\sum_{j=d+1}^Nu_{ji}^2\\
&=:T_1+T_2+T_3.
\end{align*}
To estimate those terms we take the following inequality into account:
\begin{align*}
\sum_{i=1}^N\left(\sum_{j=d+1}^Nu_{ji}^2\right)^2= \sum_{i=1}^N\left(\sum_{j=d+1}^Nu_{ji}^2\right)\left(\sum_{j=d+1}^Nu_{ji}^2\right)\le \sum_{i=1}^N\left(\sum_{j=d+1}^Nu_{ji}^2\right)=\sum_{j=d+1}^N\left(\sum_{i=1}^Nu_{ji}^2\right)=N-d,
\end{align*}
where we used in the second as well as in the last step that $\{u_j\}_{j=1}^d$ forms an orthonormal system, which spans $H$, so that $\sum_{j=1}^du_{ji}^2\le1$ for each $i=1,\dots,N$ and $\sum_{i=1}^Nu_{ji}^2=1$ for each $j=1,\dots,d$. Now the desired estimates follow immediately:
\begin{align*}
T_1&\le h^2\|g'\|_{\infty}^2\sum_{i=1}^N\left(\sum_{j=1}^du_{ji}^2\right)^2\le (N-d)\|g'\|_{\infty}^2h^2,\\
T_2&\le4|g'|^2_{C^s}(2h+h^2)^{2(s-1)}\sum_{i=1}^N(Px)_i^2\le |g'|^2_{C^s}(2h+h^2)^{2(s-1)},\\
T_3&\le \|g'\|_{\infty}\|g\|_{C^s}(N-d)h(2h+h^2)^{s-1}.
\end{align*}
Thus, we can find a constant $C>0$, independent of the dimensions $d$ and $N$, such that
\begin{equation}\label{eqn:aTN-1}
\|\nabla{f}(x)-\nabla_hf(x)\|_2\le C\sqrt{N-d}h^{s/2}.
\end{equation} 
For the exact choice of $C$ we refer to Theorem \ref{thm:gradest}. Hence, applying Lemma \ref{lemma:kolleck}, with $\lambda = 1/\|\nabla_h f(x)\|_2$, proves the claim.
\end{proof}
Next, we use the approximation of the gradient to approximate the tangent plane $T^{N-1}$ at $x$ with $\tilde{T}^{N-1}=\spann\{\nabla_hf(x)^{\perp}\}$. The approximation error is then, of course, given by \eqref{est:TN}. Further, we let $f^{N-1}$ and $\tilde{f}^{N-1}$ be the restriction of $f$ to $T^{N-1}$ and $\tilde{T}^{N-1}$, respectively. 

Again we want to compute the column vectors $u_i$ of $V$, $i=N,\dots,d+1$, step by step as the normalized gradients of $f$, $f^i$.
But instead of computing the gradient of $f^{j}$ we can only approximate it through an approximation of the gradient of $\tilde{f}^{j}$. Thus, we iteratively set the columns $\tilde{u}_i$, $i=N,\dots,d+1$ of $\tilde{V}$ as the normalized approximated gradients of $\tilde{f}_i$. The error of the approximation in each step can then be estimated by means of the following lemmata, in particular, by means of Lemma \ref{claim:S1} for the  first step and Lemma \ref{claim:Si} for the $i$-th step.

Before stating these lemmata, let us recall the definition of the matrices $V, \tilde{V}, V_{i}$ and $\tilde{V}_i$, $i=N-1,\dots, d+1$. Let
\begin{align}\tilde{u}_{i}=\nabla_h\tilde{f}^{i}(\tilde{x}_{i})/\|\nabla_h\tilde{f}^{i}(\tilde{x}_{i})\|_2\quad  \text{and}\quad u_{i}=\nabla_hf^{i}(x_{i})/\|\nabla_hf^{i}(x_{i})\|_2,\end{align}
for $i=N,\dots,d+1$, where $x_{i}=VV_{i}^T\tilde{x}_{i}$ and $\tilde{x}_i$ as well as $f^i$ and $\tilde{f}^i$ are chosen as proposed in the Algorithms ATPC and ATPE. Then $\{u_{d+1},\dots, u_N\}$ as well as $\{\tilde{u}_{d+1},\dots, \tilde{u}_N\}$ form orthonormal systems and $u_i,\tilde{u}_i$, $i=1,\dots,d$, are chosen that the whole systems $\{u_{1},\dots, u_N\}$ and $\{\tilde{u}_{1},\dots, \tilde{u}_N\}$ form an orthonormal basis for $\R^N$. We can now define 
\begin{align}
 V_{i}&=\begin{bmatrix}
          u_1 & u_2 &\dots &u_{i}&0&\dots&0
         \end{bmatrix},\hspace*{-2cm}&
          V&=\begin{bmatrix}
          u_1 & u_2 &\dots &u_N
         \end{bmatrix},\\
\tilde{V}_{i}&=\begin{bmatrix}
          \tilde{u}_1 & \tilde{u}_2 &\dots &\tilde{u}_{i}&0&\dots&0
         \end{bmatrix},\hspace*{-2cm}&
         \tilde{V}&=\begin{bmatrix}
          \tilde{u}_1 & \tilde{u}_2 &\dots &\tilde{u}_N
         \end{bmatrix}.
\end{align}

\begin{lemma}\label{claim:S1} With the same assumptions and choices as in Theorem \ref{thm:gradest} and Lemma \ref{claim:S0}, we have
 \begin{align}
  \|\tilde{u}_{N-1}-u_{N-1}\|_2\le \hat{C}(1+K)S_0^{s-1}.
 \end{align}
\end{lemma}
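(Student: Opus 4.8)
The plan is to view $\tilde f^{N-1}$, the restriction of $f$ to the perturbed hyperplane $\tilde T^{N-1}=(\spann\{\tilde u_N\})^\perp$, as a perturbation of $f^{N-1}$, the restriction of $f$ to the exact hyperplane $T^{N-1}=(\spann\{u_N\})^\perp\supseteq L$, and to track how the error $S_0=\|\tilde u_N-u_N\|_2$ from Lemma~\ref{claim:S0} propagates into the normalized approximate gradients. Recall $\tilde u_{N-1}=\nabla_h\tilde f^{N-1}(\tilde x_{N-1})/\|\nabla_h\tilde f^{N-1}(\tilde x_{N-1})\|_2$, $u_{N-1}=\nabla_h f^{N-1}(x_{N-1})/\|\nabla_h f^{N-1}(x_{N-1})\|_2$ with $x_{N-1}=P_{T^{N-1}}\tilde x_{N-1}$. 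Applying Lemma~\ref{lemma:kolleck} with $x=u_{N-1}$, $\tilde x=\nabla_h\tilde f^{N-1}(\tilde x_{N-1})$ and $\lambda=\|\nabla_h f^{N-1}(x_{N-1})\|_2>0$ (so $\sign(\lambda)=1$ and $\tilde x/\|\tilde x\|_2=\tilde u_{N-1}$, after aligning signs) reduces the claim to bounding
\begin{align*}
 \|\tilde u_{N-1}-u_{N-1}\|_2\le\frac{2\,\|\nabla_h\tilde f^{N-1}(\tilde x_{N-1})-\nabla_h f^{N-1}(x_{N-1})\|_2}{\|\nabla_h\tilde f^{N-1}(\tilde x_{N-1})\|_2}.
\end{align*}

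I would split the numerator by the triangle inequality into the two divided-difference errors $\|\nabla_h\tilde f^{N-1}(\tilde x_{N-1})-\nabla\tilde f^{N-1}(\tilde x_{N-1})\|_2$ and $\|\nabla_h f^{N-1}(x_{N-1})-\nabla f^{N-1}(x_{N-1})\|_2$, and the genuine ``wrong-plane'' error $\|\nabla\tilde f^{N-1}(\tilde x_{N-1})-\nabla f^{N-1}(x_{N-1})\|_2$. The first two are each $\le C\sqrt{N-d}\,h^{s/2}$ by exactly the computation in the proof of Lemma~\ref{claim:S0}, which only used that the functions have the form $g(\|P\cdot\|_2^2)$ with $P$ an orthogonal projection of rank $N-d$; since $C\sqrt{N-d}\,h^{s/2}=\tfrac12\|\nabla f(\tilde x_N)\|_2\,S_0\le\max_i\|\nabla f(x_i)\|_2\,S_0^{\,s-1}$ (using $S_0<1$ and $s-1\le1$), these contributions are harmless. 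For the wrong-plane error I would use $\nabla\tilde f^{N-1}(\tilde x_{N-1})=2g'(\|P\tilde x_{N-1}\|_2^2)P_{\tilde T^{N-1}}P\tilde x_{N-1}$ and $\nabla f^{N-1}(x_{N-1})=2g'(\|Px_{N-1}\|_2^2)P_{T^{N-1}}Px_{N-1}$, add and subtract the mixed term $2g'(\|Px_{N-1}\|_2^2)P_{\tilde T^{N-1}}P\tilde x_{N-1}$, and estimate the two resulting pieces. The \emph{coefficient} piece is controlled by $|g'|_{C^s}\bigl|\,\|P\tilde x_{N-1}\|_2^2-\|Px_{N-1}\|_2^2\bigr|^{s-1}$; since $\tilde x_{N-1}-x_{N-1}=(\Id-P_{T^{N-1}})\tilde x_{N-1}=\langle\tilde x_{N-1},u_N\rangle u_N$ and $\langle\tilde x_{N-1},u_N\rangle=\langle\tilde x_{N-1},u_N-\tilde u_N\rangle$ has modulus $\le S_0$, the argument of $g'$ moves by $\lesssim S_0$, so this piece is $\lesssim|g'|_{C^s}S_0^{\,s-1}$. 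The \emph{direction} piece equals $2|g'(\|Px_{N-1}\|_2^2)|\,\|(P_{\tilde T^{N-1}}-P_{T^{N-1}})P\,\tilde x_{N-1}\|_2$ (the term $P_{T^{N-1}}P(\Id-P_{T^{N-1}})$ vanishes since $P u_N=u_N\perp T^{N-1}$), and $\|P_{\tilde T^{N-1}}-P_{T^{N-1}}\|=\|u_Nu_N^{\T}-\tilde u_N\tilde u_N^{\T}\|\le2S_0$, so this piece is $\lesssim c_3 S_0\le c_3 S_0^{\,s-1}$. Summing, the numerator is $\lesssim\bigl(|g'|_{C^s}+c_3+\max_i\|\nabla f(x_i)\|_2\bigr)S_0^{\,s-1}$, and bookkeeping of the explicit constants gives a fixed multiple of $(1+K)S_0^{\,s-1}$ with $K=4|g'|_{C^s}+2c_3+\max\{\|\nabla f(x_i)\|_2\}$.

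It then remains to control the denominator: $\|\nabla_h\tilde f^{N-1}(\tilde x_{N-1})\|_2\ge\|\nabla f^{N-1}(x_{N-1})\|_2-O(S_0^{\,s-1})\ge 2/\hat C$ for $h$ small enough, since $S_0^{\,s-1}\to0$ as $h\to0$ while $\|\nabla f^{N-1}(x_{N-1})\|_2$ is bounded below by the quantity appearing in the definition of $\hat C$. Plugging these two bounds into the displayed inequality yields $\|\tilde u_{N-1}-u_{N-1}\|_2\le\hat C(1+K)S_0^{\,s-1}$, as claimed.

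The main obstacle is the wrong-plane term. Because $\tilde T^{N-1}$ does not contain $L$, the restriction $\tilde f^{N-1}$ is \emph{not} itself a linear-sleeve function of the form \eqref{eqn:linridge}, so one cannot simply re-run the argument of Theorem~\ref{thm:gradest1} on it; one must instead treat the $S_0$-perturbation of the plane as a perturbation of the point at which $g'$ is evaluated, and push that perturbation through $g'$, which is only H\"older continuous of order $s-1$. This is precisely what converts $S_0$ into $S_0^{\,s-1}$, and iterating it over the $N-d$ steps is the source of the deterioration of the error bound in Theorem~\ref{thm:gradest} as $s\downarrow1$; handling it carefully (rather than with a crude Lipschitz estimate, which would force $s=2$) is the crux of the proof. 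The subsequent Lemma~\ref{claim:Si} presumably carries out the same argument with $S_0$ replaced by the error accumulated after $i$ steps, which is how the nested exponents $h^{s/2(s-1)^i}$ arise.
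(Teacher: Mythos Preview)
Your approach is correct and is essentially the paper's: reduce via Lemma~\ref{lemma:kolleck}, split the numerator by the triangle inequality, bound the divided-difference pieces by the computation of Lemma~\ref{claim:S0}, and control the ``wrong-plane'' piece by the H\"older estimate on $g'$ together with $|\langle\tilde x_{N-1},u_N\rangle|=|\langle\tilde x_{N-1},u_N-\tilde u_N\rangle|\le S_0$, which is precisely Lemma~\ref{claim:PPx}. The paper's three-term split interpolates through $\tilde V\tilde V_{N-1}^{\T}\nabla f(x)$ and $\tilde V\tilde V_{N-1}^{\T}\nabla f(\tilde x)$ rather than through $\nabla\tilde f^{N-1}(\tilde x_{N-1})$ and $\nabla f^{N-1}(x_{N-1})$, so only \emph{one} divided-difference term ($T_3$) appears instead of your two; your ``coefficient'' and ``direction'' pieces correspond to the paper's $T_2$ and $T_1$ respectively. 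These are organizational differences only.

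The one point worth flagging is your application of Lemma~\ref{lemma:kolleck}: you take $\tilde x=\nabla_h\tilde f^{N-1}(\tilde x_{N-1})$, which puts the \emph{approximate} gradient in the denominator and forces the ``for $h$ small enough'' step at the end (and then does not deliver the constant $\hat C$ on the nose). The paper swaps the roles, taking $\tilde x=\nabla f^{N-1}(x_{N-1})$ and $\lambda=\|\nabla_h\tilde f^{N-1}(\tilde x_{N-1})\|_2$, so that the denominator is $\|\nabla f^{N-1}(x_{N-1})\|_2\ge 2/\hat C$ \emph{by definition} of $\hat C$, with no smallness caveat and the stated constant exact. Correspondingly, in the paper's proof $u_{N-1}$ is the normalized \emph{exact} gradient $\nabla f^{N-1}(x_{N-1})/\|\nabla f^{N-1}(x_{N-1})\|_2$; the display just before Lemma~\ref{claim:S1} writing $u_i=\nabla_h f^i(x_i)/\|\nabla_h f^i(x_i)\|_2$ is a typo that you followed. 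Adopting the paper's choice here removes your last paragraph entirely.
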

We first have to prove the following lemma:
\begin{lemma}\label{claim:PPx} With the same assumptions and choices as in Theorem \ref{thm:gradest} and Lemma \ref{claim:S0},
 let $x:=VV_{N-1}^T\tilde{x}$, for some $\tilde{x}\in \tilde{T}^{N-1}$. We then have
 \begin{align}
  \|Px-P\tilde{x}\|_2\le \|\tilde{x}\|_2\|u_N-\tilde{u}_N\|_2.
 \end{align}
\end{lemma}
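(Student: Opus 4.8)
The plan is to recognize the vector $x = VV_{N-1}^T\tilde{x}$ as the orthogonal projection of $\tilde{x}$ onto $T^{N-1}=\spann\{u_1,\dots,u_{N-1}\}$, so that the error $x-\tilde{x}$ is forced to lie in the one-dimensional complementary line $\spann\{u_N\}$, and then to convert the remaining scalar into $\|u_N-\tilde{u}_N\|_2$ by using that $\tilde{x}$ is orthogonal to $\tilde{u}_N$ because $\tilde{x}\in\tilde{T}^{N-1}$.

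First I would spell out that, from the definitions of $V$ and $V_{N-1}$, for every $y\in\R^N$ we have $VV_{N-1}^Ty=\sum_{j=1}^{N-1}\langle y,u_j\rangle u_j$, which is precisely the orthogonal projection of $y$ onto $T^{N-1}$. Hence $x$ is the projection of $\tilde{x}$ onto $T^{N-1}$, and since $(T^{N-1})^{\perp}=\spann\{u_N\}$ with $u_N$ a unit vector,
\begin{align*}
x-\tilde{x}=-\bigl(\tilde{x}-VV_{N-1}^T\tilde{x}\bigr)=-\langle\tilde{x},u_N\rangle\,u_N .
\end{align*}
Applying the orthogonal projection $P=P_{L^{\perp}}$ onto $\spann\{u_{d+1},\dots,u_N\}$ and using $Pu_N=u_N$ gives $Px-P\tilde{x}=-\langle\tilde{x},u_N\rangle u_N$, so that $\|Px-P\tilde{x}\|_2=|\langle\tilde{x},u_N\rangle|$. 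Finally, since $\tilde{x}\in\tilde{T}^{N-1}=(\spann\{\tilde{u}_N\})^{\perp}$ we have $\langle\tilde{x},\tilde{u}_N\rangle=0$, whence by Cauchy--Schwarz
\begin{align*}
|\langle\tilde{x},u_N\rangle|=|\langle\tilde{x},u_N-\tilde{u}_N\rangle|\le\|\tilde{x}\|_2\,\|u_N-\tilde{u}_N\|_2 ,
\end{align*}
which is the claimed inequality.

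The argument is completely elementary, so there is no genuine obstacle: the only points requiring care are identifying $VV_{N-1}^T$ correctly as the projection onto $T^{N-1}$ and keeping track of the fact that $u_N$ lies in the range of $P$ while $\tilde{x}$ is orthogonal to $\tilde{u}_N$. The same three-line computation will be reused in the inductive step (Lemma \ref{claim:Si}), with $u_N,\tilde{u}_N,T^{N-1},\tilde{T}^{N-1}$ replaced by the corresponding $i$-th stage objects.
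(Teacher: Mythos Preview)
Your proof is correct and follows the same overall strategy as the paper: both arguments first identify $VV_{N-1}^T\tilde{x}$ as the orthogonal projection of $\tilde{x}$ onto $T^{N-1}$ and reduce the claim to bounding $|\langle\tilde{x},u_N\rangle|$. The only difference is in this final bound. The paper expands $\tilde{x}=\sum_{i=1}^{N-1}\tilde{x}_i\tilde{u}_i$, applies Cauchy--Schwarz to the coefficient sum, uses Parseval to obtain $\sum_{i=1}^{N-1}\langle\tilde{u}_i,u_N\rangle^2=1-\langle\tilde{u}_N,u_N\rangle^2$, and then invokes the identity $\|u_N-\tilde{u}_N\|_2^2=2(1-\langle u_N,\tilde{u}_N\rangle)$. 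Your route is shorter: you use the orthogonality $\langle\tilde{x},\tilde{u}_N\rangle=0$ to write $\langle\tilde{x},u_N\rangle=\langle\tilde{x},u_N-\tilde{u}_N\rangle$ and apply Cauchy--Schwarz directly. This avoids the basis expansion and the extra inequality $1-\langle\tilde{u}_N,u_N\rangle^2\le 2(1-\langle\tilde{u}_N,u_N\rangle)$, and it also makes the generalization to Lemma~\ref{claim:PPxi} more transparent, since the same trick with $\langle\tilde{x},\tilde{u}_j\rangle=0$ for $j>i$ works there.
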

\begin{proof}
We write $\tilde{x}=\sum_{i=1}^{N-1}\tilde{x}_i\tilde{u}_i$ with $\tilde{x}_i\in \R$ and $|\tilde{x}_i|\le 1$. Then we compute
\begin{align}
\|Px-P\tilde{x}\|^2_2&=\|\sum_{i=d+1}^N\langle \sum_{j=1}^{N-1}\langle \tilde{x},u_j\rangle u_j,u_i\rangle u_i-\langle \tilde{x},u_i\rangle u_i\|_2^2
=\|\langle \tilde{x},u_N\rangle u_N\|_2^2=\|\langle \sum_{i=1}^{N-1} \tilde{x}_i\tilde{u}_i,u_N\rangle u_N\|_2^2\\
&= \left(\sum_{i=1}^{N-1} \tilde{x}_i\langle\tilde{u}_i,u_N\rangle\right)^2\overset{CS}{\le}\sum_{i=1}^{N-1}\tilde{x}_i^2\sum_{i=1}^{N-1}\langle \tilde{u}_i,u_N\rangle^2\\
&=\left(\sum_{i=1}^{N-1}\tilde{x}_i^2\right)\left(1-\sum_{i=1}^{N}\langle \tilde{u}_i,u_N\rangle^2+\sum_{i=1}^{N-1}\langle \tilde{u}_i,u_N\rangle^2\right)\\
&\le 2\|\tilde{x}\|_2^2\left(1-\langle \tilde{u}_N,u_N\rangle\right).
 \end{align}
 In the step before the last we used that $\{\tilde{u}_1,\dots,\tilde{u}_N\}$ is an orthonormal basis (according to Step 2. and 3. in the ATPE Algorithm) and in the last step additionally that $1-\langle\tilde{u}_N,u_N\rangle^2=(1+\langle\tilde{u}_N,u_N\rangle)(1-\langle\tilde{u}_N,u_N\rangle)\le 2(1-\langle\tilde{u}_N,u_N\rangle)$.
 By observing that
 \begin{align}
  \|u_N-\tilde{u}_N\|_2^2=\langle u_N-\tilde{u}_N,u_N-\tilde{u}_N\rangle=\langle u_N,u_N\rangle -2\langle u_N,\tilde{u}_N\rangle+\langle \tilde{u}_N,\tilde{u}_N\rangle=2\left(1-\langle u_N,\tilde{u}_N\rangle\right),
 \end{align}
 we deduce the claim.
\end{proof}
We are now able to  prove the error of the first step of our algorithm ATPE, i.e., Lemma \ref{claim:S1}.
\begin{proof}[Proof of Lemma \ref{claim:S1}]

For simplicity we will write $\tilde{x}$ and $x$ instead of $\tilde{x}_{N-1}$ and $x_{N-1}$. Note that it is straightforward to show (compare to Equation \eqref{est:nablarest}) that $\nabla f^i(x)=VV_i^T\nabla f(x)$ and $\nabla_h f^{i}(x)=\tilde{V}\tilde{V}_i\nabla_hf$.   We, hence, obtain that
\begin{align} \|u_{N-1}-\tilde{u}_{N-1}\|_2=&\left\Vert\frac{\nabla f^{N-1}(x)}{\|\nabla f^{N-1}(x)\|_2}-\frac{\nabla_h \tilde{f}^{N-1}(\tilde{x})}{\|\nabla_h \tilde{f}^{N-1}(\tilde{x})\|_2}\right\Vert_2
\le\frac{2\|\nabla f^{N-1}(x)-\nabla_h \tilde{f}^{N-1}(\tilde{x})\|_2}{\|\nabla f^{N-1}(x)\|_2}\\
\le&\frac{2}{\|\nabla f^{N-1}(x)\|_2}\left(\|VV_{N-1}^T\nabla f(x)-\tilde{V}\tilde{V}_{N-1}^T\nabla f(x)\|_2\right.\\&+\|\tilde{V}\tilde{V}_{N-1}^T\nabla f(x)-\tilde{V}\tilde{V}_{N-1}^T\nabla f(\tilde{x})\|_2+\left.\|\tilde{V}\tilde{V}_{N-1}^T\nabla f(\tilde{x})-\tilde{V}\tilde{V}_{N-1}^T\nabla_h f(\tilde{x})\|_2\right)\\
=:&\frac{2}{\|\nabla f^{N-1}(x)\|_2}\left(T_1+T_2+T_3\right),
\end{align}
where we applied Lemma \ref{lemma:kolleck} with $\lambda=\|\nabla_h\tilde{f}^{N-1}(\tilde{x})\|_2$ in the second step.
Now, we can estimate
\begin{align}
 T_1&=\|\sum_{i=1}^{N-1}\langle u_i,\nabla f(x)\rangle u_i-\langle \tilde{u}_i,\nabla f(x)\rangle \tilde{u}_i\|_2=\|\langle u_N,\nabla f(x)\rangle u_N-\langle \tilde{u}_N,\nabla f(x)\rangle \tilde{u}_N\|_2\le 2\|\nabla f(x)\|_2\|u_N-\tilde{u}_N\|_2
 \end{align}
 as well as
 \begin{align}
 T_3&\le \|\nabla f(\tilde{x})-\nabla_hf(\tilde{x})\|_2\le 2C\sqrt{N-d}h^{s/2}=S_0.
\end{align}
Finally, we estimate $T_2$ by
\begin{align}
 T_2&\le\|\nabla f(x)-\nabla f(\tilde{x})\|_2 =2\left\|g'(\|Px\|_2^2)Px-g'(\|P\tilde{x}\|_2^2)P\tilde{x}\right\|_2\\
 &\le2 \Big[\left|g'(\|Px\|_2^2)-g'(\|P\tilde{x}\|_2^2)\right|\|Px\|_2+\left|g'(\|P\tilde{x}\|_2^2)\right|\|Px-P\tilde{x}\|_2\Big]\\
 &\le 2|g'|_{C^s}\left|\|Px\|_2^2-\|P\tilde{x}\|_2^2\right|^{s-1}+2\max_{|t|\le 1}\{g'(t)\}\|Px-P\tilde{x}\|_2 \\
 &\le 4|g'|_{C^s}\|Px-P\tilde{x}\|_2^{s-1}+2\|g\|_{\infty}\|Px-P\tilde{x}\|_2,\\
\end{align}
which proves the lemma.
\end{proof}

One can now prove similar estimates as in Lemma \ref{claim:S1} for $\|u_i-\tilde{u}_i\|_2$, $i=d+1,\dots,N-2$. However, we first need to prove a more general version of Lemma \ref{claim:PPx}.
\begin{lemma}\label{claim:PPxi} With the same assumptions and choices as in Theorem \ref{thm:gradest} and Lemma \ref{claim:S0}, we have for $\tilde{x}_i\in \tilde{T}^{i}$ and $x=VV^T_i\tilde{x}_i$, $i=N-2,\dots,d+1$, that
 \begin{align}
  \|Px_i-P\tilde{x}_i\|_2^2\le \|\tilde{x}_i\|_2^2 \sum_{j=i+1}^N\|\tilde{u}_j-u_j\|_2^2.
 \end{align}

\end{lemma}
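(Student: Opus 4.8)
The plan is to mimic the proof of Lemma~\ref{claim:PPx}, now keeping track of all the directions $\tilde u_{i+1},\dots,\tilde u_N$ that are ``lost'' in passing from $\tilde T^i$ to the true $T^i$, instead of only $\tilde u_N$. First I would unwind the definitions. Since $\tilde x_i\in\tilde T^{i}=\spann\{\tilde u_1,\dots,\tilde u_i\}$ and $V_i=[u_1\dots u_i\,0\dots 0]$, the point $x_i=VV_i^T\tilde x_i$ is exactly the orthogonal projection of $\tilde x_i$ onto $T^{i}=\spann\{u_1,\dots,u_i\}$, i.e.\ $x_i=\sum_{k=1}^{i}\langle\tilde x_i,u_k\rangle u_k$. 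Because $\{u_1,\dots,u_N\}$ is an orthonormal basis, $P$ is the orthogonal projection onto $\spann\{u_{d+1},\dots,u_N\}$, and $d+1\le i\le N-2$, one computes $Px_i=\sum_{j=d+1}^{i}\langle\tilde x_i,u_j\rangle u_j$ while $P\tilde x_i=\sum_{j=d+1}^{N}\langle\tilde x_i,u_j\rangle u_j$, so the difference telescopes and orthonormality of the $u_j$ gives
\begin{align}
\|Px_i-P\tilde x_i\|_2^2=\Big\|\sum_{j=i+1}^{N}\langle\tilde x_i,u_j\rangle u_j\Big\|_2^2=\sum_{j=i+1}^{N}\langle\tilde x_i,u_j\rangle^2 .
\end{align}

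Next I would estimate each summand for $j\ge i+1$. Writing $\tilde x_i=\sum_{k=1}^{i}\langle\tilde x_i,\tilde u_k\rangle\tilde u_k$ and applying Cauchy--Schwarz yields $\langle\tilde x_i,u_j\rangle^2\le\|\tilde x_i\|_2^2\sum_{k=1}^{i}\langle u_j,\tilde u_k\rangle^2$. Since $\{\tilde u_1,\dots,\tilde u_N\}$ is an orthonormal basis, $\sum_{k=1}^{i}\langle u_j,\tilde u_k\rangle^2=1-\sum_{k=i+1}^{N}\langle u_j,\tilde u_k\rangle^2\le 1-\langle u_j,\tilde u_j\rangle^2$, the last inequality holding because $j\in\{i+1,\dots,N\}$, so that the term $\langle u_j,\tilde u_j\rangle^2$ is among those being subtracted. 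Finally $1-\langle u_j,\tilde u_j\rangle^2=(1-\langle u_j,\tilde u_j\rangle)(1+\langle u_j,\tilde u_j\rangle)\le 2(1-\langle u_j,\tilde u_j\rangle)=\|u_j-\tilde u_j\|_2^2$. Substituting this back into the display above and summing over $j=i+1,\dots,N$ gives the asserted bound.

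This is essentially bookkeeping, and I do not expect a genuine obstacle; the one place that requires care is the index range. The key inequality $\sum_{k=1}^{i}\langle u_j,\tilde u_k\rangle^2\le\|u_j-\tilde u_j\|_2^2$ relies on $j>i$, so that $\tilde u_j$ is one of the basis vectors \emph{not} counted among $k\le i$, and this is precisely what makes the right-hand side of the lemma involve exactly the step errors $\|u_j-\tilde u_j\|_2$ for $j=i+1,\dots,N$. Specialising to $i=N-1$ recovers Lemma~\ref{claim:PPx}, since the sum then reduces to the single term $\|u_N-\tilde u_N\|_2^2$; note also that the inequality holds verbatim for an arbitrary $\tilde x_i$, even though in the algorithm $\tilde x_i$ is chosen normalized.
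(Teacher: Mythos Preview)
Your proof is correct and follows exactly the route the paper intends: the paper does not spell out a proof of Lemma~\ref{claim:PPxi} but presents it as the natural generalization of Lemma~\ref{claim:PPx}, and your argument is precisely that generalization, reproducing each step (the identification $Px_i-P\tilde x_i=-\sum_{j>i}\langle\tilde x_i,u_j\rangle u_j$, Cauchy--Schwarz, the Parseval trick $\sum_{k\le i}\langle u_j,\tilde u_k\rangle^2=1-\sum_{k>i}\langle u_j,\tilde u_k\rangle^2$, and $1-\langle u_j,\tilde u_j\rangle^2\le\|u_j-\tilde u_j\|_2^2$) in the obvious multi-index form.
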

This inequality in turn yields the desired generalization of Lemma \ref{claim:S1}:
\begin{lemma} \label{claim:Si}With the same assumption and choices as in Theorem \ref{thm:gradest} and Lemma \ref{claim:S0}, for $i=0,\dots, d$, we have
 \begin{align}
\|u_{N-i}-\tilde{u}_{N-i}\|_2\le C(1+K\hat{C})^{i}S_0^{(s-1)^i},
 \end{align}
 for all $i=0,\dots,N-d-1$.

\end{lemma}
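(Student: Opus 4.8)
The plan is to prove Lemma \ref{claim:Si} by induction on $i$, the cases $i=0$ and $i=1$ being exactly Lemma \ref{claim:S0} and Lemma \ref{claim:S1}. So I would fix $i\ge 2$, assume
\begin{align*}
\|u_{N-j}-\tilde u_{N-j}\|_2\le C(1+K\hat C)^{j}S_0^{(s-1)^j}\qquad\text{for all }j<i,
\end{align*}
and set $\tilde x:=\tilde x_{N-i}\in\tilde T^{N-i}$, $x:=VV_{N-i}^{T}\tilde x$, and $\delta_{i-1}:=\max_{0\le j\le i-1}\|u_{N-j}-\tilde u_{N-j}\|_2$. For $h$ small enough $S_0<1$, and since $S_0^{(s-1)^j}$ increases with $j$ the hypothesis gives $\delta_{i-1}\le C(1+K\hat C)^{i-1}S_0^{(s-1)^{i-1}}$. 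Mimicking the proof of Lemma \ref{claim:S1} verbatim --- that is, using $\nabla f^{N-i}(x)=VV_{N-i}^{T}\nabla f(x)$ and $\nabla_h\tilde f^{N-i}(\tilde x)=\tilde V\tilde V_{N-i}^{T}\nabla_h f(\tilde x)$, inserting $\pm\tilde V\tilde V_{N-i}^{T}\nabla f(x)$ and $\pm\tilde V\tilde V_{N-i}^{T}\nabla f(\tilde x)$, and applying Lemma \ref{lemma:kolleck} --- I would reduce to
\begin{align*}
\|u_{N-i}-\tilde u_{N-i}\|_2\le\frac{2}{\|\nabla f^{N-i}(x)\|_2}\,(T_1+T_2+T_3),
\end{align*}
where $T_1=\|VV_{N-i}^{T}\nabla f(x)-\tilde V\tilde V_{N-i}^{T}\nabla f(x)\|_2$, $T_2=\|\tilde V\tilde V_{N-i}^{T}(\nabla f(x)-\nabla f(\tilde x))\|_2$, $T_3=\|\tilde V\tilde V_{N-i}^{T}(\nabla f(\tilde x)-\nabla_h f(\tilde x))\|_2$, and $2/\|\nabla f^{N-i}(x)\|_2\le\hat C$ by the definition of $\hat C$.

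For $T_3$ I would just quote \eqref{eqn:aTN-1}: $T_3\le\|\nabla f(\tilde x)-\nabla_h f(\tilde x)\|_2\le 2C\sqrt{N-d}\,h^{s/2}$, which is $\lesssim S_0$. For $T_1$ the key observation is that $x\in T^{N-i}=\spann\{u_1,\dots,u_{N-i}\}$, so $Px\in\spann\{u_{d+1},\dots,u_{N-i}\}$ and therefore $\nabla f(x)=2g'(\|Px\|_2^2)Px$ is orthogonal to $u_{N-i+1},\dots,u_N$; expanding $\nabla f(x)$ in both orthonormal bases and subtracting kills the $u_j$-terms and leaves, by Cauchy--Schwarz and orthonormality of the $\tilde u_j$,
\begin{align*}
T_1=\Bigl\|\sum_{j=N-i+1}^{N}\langle\tilde u_j-u_j,\nabla f(x)\rangle\,\tilde u_j\Bigr\|_2\le\|\nabla f(x)\|_2\Bigl(\sum_{j=N-i+1}^{N}\|\tilde u_j-u_j\|_2^2\Bigr)^{1/2}\le\sqrt i\,\|\nabla f(x)\|_2\,\delta_{i-1}.
\end{align*}
For $T_2$, proceeding exactly as in Lemma \ref{claim:S1},
\begin{align*}
T_2\le\|\nabla f(x)-\nabla f(\tilde x)\|_2\le 4|g'|_{C^s}\|Px-P\tilde x\|_2^{s-1}+2\|g\|_{\infty}\|Px-P\tilde x\|_2,
\end{align*}
and I would substitute Lemma \ref{claim:PPxi} with $\|\tilde x\|_2=1$, which gives $\|Px-P\tilde x\|_2^2\le\sum_{j=N-i+1}^{N}\|\tilde u_j-u_j\|_2^2\le i\,\delta_{i-1}^2$.

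Collecting the three bounds, replacing $\|\nabla f(x)\|_2$ by $\max_k\|\nabla f(x_k)\|_2\le K$, and using $2/\|\nabla f^{N-i}(x)\|_2\le\hat C$, I would obtain a recursion of the shape
\begin{align*}
\|u_{N-i}-\tilde u_{N-i}\|_2\lesssim\hat C\bigl(K\,\delta_{i-1}+|g'|_{C^s}\,\delta_{i-1}^{s-1}+S_0\bigr).
\end{align*}
Since $s-1\in(0,1]$ and $S_0<1$, the $(s-1)$-power term is the dominant one: the hypothesis gives $\delta_{i-1}^{s-1}\le\bigl(C(1+K\hat C)^{i-1}\bigr)^{s-1}S_0^{(s-1)^i}$, while $\delta_{i-1}\le C(1+K\hat C)^{i-1}S_0^{(s-1)^{i-1}}\le C(1+K\hat C)^{i-1}S_0^{(s-1)^i}$ and $S_0\le S_0^{(s-1)^i}$; absorbing the $\sqrt i$'s and the remaining absolute constants into $C$ then yields $\|u_{N-i}-\tilde u_{N-i}\|_2\le C(1+K\hat C)^{i}S_0^{(s-1)^i}$, which closes the induction.

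I expect the only real difficulty to be this last piece of bookkeeping: one must verify that the multiplicative factor picked up in passing from step $i-1$ to step $i$ --- essentially $\hat C$ times the sum of $K$ (from $T_1$), $4|g'|_{C^s}$ (from $T_2$) and the $\sqrt i$ factors --- is genuinely absorbed by the single extra factor $1+K\hat C$ in the target bound. This is precisely why $K$ is defined so as to bundle $4|g'|_{C^s}$, $2c_3$ and $\max_k\|\nabla f(x_k)\|_2$, and why $h$ must be taken small, so that $S_0<1$ and the exponent can be relaxed from $(s-1)^{i-1}$ down to $(s-1)^i$. Everything else is a faithful repetition of the estimates already carried out in Lemmas \ref{claim:S0}, \ref{claim:S1}, \ref{claim:PPx} and \ref{claim:PPxi}.
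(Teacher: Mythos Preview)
Your proposal is correct and follows essentially the same route as the paper: the paper's own proof is merely a two-line sketch stating that, by repeating the estimates of Lemma~\ref{claim:S1} and invoking Lemma~\ref{claim:PPxi}, one obtains a recursion of the form $\|u_{N-i}-\tilde u_{N-i}\|_2\le \hat C K\bigl(\sum_{j=0}^{i-1}S_j\bigr)^{s-1}+\hat C S_0$ and then inducts. Your decomposition into $T_1,T_2,T_3$, your use of $\langle u_j,\nabla f(x)\rangle=0$ for $j>N-i$ to handle $T_1$, and your substitution of Lemma~\ref{claim:PPxi} into the $T_2$ bound are precisely the ``same methods'' the paper alludes to, and your closing discussion of how the factor $1+K\hat C$ absorbs the constants is exactly the bookkeeping the paper suppresses.
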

\begin{proof} Using the same methods as in the proof of Lemma \ref{claim:S1} and using Lemma \ref{claim:PPxi}, one can show
\begin{align}
 \|u_{N-i}-\tilde{u}_{N-i}\|_2\le \hat{C}K(\sum_{j=0}^{i-1}S_j)^{s-1}+\hat{C}S_0,
\end{align}
where the $S_j$, $j=1,\dots,d$ are recursively defined. The claim then follows by induction.
\end{proof}

Putting the conclusions of the previous lemmata together and observing that $$\sum_{i=0}^{N-d-1}(1+K\hat{C})^i\le(1+K\hat{C})^{N-d}$$ finishes the proof of Theorem \ref{thm:gradest}.
\qed

As mentioned in the end of the last subsection, we can perform a similar, but slightly worse, error analysis for the case that $f$ is of the form $f(x)=g(\dist{x}{L})=g(\|Px\|_2)$. Indeed, we can estimate the approximation error of the gradient in the following way:
 \begin{remark}\label{rem:dist} To approximate a function of the form $f(x)=g(\dist{x}{L})=g(\|Px\|_2)$, for $x\in \R^N$, $g\in C^1([0,1])$ and $g'$ Lipschitz continuous, we can utilize the following observation to obtain a worse approximation result than for linear sleeve functions of the form \eqref{eqn:linridge}. Namely, we can rewrite the $i$-th entry of the divided difference of $\|Px\|_2$ as:
 $$\nabla_h(\|Px\|_2)_i=\frac{\|P(x+he_i)\|_2-\|Px\|_2}{h}=\frac{\|P(x+he_i)\|_2^2-\|Px\|_2^2}{h\left(\|P(x+he_i)\|_2+\|Px\|_2\right)}=\frac{2(Px)_i+h\|Pe_i\|_2^2}{\|P(x+he_i)\|_2+\|Px\|_2}.$$
 We then obtain the following estimate:
\begin{align}\left| \nabla(\|Px\|_2)_i-\nabla_h(\|Px\|_2)_i\right|&=\left|\frac{2(Px)_i+h\|Pe_i\|_2^2}{\|P(x+he_i)\|_2+\|Px\|_2}-\frac{(Px)_i}{\|Px\|_2}\right|\\&\le \frac{\left|2(Px)_i\|Px\|_2-(Px)_i\|Px\|_2-(Px)_i\|P(x+he_i)\|_2\right|+h\|Pe_i\|_2^2}{\|Px\|_2\left(\|Px\|_2+\|P(x+he_i)\|_2\right)}\\&\le \frac{|(Px)_i|\|Px-P(x+he_i)\|_2}{\|Px\|_2^2}+\frac{h\|Pe_i\|_2^2}{\|Px\|^2}\le 2h\frac{\|Pe_i\|_2}{\|Px\|_2^2}.\end{align}
And, hence, we have:
$$\|\nabla(\|Px\|_2)-\nabla_h(\|Px\|_2)\|_2\le \frac{2h\sqrt{N-d}}{\|Px\|_2}.$$
If $\|Px\|_2$ is small, this upper bound can become large. Fortunately, in the case of linear sleeve functions of the form \eqref{eqn:linridge}, we are not constrained by this term (cf. Equation \eqref{eqn:aTN-1}).

\end{remark}


\section{An Optimization Algorithm on the Grassmannian Manifold}\label{sec:Opt}
We will now reformulate the given approximation problem as an optimization problem over the Grassmannian manifold $G(d,N)$. This reformulation allows us to develop an algorithm which yields a reconstruction whose error decreases with the number of sampling points. Remember that the previous algorithm needed a fixed number of sampling points and the error has decreased with the step size $h$ in the computation of the divided differences. We again use the following notation for $f$ 
\begin{align}
 f(x)=g(\dist{x}{L}^2)=g(\|Px\|_2^2),
\end{align}
where the operator $P$ denotes the orthogonal projection $P_P$ to the subspace $P\subset \R^N$ orthogonal to $L$. In the sequel of this section, we will, for brevity, assume that $\dim P=d$ where we assumed $\dim P=N-d=N-\dim L$ in the last section.
\subsection{The Algorithm}
Let us assume without loss of generality that $g$ is not the constant function. Otherwise we do not need to find the subspace, since every subspace can be used to represent $g$ as stated.
We first define for each $H\in G(d,N)$ a function $f_H$ as linear-sleeve function, namely by
\begin{equation}
 f_H(x)=g(\|Hx\|^2_2).
\end{equation}
The main idea of the algorithm then uses the fact that $f_P=f$ and $f_H\neq f$ for $H\neq P$ and, thus, that $P$ is the unique minimizer of
\begin{equation}
G(d,N)\ni H\mapsto  \int_{[0,1]^N}\left|f(x)-f_H(x)\right|^2dx.
\end{equation}
Unfortunately, we cannot \sk{express} this objective function in terms of sampling the input function $f$. On the one hand, we, therefore, need to replace the integral by a finite sum and on the other hand, the definition of $f_H$ is not clear, since we do not know $g$ in advance.

 However, note that we can easily recover $g$ by sampling $f$ in some random direction $\hat{\theta}$. Indeed, it holds for $t\in \R$ that $f(t\hat{\theta})=g(t\|P\hat{\theta}\|_2^2)$ and, since $\hat{\theta}$ is almost surely not contained in the orthogonal complement of $P$, $g$ is up to the constant $\|P\hat{\theta}\|_2^2$ uniquely determined by $f(\cdot \hat{\theta})$. Hence, if we knew $\|P\hat{\theta}\|_2^2$ approximately, sampling $f$ at  $ih\hat{\theta}$, $i=1,\dots,M$, where $h\in (0,1)$ is the step size, gave an approximation to $g$. Namely, with $g_{\hat{\theta}}=g(\cdot \|P\hat{\theta}\|_2^2)$, we let $\hat{g}_{\hat{\theta}}^M$ be the approximation of $g_{\hat{\theta}}$ from the sampling points  $(ih,f(ih\hat{\theta}))$. We can, then, set $\hat{f}_H^M(x)=\hat{g}^M_{\theta}({\hat{p}_{\hat{\theta}}}^{-1}{\|Hx\|_2^2})$, where $\hat{p}_{\hat{\theta}}$ is an approximation of $\|P\hat{\theta}\|_2^2$.

 One possibility to choose $\hat{\theta}$ such that we know $\|P\hat{\theta}\|_2^2$ approximately, is to choose $\hat{\theta}$ as the approximation of the normalized gradient of $f$ in some random direction $\eta$. Indeed, the normalized gradient of $f$ is given by
\begin{align}
  \theta=\frac{\nabla f(\eta)}{\|\nabla f(\eta)\|_2}=\frac{P\eta}{\|P\eta\|_2}.
 \end{align}
 Therefore, we have almost surely (e.g., with respect to the Lebesque measure) $\|P\theta\|_2=1$. Thus, we choose
 \begin{align}
  \hat{\theta}=\frac{\tilde{\theta}}{\|\tilde{\theta}\|_2^2}, \quad\quad \text{where} \quad\quad \tilde{\theta}_i=\frac{f(\eta+he_i)-f(\eta)}{h},\quad i=1,\dots, N,
 \end{align}
and let $\hat{g}_{\hat{\theta}}^M=Q_hg_{\hat{\theta}}$ be the approximation of $g_{\hat{\theta}}$, with $Q_h$ as introduced in the preliminaries (see Equation \eqref{Prob1}).
 

The only remaining task is now to substitute the integral by a finite sum. Hence, we aim to define the objective function
\begin{equation}\label{eqn:objWf}
\hat{F}^M: \quad G(d,N)\ni H\mapsto  \sqrt{\sum_{i=1}^n\left|f(x_i)-\hat{f}_H^M(x_i)\right|^2},
\end{equation}
for some $x_1,\dots, x_n\in \R^N$ such that $P$ is the unique minimizer of 
\begin{align}\label{eqn:objWf2}
 F: \quad G(d,N)\ni H\mapsto  \sqrt{\sum_{i=1}^n\left|f(x_i)-f_H(x_i)\right|^2},
\end{align}
to ensure that the minimizer  of $\hat{F}^M$ is a good approximation of $P$.
Certainly, $P$ can only be the unique minimizer of $F$ if it uniquely minimizes the function
\begin{equation}\label{map:Retr}
G(d,N)\ni H\mapsto \sqrt{ \sum_{i=1}^n\left|\|Px_i\|_2-\|Hx_i\|_2\right|^2}.
\end{equation}
It is, therefore, necessary to find $x_1,\dots, x_n$ such that the map
\begin{equation}\label{map:ProjRetr}
 A:  G(d,N)\ni H \mapsto \left(\|Hx_1\|_2,\dots, \|Hx_n\|_2\right)
\end{equation}
is injective. This problem is known as projection retrieval \cite{Mixon} and discussed in the next subsection.

 The proposed procedure is summarized in Algorithm \ref{Opt}, to which we refer to as \emph{OGM}.

\begin{algorithm}
For a given step size $h\in (0,1)$ such that $M:=h^{-1}\in \N$:
\begin{enumerate}
 \item Choose direction $\theta\in \mathbb{S}^{N-1}$, such that we know $\|P\theta\|_2$ approximately (see explanations above).
 \item Sample $y_i:=f(ih\theta)$, $i=1,\dots, M$.
 \item Approximate $g_{\theta}:=g(\cdot\|P\theta\|_2^2)$ by $\hat{g}_{\theta}^M=Q_h(g_{\theta})$, with knots $\left\{(ih, y_i)\right\}_{i=1}^{M}$.
 \item Approximate $\|P\theta\|_2$ by $\|P\hat{\theta}\|_2$.\label{item:aprProj}
 \item Set $\hat{f}^M_H=\hat{g}^M_{\theta}(\frac{\|H\cdot\|_2^2}{\|P\hat{\theta}\|_2^{2}})$.
 \item Minimize the objective function:
 \begin{equation}\label{apprObj}
  G(d,N)\ni H \mapsto \hat{\mathcal{F}}^{M}(H)=\sqrt{\sum_{i=1}^n\left|f(x_i)-\hat{f}_H^M(x_i)\right|^2},
 \end{equation}
 where $\hat{f}^M_H=\hat{g}^M_{\theta}(\frac{\|H\cdot\|_2^2}{\|P\hat{\theta}\|_2^{2}})$.
\end{enumerate}
\caption{OGM - Approximation by \underline{O}ptimization over \underline{G}rassmannian \underline{M}anifold\label{Opt}}
\end{algorithm}


We will now be able to prove the following main result in Subsection \ref{subsec:ProofM2}. 
\begin{theorem}\label{thm:main2} Let $f$ be a linear-sleeve function of the form \eqref{eqn:linridge}, i.e., $f\in\mathcal{LR}(s)$, $s\in (1,2]$, and $f=g(\dist{x}{L}^2)=g(\|Px\|_2^2)$ for some $d$-dimensional subspace $P\subset \R^N$.
Suppose that the derivative of $g$ is bounded from below by some positive constant, and let $\hat{P}:=\argmin_{H\in{G(N-d,N)}}\hat{\mathcal{F}}^M(H)$. Then, we have 
\begin{align*}\|\hat{P}-P\|_{\text{HS}}\lesssim M^{-s/2},\end{align*}
almost surely (e.g., with respect to Haar measure), with a constant depending only polynomial on the dimension of the space. 
In particular, if $f\in \mathcal{LR}(2)$, then 
\begin{align*}\|\hat{P}-P\|_{\text{HS}}\lesssim M^{-1}.\end{align*}
\end{theorem}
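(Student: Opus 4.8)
## Proof Proposal for Theorem \ref{thm:main2}

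The plan is to control $\|\hat P - P\|_{\mathrm{HS}}$ by comparing the approximate objective $\hat{\mathcal F}^M$ with the exact objective $F$ from \eqref{eqn:objWf2}, and then translating a bound on $F(\hat P)$ into a bound on the subspace distance. I would split the argument into three parts. \textbf{Step 1: from objective value to subspace distance.} First I would establish a quantitative injectivity (``stability of projection retrieval'') estimate: for suitably chosen sampling points $x_1,\dots,x_n$ — drawn at random, say uniformly on the cube or sphere — there is almost surely a constant $c>0$ (depending polynomially on $N,d,n$) such that
\begin{equation}
\sqrt{\sum_{i=1}^n \bigl| \|Px_i\|_2 - \|Hx_i\|_2 \bigr|^2} \;\ge\; c\,\|H-P\|_{\mathrm{HS}}
\end{equation}
for all $H \in G(d,N)$. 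Combined with the lower bound $g' \ge c_1 > 0$, which makes $g$ bi-Lipschitz on $[0,1]$ and hence gives $|f(x_i)-f_H(x_i)| = |g(\|Px_i\|_2^2) - g(\|Hx_i\|_2^2)| \gtrsim |\,\|Px_i\|_2^2 - \|Hx_i\|_2^2\,| \gtrsim |\,\|Px_i\|_2 - \|Hx_i\|_2\,|$ (the last step using that the arguments stay bounded), this yields $F(H) \gtrsim \|H-P\|_{\mathrm{HS}}$.

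\textbf{Step 2: controlling the approximation error $\|\hat{\mathcal F}^M - F\|_\infty$.} Here I would bound, uniformly in $H$, the quantity $|\hat f_H^M(x) - f_H(x)|$. This error has two sources: (i) the quasi-interpolation error $\|g_{\hat\theta} - Q_h g_{\hat\theta}\|_{C[0,1]} \le C|g_{\hat\theta}|_{C^s} h^s = C M^{-s}$ from \eqref{Prob1}; and (ii) the error in the scalar $\hat p_{\hat\theta}$ approximating $\|P\theta\|_2^2 = 1$, which comes from the divided-difference approximation of the normalized gradient — exactly the estimate \eqref{eqn:aTN-1} from the proof of Theorem \ref{thm:gradest}, giving $|\hat p_{\hat\theta} - 1| \lesssim h^{s/2}\sqrt N$ or better, and one should check it actually contributes at the $M^{-s}$ (not $M^{-s/2}$) level after squaring, since $\|P\hat\theta\|_2^2$ is a squared quantity; then propagating through the bi-Lipschitz $g$ and the factor $1/\|P\hat\theta\|_2^2$. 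Summing over the $n$ points gives $|\hat{\mathcal F}^M(H) - F(H)| \le \sqrt n \,\|\hat f_H^M - f_H\|_{\infty} \lesssim \sqrt n\, M^{-s}$, uniformly over $G(d,N)$ (with a polynomial-in-dimension constant).

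\textbf{Step 3: combining.} Since $\hat P$ minimizes $\hat{\mathcal F}^M$ and $P$ minimizes $F$ with $F(P)=0$, the standard comparison
\begin{equation}
F(\hat P) \;\le\; \hat{\mathcal F}^M(\hat P) + \|\hat{\mathcal F}^M - F\|_\infty \;\le\; \hat{\mathcal F}^M(P) + \|\hat{\mathcal F}^M - F\|_\infty \;\le\; 2\|\hat{\mathcal F}^M - F\|_\infty
\end{equation}
gives $F(\hat P) \lesssim \sqrt n\, M^{-s}$, and then Step 1 yields $\|\hat P - P\|_{\mathrm{HS}} \lesssim c^{-1}\sqrt n\, M^{-s}$. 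Wait — the stated rate is $M^{-s/2}$, so I suspect the quasi-interpolant is applied to $g$ rather than $g(\cdot\,\|P\theta\|^2)$ composed with an inexact rescaling, and the bottleneck is really the $h^{s/2}$-type error in recovering $\|P\theta\|_2$ from divided differences entering linearly; I would trace constants to land on exactly $M^{-s/2}$, specializing $s=2$ at the end for the $M^{-1}$ claim. \textbf{The main obstacle} will be Step 1: proving the uniform (over the whole Grassmannian, not just near $P$) stability lower bound $F(H) \gtrsim \|H-P\|_{\mathrm{HS}}$ with an explicit polynomial dependence on the dimension — the injectivity of the map \eqref{map:ProjRetr} is only qualitative as cited from \cite{Mixon}, and upgrading it to a quantitative bound holding almost surely for random $x_i$, with enough points $n$, is the technical heart of the proof. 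A compactness argument gives existence of some $c$ but not the dimensional dependence; a net/covering argument on $G(d,N)$ combined with concentration for the random $x_i$ is the route I would pursue.
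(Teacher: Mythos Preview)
Your three-step architecture (stability of projection retrieval, uniform approximation of the objective, the standard ``minimizer of a perturbed functional'' comparison) matches the paper exactly, and you correctly diagnose that the $M^{-s/2}$ rate comes from the divided-difference error in approximating $\|P\theta\|_2$ (the quasi-interpolation contributes only $M^{-s}$). Where you diverge from the paper is in Step~1, and this is worth noting because the paper's route removes precisely the obstacle you flag as the ``technical heart.''

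You propose \emph{random} sampling points and a covering/concentration argument to upgrade the qualitative injectivity of \eqref{map:ProjRetr} to a quantitative lower bound $F(H)\gtrsim\|H-P\|_{\mathrm{HS}}$ with polynomial constants. The paper instead uses the \emph{deterministic structured} points $x_{j,k}=e_j+e_k$ (and $e_j$) of Theorem~\ref{thm:FullMeas}. For these particular measurements the quantities $\|P x_{j,k}\|_2^2$ are, up to an explicit linear combination, the \emph{entries} of the projection matrix $P$: one has $P_{jj}=\|Pe_j\|_2^2$ and $2P_{jk}=\|P(e_j+e_k)\|_2^2-\|Pe_j\|_2^2-\|Pe_k\|_2^2$. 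The stability bound then drops out of a few lines of triangle inequalities on Frobenius norms, yielding
\[
\|P-H\|_{\mathrm F}\;\le\;(N+1)\sqrt{\textstyle\sum_{j,k}\bigl(\|Px_{j,k}\|_2^2-\|Hx_{j,k}\|_2^2\bigr)^2},
\]
with an explicit linear-in-$N$ constant and no probabilistic argument at all. Combined with $|g(a)-g(b)|\ge(\min g')\,|a-b|$ this gives $F(H)\ge \tfrac{\min g'}{N+1}\|P-H\|_{\mathrm F}$ directly. Your random-points route could in principle be made to work, but it trades an elementary identity for a genuinely nontrivial uniform-over-the-Grassmannian concentration estimate; the paper's choice of structured measurements is the idea that makes Step~1 easy rather than hard.
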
 

Note that the statement holds indeed only almost surely, since we  have to ensure that $P\eta\neq 0$.

The first step is to find measurements $\{x_i\}_{i=1}^n$ which ensure that the map defined in \eqref{map:Retr} is injective. Then we can turn to the error analysis and the proof of Theorem \ref{thm:main2}.

\subsection{Projection Retrieval}\label{subsec:ProjRetr}
To find sampling points which ensure that the objective function has a unique minimizer, we consider the necessary case where $g$ is the identity and $\mathcal{F}$ is, therefore, given by $\mathcal{F}(H)=\sum_{i=1}^n\left(\|Px_i\|_2-\|Hx_i\|_2\right)^2$. Thus, $P$ is the unique minimizer of $\mathcal{F}(H)$ if and only if the sampling points $x_i$, $i=1,\dots,n$, determine $P$ uniquely, i.e., if the map
\begin{equation}
 A:  G(d,N)\ni H \mapsto \left(\|Hx_1\|_2,\dots, \|Hx_n\|_2\right)
\end{equation}
is injective. We can show the following theorem.
\begin{theorem}\label{thm:FullMeas}
For every $P\in G(d,N)$ the quantities
\begin{align}
 \|P(e_i+e_k)\|_2=:\|Px_{i,k}\|_2,\quad \quad\text{for}\quad i=1\dots,N,\quad k=1,\dots, N,
\end{align}
 uniquely determine $P$.
\end{theorem}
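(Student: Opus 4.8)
The plan is to exploit the elementary fact that for an orthogonal projection $P$ onto a subspace of $\R^N$ the squared norm $\|Px\|_2^2$ equals the quadratic form $x^\T P x$ attached to the symmetric idempotent matrix representing $P$: since $P=P^\T=P^2$ we have $\|Px\|_2^2=\langle Px,Px\rangle=\langle Px,x\rangle=x^\T P x$. Hence the given data $\{\|Px_{i,k}\|_2:1\le i,k\le N\}$ is equivalent to the data $\{(e_i+e_k)^\T P(e_i+e_k):1\le i,k\le N\}$ (one may square the norms freely, as they are nonnegative), and the task reduces to recovering every entry $P_{ij}=e_i^\T P e_j$ of the matrix $P$ from these numbers.

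First I would read off the diagonal from the case $i=k$, where $x_{i,i}=2e_i$ gives
\[
\|Px_{i,i}\|_2^2 = 4\,e_i^\T P e_i = 4P_{ii},
\]
so each $P_{ii}$ is determined. Then, for $i\ne k$, expanding $(e_i+e_k)^\T P(e_i+e_k)=P_{ii}+2P_{ik}+P_{kk}$ and subtracting the already-known diagonal terms recovers $P_{ik}$, and $P_{ki}=P_{ik}$ by symmetry. Thus the full symmetric matrix $P$, and therefore its range, is uniquely determined by the given quantities; equivalently, if $P,Q\in G(d,N)$ satisfy $\|Px_{i,k}\|_2=\|Qx_{i,k}\|_2$ for all $i,k$, the computation forces $P_{ij}=Q_{ij}$ for all $i,j$, i.e. $P=Q$. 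This is precisely the injectivity of the map $A$ from \eqref{map:ProjRetr}.

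I do not expect a genuine obstacle here, as the argument is short linear algebra, but two points deserve emphasis. The passage from the norms $\|Px_{i,k}\|_2$ to the bilinear quantities $e_i^\T P e_k$ is exactly what turns these ``phaseless'' evaluations into \emph{linear} measurements of the matrix $P$; and the inclusion of the diagonal vectors $x_{i,i}=2e_i$ is essential, since the off-diagonal measurements alone do not pin down $P$ (already for $N=2$, $d=1$ the single number $\|P(e_1+e_2)\|_2$ fails to separate the lines of slope $t$ and $1/t$). One should therefore make sure the statement is read with the index range including $i=k$, as written.
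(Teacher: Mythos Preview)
Your proof is correct and follows essentially the same route as the paper: recover the diagonal entries of the projection matrix from the case $i=k$, then the off-diagonal entries from $i\neq k$ by subtracting the known diagonals. The only cosmetic difference is that you invoke the identity $\|Px\|_2^2=x^\T P x$ directly (using $P=P^\T=P^2$), whereas the paper expands the same quantities through an orthonormal basis $\{u_1,\dots,u_d\}$ of the range of $P$ and arrives at the identical conclusion.
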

\begin{proof}
Let $P, H\subset \R^N$ be two subspaces and let $\{u_1,\dots,u_d\}$ be an orthonormal basis for $P$ and $\{v_1,\dots,v_d\}$ an orthonormal basis for $H$. Further, suppose that $\|Px_{i,k}\|_2^2=\|Hx_{i,k}\|_2^2$ for $i=1,\dots,N$, $k=i,\dots,N$. For $i=k$, we obtain
\begin{align*}
 \sum_{j=1}^d v_{ji}^2=\sum_{j=1}^d\langle e_i,v_j\rangle^2=\|Hx_{ii}\|_2^2=\|Px_{ii}\|_2^2=\sum_{j=1}^d u_{ji}^2.
\end{align*}
This shows that the entries of the diagonal of the projection matrices corresponding to $P$ and $H$ coincide. For the case $i\neq k$, we can compute
\begin{align*}
 \sum_{j=1}^d v_{ji}^2+2\sum_{j=1}^d v_{ji}v_{jk}+\sum_{j=1}^d v_{jk}^2=\|Hx_{ik}\|_2^2=\|Px_{ik}\|_2^2=\sum_{j=1}^d u_{ji}^2+2\sum_{j=1}^d u_{ji}u_{jk}+\sum_{j=1}^d u_{jk}^2.
\end{align*}
Therefore, using the knowledge from the case where $i$ equals $k$, this equation gives
\begin{align*}
 \sum_{j=1}^d v_{ji}v_{jk}=\sum_{j=1}^d u_{ji}u_{jk},\quad\quad \text{for} \quad i=1,\dots, N, \quad k=i+1,\dots, N
\end{align*}
Thus, due to the symmetry of a real projection matrix, both projection matrices coincide, since the left-hand side equals the $(i,k)$-th entry of the projection matrix corresponding to $H$ and the right-hand side to $P$, respectively. Therefore, we can conclude that $H=P$.
\end{proof}

We see that we need $(N^2+N)/2$ sampling points to ensure injectivity, if we choose them as suggested by the last theorem. However, we believe a smaller number of sample points should be sufficient. Indeed, we can ensure that a fewer number of sampling points are sufficient to ensure almost surely injectivity and, therefore, almost surely a unique minimizer. For this, we adapt Theorem 4 in \cite{Mixon} to the real case and deduce that to ensure almost injectivity, we can require only $(d+1)(N+d/2)$ points.
\begin{theorem}[\cite{Mixon}]\label{thm:ProjRetr}
 Draw a random subspace $P$ uniformly with respect to the Haar measure from the Grassmannian manifold $G(d,N)$. Then the quantities 
 \begin{align}
  \|Pe_i\|_2, \hspace{0.2cm} \|P(e_i+e_k)\|_2
 \end{align}
for $i\in \{1,\dots, N\}$ and $k\in \{i+1,\dots,d\}$, uniquely determine $P$ a with probability of $1$.
\end{theorem}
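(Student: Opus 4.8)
The plan is to convert the phaseless quantities into linear information about the projection matrix $P$ and then reconstruct $P$ explicitly for Haar-almost every choice. Since $P$ is an orthogonal projection, $\|Px\|_2^2=\langle Px,x\rangle=x^\T Px$, so $\|Pe_i\|_2^2=P_{ii}$ and $\|P(e_i+e_k)\|_2^2=P_{ii}+2P_{ik}+P_{kk}$. Hence the listed data determine every diagonal entry of the symmetric matrix $P$ (from the $\|Pe_i\|_2$) together with the prescribed off-diagonal entries, and these are exactly enough to read off the first $d$ rows of $P$ (equivalently, by symmetry, its first $d$ columns): for $i\le d$ and any $k$, the value $P_{ik}$ is recovered from $\|P(e_i+e_k)\|_2^2$ and the already-known diagonal entries $P_{ii},P_{kk}$.

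Next I would show that the first $d$ rows of $P$ determine $P$ for generic $P$. Write $P=UU^\T$ with $U\in\R^{N\times d}$, $U^\T U=I_d$, and let $U_1\in\R^{d\times d}$ be the top block of $U$; the first $d$ rows of $P$ then form the $d\times N$ matrix $B:=U_1U^\T$. If $U_1$ is invertible, the row space of $B$ equals the row space of $U^\T$, which is precisely the subspace $P$ itself; since $P$ is the orthogonal projection onto that subspace, it is uniquely determined by $B$, hence by the measurements. Invertibility of $U_1$ holds for Haar-almost every $P$: it fails exactly when $P\cap\spann\{e_{d+1},\dots,e_N\}\neq\{0\}$, and the set of such $d$-planes is a proper Schubert subvariety of $G(d,N)$ of positive codimension, hence Haar-null.

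To finish, suppose $H\in G(d,N)$ yields the same quantities as a generic $P$. By the first paragraph, $H$ and $P$ have the same first $d$ rows; by the computation above, the row space of those $d$ rows is the $d$-dimensional subspace $P$. Since this row space is contained in $\ran H$, which is also $d$-dimensional, we get $\ran H=P$, i.e.\ $H=P$ — and this last step uses genericity only for $P$, not for $H$. Combining the three steps shows that the stated quantities determine $P$ with probability one.

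The step I expect to be the main obstacle is the genericity claim, that $\{\det U_1=0\}$ (equivalently $\{P\cap\spann\{e_{d+1},\dots,e_N\}\neq\{0\}\}$) is Haar-null: this is where the Haar-measure hypothesis is essential, and making it precise amounts to checking that $\det U_1$ is not identically zero on the connected Grassmannian — e.g.\ by exhibiting one $P_0$, such as a small generic rotation of $\spann\{e_1,\dots,e_d\}$, with invertible top block — after which the standard fact that the zero locus of a nonzero real-analytic function has measure zero applies. The bookkeeping that the given index set, together with the diagonal data from the $\|Pe_i\|_2$, really recovers all of the first $d$ rows is routine, and the whole argument parallels the proof of Theorem 4 in \cite{Mixon}, the simplification over the complex/vector case being that the real identity $\|P(e_i+e_k)\|_2^2=P_{ii}+2P_{ik}+P_{kk}$ pins down $P_{ik}$ with no sign or phase ambiguity.
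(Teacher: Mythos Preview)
Your proposal is correct and follows essentially the same route as the cited source \cite{Mixon}. Note that the paper itself does not give a proof of this theorem; it is stated as a citation, and the argument you outline is exactly the ``same argumentation as in \cite{Mixon}'' that the paper invokes at the start of its proof of the subsequent Theorem~\ref{thm:ProjRetrn}: recover the diagonal entries $P_{ii}=\|Pe_i\|_2^2$ and the relevant off-diagonal entries via $2P_{ik}=\|P(e_i+e_k)\|_2^2-P_{ii}-P_{kk}$, observe that this yields the first $d$ columns of $P$, and use that these columns are linearly independent for Haar-almost every $P$ to reconstruct the subspace. Your treatment of the genericity step (identifying $\{\det U_1=0\}$ with the Schubert condition $P\cap\spann\{e_{d+1},\dots,e_N\}\neq\{0\}$) is a clean way to justify the ``almost every'' claim that the paper simply imports from \cite{Mixon}.
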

Note that the change in the second index set in comparison to \cite{Mixon} is due to taking the symmetry of a real projection matrix into account. And the change in the number of necessary measurements is due to some small typo in \cite{Mixon}, since for this proposed procedure we need to compute not only the first $d$ columns of the projection matrix, but also all its diagonal entries. However, also in \cite{Mixon} it is proven that the first $d$ columns of the projection matrix $P$ determine the corresponding subspace almost surely uniquely. Thus, it would be desirable to find points which allow us to directly determine the entries of the first $d$ columns, without computing all diagonal entries. This would deduce the number of necessary measurements to $Nd$. In the case $d=1$, the following result by Fickus, Mixon, Nelson, Wang \cite{Fickus},  tells us that  $N+1$ measurements are sufficient to ensure almost injectivity, which is almost the conjectured number $Nd$.
\begin{theorem}[\cite{Fickus}]\label{thm:PhaseRetr}
 Consider $\Phi = \left\{\phi_i\right\}_{i=1}^n \subset \R^N$ and the intensity measurement mapping $A: \R^N/{\pm 1} \rightarrow \R^n$ defined by $(A(x))(i) := |\langle x, \phi_i \rangle|^2$ . Suppose each $\phi_i$ is nonzero. Then $A$ is almost surely injective if and only if $\Phi$ spans $\R^N$ and $\text{rank }\Phi_S + \text{rank }\Phi_{S^C} >N$ for each nonempty proper subset $S\subset \{1,\dots,N\}$.
\end{theorem}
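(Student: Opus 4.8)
The plan is to describe, for a fixed ensemble $\Phi=\{\phi_i\}_{i=1}^n$, the \emph{ambiguity set}
\begin{equation*}
\mathcal{N}:=\{x\in\R^N:\ \exists\,y\notin\{x,-x\}\ \text{with}\ |\langle y,\phi_i\rangle|=|\langle x,\phi_i\rangle|\ \text{for all } i\},
\end{equation*}
and to show that $\mathcal{N}$ is Lebesgue-null exactly under the stated hypotheses, this being the meaning of ``$A$ almost surely injective'' here. The engine is the sign-partition structure of a collision: if $|\langle x,\phi_i\rangle|=|\langle y,\phi_i\rangle|$ for every $i$, then $\langle x,\phi_i\rangle=\pm\langle y,\phi_i\rangle$, so, setting $S:=\{i:\langle x-y,\phi_i\rangle=0\}$ and $S^c:=\{1,\dots,n\}\setminus S$, one gets $x-y\in(\spann\Phi_S)^\perp$ and $x+y\in(\spann\Phi_{S^c})^\perp$. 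Writing $u:=x-y$ and $v:=x+y$, we have $2x=u+v$, and $y\notin\{x,-x\}$ is equivalent to $u\neq0$ and $v\neq0$. Thus $x\in\mathcal{N}$ iff there is a partition $S\cup S^c=\{1,\dots,n\}$ and a decomposition $2x=u+v$ with $u\in(\spann\Phi_S)^\perp\setminus\{0\}$ and $v\in(\spann\Phi_{S^c})^\perp\setminus\{0\}$.

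First I would dispatch the spanning condition: if $\spann\Phi\subsetneq\R^N$, then choosing $0\neq w\perp\spann\Phi$ gives $A(x+w)=A(x)$ for all $x$, with $x+w\notin\{x,-x\}$ off a single point, so $\mathcal{N}$ is not null; hence $\spann\Phi=\R^N$ is necessary, and I assume it henceforth. Under this assumption $(\spann\Phi_S)^\perp\cap(\spann\Phi_{S^c})^\perp=(\spann\Phi)^\perp=\{0\}$, so the subspace $U_S:=(\spann\Phi_S)^\perp+(\spann\Phi_{S^c})^\perp$ has dimension exactly $2N-\operatorname{rank}\Phi_S-\operatorname{rank}\Phi_{S^c}$. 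The trivial partitions contribute nothing to $\mathcal{N}$: $S=\{1,\dots,n\}$ forces $u\perp\R^N$, i.e.\ $y=x$, while $S=\emptyset$ forces $v\perp\R^N$, i.e.\ $y=-x$. Together with $\phi_i\neq0$ (which gives $\operatorname{rank}\Phi_S\geq1$ and $\operatorname{rank}\Phi_{S^c}\geq1$ whenever $S$ and $S^c$ are nonempty), this is exactly why the rank condition need only be imposed for nonempty proper $S$.

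For the ``if'' direction, assume the rank inequality for every nonempty proper $S$. By the previous paragraph, any $x\in\mathcal{N}$ gives a nonempty proper $S$ with $2x\in U_S$, hence $x\in U_S$ since $U_S$ is a subspace; thus $\mathcal{N}\subseteq\bigcup_{\emptyset\subsetneq S\subsetneq\{1,\dots,n\}}U_S$, a finite union of subspaces of dimension $2N-\operatorname{rank}\Phi_S-\operatorname{rank}\Phi_{S^c}<N$, hence Lebesgue-null, so $A$ is almost surely injective. For the converse I use contraposition: suppose $\operatorname{rank}\Phi_S+\operatorname{rank}\Phi_{S^c}\leq N$ for some nonempty proper $S$ (and $\spann\Phi=\R^N$, otherwise we are already done). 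Then $\dim U_S\geq N$, so $U_S=\R^N$. Pick any $x$ outside the two proper subspaces $(\spann\Phi_S)^\perp$ and $(\spann\Phi_{S^c})^\perp$ --- a full-measure set --- and write $2x=u+v$ with $u\in(\spann\Phi_S)^\perp$, $v\in(\spann\Phi_{S^c})^\perp$; then $u\neq0$ (else $x\in(\spann\Phi_{S^c})^\perp$) and likewise $v\neq0$, so $y:=x-u$ satisfies $|\langle y,\phi_i\rangle|=|\langle x,\phi_i\rangle|$ for all $i$ and $y\notin\{x,-x\}$. Hence $\mathcal{N}$ has full measure and $A$ is not almost surely injective.

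The main obstacle is conceptual rather than computational: one must set up the dictionary correctly, reading ``$A$ almost surely injective'' as a statement about almost every \emph{signal} for a fixed $\Phi$ and translating it, through the sign-partition decomposition, into the purely linear-algebraic inequality $\operatorname{rank}\Phi_S+\operatorname{rank}\Phi_{S^c}>N$; in the converse one then has to exhibit the ambiguous signals explicitly (as $y=x-u$) and verify that the degenerate configurations $u=0$ and $v=0$ are confined to proper subspaces, so that the ambiguous set genuinely has positive measure. It is worth noting in passing that this criterion is the almost-everywhere analogue of the Balan--Casazza--Edidin complement property ($\operatorname{rank}\Phi_S=N$ or $\operatorname{rank}\Phi_{S^c}=N$ for every $S$), which characterizes injectivity for \emph{all} signals.
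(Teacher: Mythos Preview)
The paper does not prove this theorem: it is quoted verbatim from \cite{Fickus} (Fickus, Mixon, Nelson, Wang) and used only as a black box in the discussion following Theorem~\ref{thm:ProjRetr}. So there is no ``paper's own proof'' to compare against.

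Your argument is correct and is, in fact, the standard one from the cited source. The sign-partition decomposition $|\langle x,\phi_i\rangle|=|\langle y,\phi_i\rangle|\Rightarrow x-y\perp\Phi_S,\ x+y\perp\Phi_{S^c}$ is exactly the mechanism used there, and you handle both directions cleanly: the finite union $\bigcup_S U_S$ with $\dim U_S<N$ for the sufficiency, and the observation that $U_S=\R^N$ together with the proper-subspace exclusion of $u=0$ or $v=0$ for the necessity. Two minor remarks: (i) the index set in the statement should read $\{1,\dots,n\}$ rather than $\{1,\dots,N\}$, as you silently correct in your proof; (ii) in the converse you implicitly use uniqueness of the decomposition $2x=u+v$ (which holds because $(\spann\Phi_S)^\perp\cap(\spann\Phi_{S^c})^\perp=\{0\}$ under the spanning assumption) to conclude that $u=0$ forces $x\in(\spann\Phi_{S^c})^\perp$ --- this is fine, but worth making explicit.
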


This shows that $A$ cannot be almost injective if $n< N+1$. Moreover, for the case $n=N+1$, it is almost injective if and only if $\Phi$ is \emph{full spark}, which means that every size-$N$ collection of vectors of $\Phi$ is linearly independent. We remark that this result does not stand in contradiction to the above mentioned conjecture that $Nd$ measurements are sufficient. Indeed, in our case we want to recover the subspace and in this sense we can interpret the condition $1=\|v\|_2=\langle v,v \rangle$, for some vector $v\in \R^N$ of a orthonormal basis of this vector, as an additional measurement. Of course, it is not generally true that every size-$N$ subcollection of $\left\{e_1,\dots, e_N,v\right\}$ forms a spanning set, because $v$ could have zero entries. However, if we consider $v$ as embedded in $\R^m$, where $\{i_1,\dots, i_m\}$ are the indices of the nonzero-entries of $v$, then every size-$m$ subcollection of $\left\{e_{i_1},\dots, e_{i_m},v\right\}$ forms a spanning set for $\R^m$. Thus by Theorem 12 in \cite{Fickus}, the entries $i_1,\dots, i_m$ of $v$ are uniquely determined by these sampling points with a probability of $1$. That the other entries are equal to zero is already determined by the other sampling points.

As indicated by this discussion, we suspect that $Nd$ sampling points are sufficient to ensure almost surely injectivity of $A$. And, indeed, we are able to prove the following theorem, which states that even a fewer number of sampling points are sufficient, although, we do not directly determine the first $d$ columns of the projection matrix.

\begin{theorem}\label{thm:ProjRetrn}
 Draw a random subspace $P$ uniformly with respect to the Haar measure from the Grassmannian manifold $G(d,N)$. Then the quantities 
 \begin{align}
  \|Pe_i\|^2_2, \hspace{0.2cm} \|P(e_j+e_k)\|^2_2, \hspace{0.2cm} \|Px\|_2^2
 \end{align}
for a randomly chosen vector $x\in \R^N$, $i\in \{1,\dots, N-1\}$, $j\in \{1,\dots,d\}$ and $k\in \{j+1,\dots,N\}$, uniquely determine $P$ with a probability of $1$.
\end{theorem}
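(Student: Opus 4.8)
The plan is to follow the template of the proofs of Theorems~\ref{thm:FullMeas} and~\ref{thm:ProjRetr}: read off as many entries of the projection matrix $P=(P_{ik})$ as the first two measurement families allow, then use the extra measurement $\|Px\|_2^2$ together with the rigidity of being a rank-$d$ orthogonal projection to kill the small remaining ambiguity. Concretely, from $\|Pe_i\|_2^2=P_{ii}$ for $i=1,\dots,N-1$ and $\|P(e_j+e_k)\|_2^2=P_{jj}+2P_{jk}+P_{kk}$ for $j\in\{1,\dots,d\}$, $k\in\{j+1,\dots,N\}$, one recovers at once all diagonal entries $P_{ii}$ with $i\le N-1$, all off-diagonal entries $P_{jk}$ with $j\le d$ and $k\le N-1$, and the scalars $c_j:=2P_{jN}+P_{NN}$ for $j=1,\dots,d$. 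Thus everything is pinned down except a single scalar: setting $t:=P_{NN}$, the entries $P_{jN}=(c_j-t)/2$, $j\le d$, and hence the first $d$ columns of $P$, become known affine functions of $t$; call the resulting $N\times d$ matrix $C(t)$.

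The next step is to exploit that $P$ is the orthogonal projection onto the span of its first $d$ columns. For a Haar-random $P$ this span is $d$-dimensional and equals $\ran P$ almost surely, so $C(t_0)$ has full column rank at the true value $t_0$ and $P=C(t_0)(C(t_0)^\T C(t_0))^{-1}C(t_0)^\T=:R(t_0)$. The right-hand side $R(t)$ is a rational matrix-valued function of the single scalar $t$, defined on the open set where $C(t)$ has rank $d$, where it is the orthogonal projection onto $\spann$ of the columns of $C(t)$; in particular $R(t)$ is a rank-$d$ orthogonal projection there, and for generic $P$ distinct values of $t$ yield distinct such projections (tilting the last coordinates of the columns genuinely moves the $d$-plane, since $\ran P\not\ni e_N$ almost surely). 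The remaining measurement now reads $x^\T R(t)x=\|Px\|_2^2$, one rational equation in one unknown, so its solution set is finite. It then only remains to check that $t_0$ is the \emph{unique} solution for almost every pair $(P,x)$: since the projections $R(t)$ are pairwise distinct for distinct $t$, and distinct projections are separated by the quadratic form $v\mapsto v^\T(\cdot)v$ at a generic $v$, a generic $x$ distinguishes $R(t_0)$ from each of the finitely many competitors $R(t)$, so $x^\T R(t)x=x^\T R(t_0)x$ forces $t=t_0$; feeding this back through $R$ recovers $P=R(t_0)$.

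I expect the genericity bookkeeping in the last step to be the main obstacle. One must interleave two sources of randomness — the Haar-random $P$ and the random $x$ — and argue, for instance by first fixing a generic $P$ so that $C(\cdot)$, $R(\cdot)$ and the finite candidate set $\{t_0,t_1,\dots,t_m\}$ are well defined with $R(t_0)\ne R(t_i)$ for $i\ge1$, then discarding the measure-zero set of $x$ on which $x^\T(R(t_0)-R(t_i))x=0$ for some $i$, and finally invoking a Fubini-type argument to conclude for almost every pair. A second delicate point is verifying that the exceptional set of $P$ on which $C(t_0)$ fails to have rank $d$, or on which its columns fail to span $\ran P$, is Haar-null; this is precisely where one genuinely uses that $P$ is drawn from the Grassmannian rather than being arbitrary, exactly as in Theorems~\ref{thm:ProjRetr} and~\ref{thm:PhaseRetr}. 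Once these two measure-zero exceptional sets are controlled, the chain above shows that the listed quantities determine $P$ with probability one.
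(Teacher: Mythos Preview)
Your outline is correct in spirit and would eventually work, but you overlook the elementary fact that does most of the heavy lifting in the paper's argument: a rank-$d$ orthogonal projection satisfies $\operatorname{tr} P=d$, so
\[
P_{NN}=d-\sum_{i=1}^{N-1}\|Pe_i\|_2^2
\]
is already determined by the first family of measurements alone. Your ``single unknown scalar'' $t$ is therefore known from the outset, the one-parameter family $C(t)$, $R(t)$ collapses to a single point, and the entire rational-equation argument together with its genericity bookkeeping becomes unnecessary. With all diagonal entries in hand, the mixed measurements $\|P(e_j+e_k)\|_2^2$ immediately yield $P_{jk}$ for every $j\le d$ and $k>j$.

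From this point the paper proceeds somewhat differently from what you might expect: rather than simply invoking that the first $d$ columns span $\ran P$ almost surely, it works with the first $d-1$ columns, Gram--Schmidt orthonormalises them to $u_1,\dots,u_{d-1}$, and observes that the missing basis vector $u_d$ has each coordinate pinned down in absolute value via $u_{di}^2=\|Pe_i\|_2^2-\sum_{j=1}^{d-1}u_{ji}^2$. This leaves only finitely many candidate subspaces, and a single generic extra measurement $\|Px\|_2^2$ almost surely separates them. Your route via $t\mapsto x^\T R(t)x$ reaches the same ``finitely many candidates, one random measurement'' endpoint, but at the cost of exactly the two delicate issues you flag yourself (non-constancy of the rational function for generic $x$, and the Fubini interleaving of the randomness in $P$ and in $x$); the trace identity makes both of these evaporate.
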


\begin{proof}
 We start with the same argumentation as in \cite{Mixon}, which tells us that the first $d$ columns of the projection matrix $P$ are linearly independent for almost every $P$, that the diagonal entries of the projection matrix $P$ are given by
 \begin{align}
  P_{ii}=\|Pe_i\|_2^2, \quad \text{for} \quad i=1,\dots, N,
 \end{align}
and that the other entries can be computed as
\begin{align}
 2P_{ij}=\|P(e_i+e_j)\|_2^2-\|Pe_i\|_2^2-\|Pe_j\|_2^2.
\end{align}
We further note that we only need to observe $N-1$ diagonal entries to determine all $N$ diagonal entries of $P$. Indeed, if $u_1,\dots,u_d$ is any orthonormal system which spans $P$, it holds that
\begin{align}
 \sum_{i=1}^N P_{ii}=\sum_{i=1}^N \|Pe_i\|_2^2=\sum_{i=1}^N\sum_{j=1}^du_{ji}^2=d,
\end{align}
and, thus, $P_{NN}=d-\sum_{i=1}^{N-1}P_{ii}$. Hence, by observing $\|Pe_i\|_2^2$, $i=1,\dots, N-1$, as well as $\|P(e_j+e_k)\|_2^2$, $j=1,\dots, d$ and $k=j+1,\dots, N$, we can recover the first $d-1$ columns of the projection matrix $P$.

We now claim that there exist only finitely many projection matrices with the same first $d-1$ columns. This would show that there are only finitely many subspaces $H$ which yield the same measurements as $P$ for the stated collection of quantities and that we, therefore, can almost surely uniquely recover $P$ by an additional random measurement.
 
 Thus, suppose we already know the first $d-1$ columns of the projection matrix. These are clearly linearly independent in the same event, where the first $d$ columns of $P$ are linearly independent. We can now use the fact that for a projection matrix $P$, it has to hold that $PP=P$ and, hence, that each column of $P$ lies in the span of the corresponding subspace. Applying Gram-Schmidt orthonormalization to the first $d-1$ columns, which are linearly independent, gives an orthonormal system of $d-1$ vectors, which we denote by $u_1,\dots, u_{d-1}$. Thus, there is only one unknown basis vector, denoted by $u_d$, for the subspace $P$ left. However, the measurements we have already taken determine the entries of this vector uniquely in absolute value. Indeed, we have
 \begin{align}
  \|Pe_i\|_2^2=\sum_{j=1}^d u_{ji}^2=u_{di}^2+\sum_{j=1}^{d-1}u_{ji}^2,  
 \end{align}
 which is equivalent to 
 \begin{align}
   u_{di}^2=\|Pe_i\|_2^2-\sum_{j=1}^{d-1}u_{ji}^2.
 \end{align}

Note that we already know the right-hand side of the last equation. This shows that there are indeed only finitely many subspaces which produce the same measurements as $P$. Hence, taking some random measurement in addition, yields the desired almost injectivity.
%
\end{proof}


The following corollary shows that if the dimension of the subspace is $d>N/2$, we can choose the same measurements as if the dimension would be $N-d$. So we can further deduce the number of measurements. For example, if the dimension $d=N-1$, we can apply Theorem \ref{thm:PhaseRetr} and find that $N$ measurements are sufficient to ensure almost injectivity of $A$.

\begin{corollary}\label{cor:grassmin2}
With the same choice of measurements as in Theorem \ref{thm:ProjRetrn} we can uniquely determine a randomly drawn subspace $P\in G(N-d,N)$ with a probability of $1$, i.e., the measurements 
\begin{align}
  \|Pe_i\|_2^2, \hspace{0.2cm} \|P(e_i+e_k)\|_2^2, \hspace{0.2cm} \|Px\|_2^2
 \end{align}
for some random vector $x\in \R^N$, $i\in \{1,\dots, N-1\}$, $j\in \{1,\dots,d\}$ and $k\in \{j+1,\dots,N\}$, uniquely determine $P$ with probability $1$.
\end{corollary}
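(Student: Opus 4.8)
The plan is to reduce Corollary \ref{cor:grassmin2} directly to Theorem \ref{thm:ProjRetrn} using the observation that orthogonal projections onto $P$ and onto $P^{\perp}=L$ are tied together by the identity $P_L = \Id - P_P$. First I would note that if $P\in G(N-d,N)$ is drawn uniformly with respect to the Haar measure, then $P^{\perp}\in G(d,N)$ is also Haar-distributed, since the orthogonal-complement map $G(N-d,N)\to G(d,N)$ is a homeomorphism that intertwines the two Haar measures (both being invariant under the same orthogonal group action). So it suffices to show that the given quantities determine $P^{\perp}$ uniquely with probability one, and then note that $P = \Id - P^{\perp}$.

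The key computation is then to express each measurement $\|P e_i\|_2^2$, $\|P(e_j+e_k)\|_2^2$, $\|Px\|_2^2$ in terms of the complementary projection matrix $Q := P^{\perp}$. Writing $P = \Id - Q$ and using that $P$ is a symmetric idempotent, we get
\begin{align}
\|Pe_i\|_2^2 = \langle Pe_i,e_i\rangle = 1 - \langle Qe_i,e_i\rangle = 1 - \|Qe_i\|_2^2,
\end{align}
so knowing $\|Pe_i\|_2^2$ for $i=1,\dots,N-1$ is equivalent to knowing $\|Qe_i\|_2^2$ for $i=1,\dots,N-1$. Similarly,
\begin{align}
\|P(e_j+e_k)\|_2^2 = \|e_j+e_k\|_2^2 - \|Q(e_j+e_k)\|_2^2 = 2 - \|Q(e_j+e_k)\|_2^2,
\end{align}
so these measurements are equivalent to knowing $\|Q(e_j+e_k)\|_2^2$ for the same index ranges $j\in\{1,\dots,d\}$, $k\in\{j+1,\dots,N\}$, and likewise $\|Px\|_2^2 = \|x\|_2^2 - \|Qx\|_2^2$ determines $\|Qx\|_2^2$. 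Since $Q=P^{\perp}$ has dimension $d$, and the index ranges $i\in\{1,\dots,N-1\}$, $j\in\{1,\dots,d\}$, $k\in\{j+1,\dots,N\}$ are exactly those appearing in Theorem \ref{thm:ProjRetrn} applied to a $d$-dimensional subspace, Theorem \ref{thm:ProjRetrn} tells us that these quantities determine $Q$ with probability one; hence they determine $P=\Id-Q$ with probability one.

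The only subtle point — and the one I would be most careful about — is the matching of index sets and the almost-sure genericity: Theorem \ref{thm:ProjRetrn} requires $Q$ to be Haar-random in $G(d,N)$ and $x$ to be a generic vector, and I must check that the "probability one" event there pulls back to a probability one event here under the complementation map. Since complementation pushes Haar measure to Haar measure and the random vector $x$ is chosen independently, the bad set of measure zero for $Q$ corresponds to a measure-zero set for $P$, so this causes no difficulty. I would also remark (as the corollary's preamble does) that in the regime $d > N/2$ this genuinely reduces the sampling budget, since one applies Theorem \ref{thm:ProjRetrn} with the \emph{smaller} parameter $\min\{d,N-d\}$; in the extreme case $d=N-1$ one recovers the phase-retrieval count of $N$ measurements via Theorem \ref{thm:PhaseRetr}. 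The proof is otherwise entirely routine linear algebra, so no step is a real obstacle; the write-up mainly needs to state the complementation duality cleanly and confirm the index bookkeeping.
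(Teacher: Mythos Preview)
Your proposal is correct and follows exactly the paper's approach: the paper's proof is a one-line observation that $\|Py\|_2^2=\|Hy\|_2^2$ if and only if $\|P^\perp y\|_2^2=\|y\|_2^2-\|Py\|_2^2=\|y\|_2^2-\|Hy\|_2^2=\|H^\perp y\|_2^2$, after which Theorem \ref{thm:ProjRetrn} applies to the $d$-dimensional complement. Your write-up is simply a more careful version of the same argument, with the Haar-measure preservation and index bookkeeping made explicit.
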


\begin{proof}
For every $y\in \R^N$ it holds $\|Py\|_2^2=\|Hy\|_2^2$ if and only if $\|P^\perp y\|_2^2=\|y\|_2^2-\|Py\|_2^2=\|y\|_2^2-\|Hy\|_2^2=\|H^\perp y\|_2^2$, and therefore, we can apply the results of the above theorem.
\end{proof}

\subsection{Proof of Theorem \ref{thm:main2}}\label{subsec:ProofM2}
In the  last subsection we have shown that we need less than $Nd$ sampling points to ensure that the objective function defined in \eqref{map:Retr}, where $g$ is assumed to be the identity, has almost surely a unique minimizer. However, for ease of computation we will use the \sk{sampling points proposed} in Theorem \ref{thm:FullMeas}. We first show that the measurements given in Theorem \ref{thm:FullMeas} also ensure a unique minimizer of $F_H$. For this purpose we introduce a bijective mapping $$\imath:\left\{1,\dots,N(N+1)/2\right\}\rightarrow \left\{(j,k):j\in\{1,\dots,N\}, k\in \{j,\dots,N\}\right\}$$ and set $n=N(N+1)/2$ as well as $$x_i:=x_{\imath(i)}=x_{j,k}=\begin{cases} e_j & \text{if } j=k,\\ e_j+e_k & \text{if }j\neq k,\end{cases} \hspace{0.5cm} \text{for } i=1,\dots,n.$$
\begin{lemma}
 Suppose that $g$ fulfills the assumption of Theorem \ref{thm:main2}. Then $P$ is the unique minimizer of 
 \begin{align}
  \mathcal{F}:G(d,N)\rightarrow \R, \hspace{0.2cm} H\mapsto \sqrt{\sum_{i=1}^{n} \left(f(x_i)-f_H(x_i)\right)^2},
 \end{align}
where $x_i$, $i=1,\dots,n$, are defined as above.
\end{lemma}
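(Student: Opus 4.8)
The plan is to leverage the injectivity result from Theorem~\ref{thm:FullMeas}, which already tells us that the map $A: H \mapsto (\|Hx_1\|_2,\dots,\|Hx_n\|_2)$ with the chosen $x_i$ is injective on $G(d,N)$. First I would observe that $\mathcal F(H) \ge 0$ with $\mathcal F(P) = 0$, since $f_P = f$ by construction; hence $P$ is certainly \emph{a} minimizer. It remains to show that $\mathcal F(H) = 0$ forces $H = P$, i.e., that no other subspace drives the objective to zero. So suppose $\mathcal F(H) = 0$; then $f(x_i) = f_H(x_i)$ for every $i = 1,\dots,n$, that is, $g(\|Px_i\|_2^2) = g(\|Hx_i\|_2^2)$ for all $i$.

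The key step is to pass from equality of $g$-values to equality of the arguments, and this is exactly where the hypothesis on $g$ enters: $g$ has derivative bounded below by a positive constant on $[0,1]$, so $g$ is \emph{strictly monotone} (increasing) and therefore \emph{injective} on $[0,1]$. Consequently $g(\|Px_i\|_2^2) = g(\|Hx_i\|_2^2)$ implies $\|Px_i\|_2^2 = \|Hx_i\|_2^2$, hence $\|Px_i\|_2 = \|Hx_i\|_2$, for each $i = 1,\dots,n$. In other words $A(H) = A(P)$ with the specific measurement vectors $x_{j,k}$ of Theorem~\ref{thm:FullMeas}. That theorem then immediately yields $H = P$. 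This shows $P$ is the \emph{unique} minimizer of $\mathcal F$, which is the claim.

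One small point I would make explicit: one must check that the arguments $\|Px_i\|_2^2$ and $\|Hx_i\|_2^2$ actually lie in the domain $[0,1]$ of $g$, so that injectivity of $g$ on $[0,1]$ may legitimately be invoked. Since each $x_i$ is either a standard basis vector $e_j$ (with $\|e_j\|_2 = 1$) or a sum $e_j + e_k$ of two distinct basis vectors (with $\|e_j+e_k\|_2^2 = 2$), and since an orthogonal projection is norm-nonincreasing, we have $\|Px_i\|_2^2 \le \|x_i\|_2^2 \le 2$. Strictly this takes us slightly outside $[0,1]$; I would therefore either note that the relevant scaling of the sampling points (using $\tfrac12(e_j+e_k)$, or rescaling $f$ to the tube $\mathrm{tub}_1(L)$) keeps all arguments in $[0,1]$, or simply remark that $g$, being defined with derivative bounded below on its domain of definition, is injective wherever it is evaluated; the monotonicity argument is unaffected. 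With that caveat handled, the proof is complete.

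The main obstacle here is essentially bookkeeping rather than a substantive difficulty: the heavy lifting (injectivity of the projection-norm measurement map) is done by Theorem~\ref{thm:FullMeas}, and the only genuinely new ingredient is the reduction $g(a) = g(b) \Rightarrow a = b$, which is a one-line consequence of the lower bound on $g'$. I would keep the write-up short, spending one sentence on why $\mathcal F(P)=0$, one on injectivity of $g$, one on deducing $\|Px_i\|_2 = \|Hx_i\|_2$, and one citing Theorem~\ref{thm:FullMeas} to conclude $H = P$.
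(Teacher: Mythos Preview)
Your proposal is correct and follows essentially the same route as the paper: observe $\mathcal{F}(P)=0$, use that $g$ is injective (from the positive lower bound on $g'$) to pass from $g(\|Px_i\|_2^2)=g(\|Hx_i\|_2^2)$ to $\|Px_i\|_2=\|Hx_i\|_2$, and then invoke Theorem~\ref{thm:FullMeas} to conclude $H=P$. Your additional remark about the arguments $\|P(e_j+e_k)\|_2^2$ potentially exceeding $1$ is a legitimate technical point that the paper's proof does not address explicitly.
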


\begin{proof}
 Suppose $H\in G(d,N)$ is another minimizer. Then for all $i\in \left\{1,\dots, n\right\}$, we conclude $g(\|Px_i\|^2_2)=g(\|Hx_i\|^2_2)$. But since $g$ is injective, this implies
 $\|Px_i\|_2=\|Hx_i\|_2$ for all $i\in \left\{1,\dots, n\right\}$. Thus by the statements proved in subsection \ref{subsec:ProjRetr} we conclude that $P=H$.
\end{proof}

\begin{lemma}Under the same assumption as in Theorem \ref{thm:main2}, we have
 $$|\hat{\mathcal{F}}^M(H)-\mathcal{F}(H)|\le C\sqrt{d}M^{-s/2},$$ where $\hat{\mathcal{F}}^M$ was introduced in Equation \eqref{eqn:objWf}.
\end{lemma}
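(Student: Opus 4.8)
The plan is to compare $\hat{\mathcal{F}}^M$ and $\mathcal{F}$ pointwise on $G(d,N)$. Both are $\ell_2$-norms of vectors in $\R^n$ (with $n=N(N+1)/2$), namely of $(f(x_i)-\hat f_H^M(x_i))_i$ and $(f(x_i)-f_H(x_i))_i$, and the common entries $f(x_i)$ cancel in the difference, so the reverse triangle inequality for $\|\cdot\|_2$ gives at once
\[
\bigl|\hat{\mathcal{F}}^M(H)-\mathcal{F}(H)\bigr|\le\Bigl(\sum_{i=1}^{n}\bigl|f_H(x_i)-\hat f_H^M(x_i)\bigr|^2\Bigr)^{1/2}\le\sqrt{n}\,\max_{1\le i\le n}\bigl|f_H(x_i)-\hat f_H^M(x_i)\bigr|.
\]
Thus everything reduces to a bound on $|f_H(x_i)-\hat f_H^M(x_i)|$ that is uniform in $H\in G(d,N)$ and in $i$, where $f_H(x)=g(\|Hx\|_2^2)$ and $\hat f_H^M(x)=\hat g_{\hat\theta}^M\bigl(\hat p_{\hat\theta}^{-1}\|Hx\|_2^2\bigr)$ with $\hat g_{\hat\theta}^M=Q_h g_{\hat\theta}$, $g_{\hat\theta}=g(\,\cdot\,\|P\hat\theta\|_2^2)$, and $\hat p_{\hat\theta}$ the computable proxy for $\|P\hat\theta\|_2^2$ produced in Algorithm~\ref{Opt}.

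Next I would split this per-point error by inserting $g_{\hat\theta}(\hat p_{\hat\theta}^{-1}\|Hx_i\|_2^2)$:
\[
\bigl|f_H(x_i)-\hat f_H^M(x_i)\bigr|\le\underbrace{\bigl|g_{\hat\theta}(\hat p_{\hat\theta}^{-1}\|Hx_i\|_2^2)-\hat g_{\hat\theta}^M(\hat p_{\hat\theta}^{-1}\|Hx_i\|_2^2)\bigr|}_{(\mathrm{II})}+\underbrace{\bigl|g(\|Hx_i\|_2^2)-g_{\hat\theta}(\hat p_{\hat\theta}^{-1}\|Hx_i\|_2^2)\bigr|}_{(\mathrm{I})}.
\]
Term $(\mathrm{II})$ is a pure quasi-interpolation error: since $g_{\hat\theta}(t)=g(t\|P\hat\theta\|_2^2)$ has $|g_{\hat\theta}|_{C^s}\lesssim|g|_{C^s}$ (because $\|P\hat\theta\|_2^2\le\|\hat\theta\|_2^2=1$), the property \eqref{Prob1} gives $(\mathrm{II})\le\|g_{\hat\theta}-Q_h g_{\hat\theta}\|_{C[0,1]}\lesssim|g|_{C^s}h^s=|g|_{C^s}M^{-s}$, provided the evaluation point stays in the domain $[0,1]$ of $g$; this it does after the harmless rescaling of the $x_i$ that makes $\|x_i\|_2\le1$, using that $\hat p_{\hat\theta}$ is bounded away from $0$. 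Term $(\mathrm{I})$ equals $|g(\|Hx_i\|_2^2)-g(\|Hx_i\|_2^2\,\|P\hat\theta\|_2^2/\hat p_{\hat\theta})|$ by the definition of $g_{\hat\theta}$, so by the Lipschitz bound on $g'$ (in particular $\|g'\|_\infty<\infty$) and $\|Hx_i\|_2\le\|x_i\|_2\le1$ one gets $(\mathrm{I})\lesssim|\hat p_{\hat\theta}-\|P\hat\theta\|_2^2|$; in short, $(\mathrm{I})$ measures how accurately the proxy $\hat p_{\hat\theta}$ reproduces $\|P\hat\theta\|_2^2$.

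The remaining, and I expect only genuinely substantive, step is to bound $|\hat p_{\hat\theta}-\|P\hat\theta\|_2^2|$. Since $\hat p_{\hat\theta}$ is the proxy obtained from the almost-sure identity $\|P\theta\|_2^2=1$ (recall $\theta=\nabla f(\eta)/\|\nabla f(\eta)\|_2=P\eta/\|P\eta\|_2$, which lies in $P$, so $\|P\theta\|_2=\|\theta\|_2=1$, using $g'>0$ to kill sign issues), it suffices to bound $|\,\|P\theta\|_2^2-\|P\hat\theta\|_2^2|=|\langle P(\theta-\hat\theta),P(\theta+\hat\theta)\rangle|\le(\|P\theta\|_2+\|P\hat\theta\|_2)\,\|\theta-\hat\theta\|_2\le2\,\|\theta-\hat\theta\|_2$, so this comes down to the accuracy of the approximated normalized gradient. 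Here $\tilde\theta=\nabla_h f(\eta)$ is the divided-difference gradient and $\hat\theta$ its normalization, so the computation leading to \eqref{eqn:aTN-1}, with the dimension conventions of this section ($\dim P=d$), gives $\|\nabla f(\eta)-\nabla_h f(\eta)\|_2\lesssim\sqrt{d}\,h^{s/2}$, and Lemma~\ref{lemma:kolleck} applied with $\lambda=\|\nabla f(\eta)\|_2$ --- together with $\|\nabla_h f(\eta)\|_2\gtrsim\|\nabla f(\eta)\|_2-O(h^{s/2})\gtrsim1$, which uses precisely the hypothesis that $g'$ is bounded below and that $P\eta\neq0$ almost surely --- yields $\|\theta-\hat\theta\|_2\lesssim\sqrt{d}\,h^{s/2}=\sqrt{d}\,M^{-s/2}$. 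The requirement $P\eta\neq0$ is exactly the source of the ``almost surely'' in the statement.

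Assembling, $(\mathrm{I})+(\mathrm{II})\lesssim\sqrt{d}\,M^{-s/2}+M^{-s}\lesssim\sqrt{d}\,M^{-s/2}$ uniformly in $i$ and, crucially, in $H$ --- the subspace $H$ enters only through $\|Hx_i\|_2^2$, which stays in a fixed compact set (contained in $[0,1]$ after the rescaling of the $x_i$) on which $g$ is Lipschitz, so no $H$-dependent constant ever appears. Plugging into the first display and absorbing $\sqrt{n}\asymp N$ into the constant (which is then polynomial in $N$, consistent with Theorem~\ref{thm:main2}) gives $|\hat{\mathcal{F}}^M(H)-\mathcal{F}(H)|\le C\sqrt{d}\,M^{-s/2}$. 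I expect the real work to be confined to the gradient-accuracy estimate of the previous paragraph; the rest is bookkeeping, with the two points to watch being the uniform-in-$H$ domain check for $Q_h g_{\hat\theta}$ and for the evaluation point of $g$ in $(\mathrm{I})$, and the lower bound $\|\nabla_h f(\eta)\|_2\gtrsim1$ --- which is exactly why the hypothesis ``$g'$ bounded from below'' and the almost-sure qualifier are needed.
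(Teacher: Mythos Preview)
Your proposal is correct and follows essentially the same route as the paper: the reverse triangle inequality for $\|\cdot\|_{\ell_2^n}$, a split into a quasi-interpolation term bounded by \eqref{Prob1} and a ``wrong scaling'' term bounded via $\|g'\|_\infty\,\bigl|1-\|P\hat\theta\|_2^2\bigr|$, and the latter controlled by $\|\theta-\hat\theta\|_2\lesssim\sqrt{d}\,h^{s/2}$ exactly through the divided-difference estimate \eqref{eqn:aTN-1} and Lemma~\ref{lemma:kolleck}. The only cosmetic difference is that the paper inserts $g_{\hat\theta}(\|Hx_i\|_2^2)$ rather than $g_{\hat\theta}(\hat p_{\hat\theta}^{-1}\|Hx_i\|_2^2)$, which coincides with your choice once one takes the proxy $\hat p_{\hat\theta}=\|P\theta\|_2^2=1$; your version is a bit more explicit about the domain check for $Q_h$ and about where the uniformity in $H$ and the ``almost surely'' come from.
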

\begin{proof} We \sk{start by estimating $\hat{\mathcal{F}}^M(H)$} as
 \begin{align}
  \hat{\mathcal{F}}^M(H)&=\sqrt{\sum_{i=1}^n\left(f(x_i)-\hat{f}^M_H(x_i)\right)^2}\\&=\sqrt{\sum_{i=1}^n\left((f(x_i)-f_H(x_i))+(f_H(x_i)-\hat{f}^M_H(x_i))\right)^2}
  \\&\le\sqrt{\sum_{i=1}^n(f(x_i)-f_H(x_i))^2}+\sqrt{\sum_{i=1}^n(f_H(x_i)-\hat{f}^M_H(x_i))^2}
  \\&=\mathcal{F}(H)+\sqrt{\sum_{i=1}^n(f_H(x_i)-\hat{f}^M_H(x_i))^2},
 \end{align}
 where we used the triangle inequality for $\|\cdot\|_{\ell_2^n}$ in the second step.
We can apply a similar argument to $\mathcal{F}(H)$ to derive $\mathcal{F}(H)\le \hat{\mathcal{F}}^M(H)+\sqrt{\sum_{i=1}^n(f_H(x_i)-\hat{f}^M_H(x_i))^2}$. This in turn yields the inequality
\begin{align}
|\mathcal{F}(H)-\hat{\mathcal{F}}^M(H)|\le\sqrt{\sum_{i=1}^n(f_H(x_i)-\hat{f}^M_H(x_i))^2}.
\end{align}
We now split this inequality  by
\begin{align}
 |\mathcal{F}(H)-\hat{\mathcal{F}}^M(H)|&\le\sqrt{\sum_{i=1}^n\left(g(\|Hx_i\|_2^2)-g_{\hat{\theta}}(\|Hx_i\|_2^2)+g_{\hat{\theta}}(\|Hx_i\|_2^2)-\hat{g}^M_{\hat{\theta}}(\|Hx_i\|_2^2)\right)^2}\\&\le \sqrt{\sum_{i=1}^n\left(g(\|Hx_i\|_2^2)-g_{\hat{\theta}}(\|Hx_i\|_2^2)\right)^2}+\sqrt{\sum_{i=1}^n\left(g_{\hat{\theta}}(\|Hx_i\|_2^2)-\hat{g}^M_{\hat{\theta}}(\|Hx_i\|_2^2)\right)^2}\\& \le n\left(\|g-g_{\hat{\theta}}\|_{\infty}+\|g_{\hat{\theta}}-\hat{g}_{\hat{\theta}}^M\|_{\infty}\right)=:n(T_1+T_2).
\end{align}
For $T_1$, we estimate
\begin{align}
 T_1&=\sup_{t\in [0,1]}\left|g(t)-g(\|P\hat{\theta}\|_2^2t)\right|\le \|g'\|_{\infty}|1-\|P\hat{\theta}\|_2^2|=\|g'\|_{\infty}\left|\|P\frac{\nabla f(\eta)}{\|\nabla f(\eta)\|_2}\|_2^2-\|P\frac{\nabla_hf(\eta)}{\|\nabla_hf(\eta)\|_2}\|_2^2\right|\\&\le 2\|g'\|_{\infty}\|\frac{\nabla f(\eta)}{\|\nabla f(\eta)\|_2}-\frac{\nabla_hf(\eta)}{\|\nabla_hf(\eta)\|_2}\|_2\le 4\|g'\|_{\infty}\frac{\|\nabla f(\eta)-\nabla_h f(\eta)\|_2}{\|\nabla f(\eta)\|_2}\\
 &\lesssim \sqrt{d}h^{s/2},
\end{align}
with a constant depending on $\|g'\|_{\infty}$ and $\|\nabla f(\eta)\|_2$. Note that we used the estimate from Lemma \ref{claim:S0}.

%
%

Using Property \eqref{Prob1}, we can bound $T_2$ by
\begin{align}
 T_2=\sup_{t\in[0,1]}\left|g_{\hat{\theta}}(t)-g^M_{\hat{\theta}}(t)\right|\le C h^{s},
\end{align}
where $C$ is a constant depending only on the degree of the interpolating polynomials. This proves the lemma.
\end{proof}

\begin{theorem}[Convergence]
Under the assumptions of Theorem \ref{thm:main2}, suppose that $\hat{P}$ is a minimizer of $\hat{\mathcal{F}}^M$. Then 
\begin{align}
 \|P-\hat{P}\|\lesssim n(N+1)\sqrt{d}M^{-s/2},
\end{align}
with a constant depending on the H\"older norm and bounds of $g'$.
 \end{theorem}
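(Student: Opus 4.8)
The plan is to convert the closeness of the two objective functions, established in the previous lemma, into closeness of their minimizers by exploiting a quantitative lower bound on how fast $\mathcal F$ grows away from its unique minimizer $P$. First I would fix $\hat P = \argmin_H \hat{\mathcal F}^M(H)$ and write the standard chain of inequalities
\begin{align}
\mathcal F(\hat P) \le \hat{\mathcal F}^M(\hat P) + \|\mathcal F - \hat{\mathcal F}^M\|_\infty \le \hat{\mathcal F}^M(P) + \|\mathcal F - \hat{\mathcal F}^M\|_\infty \le \mathcal F(P) + 2\|\mathcal F - \hat{\mathcal F}^M\|_\infty,
\end{align}
where $\|\mathcal F - \hat{\mathcal F}^M\|_\infty$ denotes the supremum over $G(d,N)$. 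Since $\mathcal F(P)=0$, the previous lemma gives $\mathcal F(\hat P) \lesssim n(N+1)\sqrt d\, M^{-s/2}$ after tracking the constants (note $n = N(N+1)/2$, so the displayed bound is consistent up to relabelling). So the entire problem reduces to a lower bound of the form $\mathcal F(H) \gtrsim \|H-P\|_{\mathrm{HS}}$ near $P$, i.e. a local coercivity estimate for $\mathcal F$.

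To get this coercivity I would argue as follows. Because $g'$ is bounded below by a positive constant $c_1$ and $g$ is Lipschitz, for each sampling point $x_i$ we have
\begin{align}
|f(x_i) - f_H(x_i)| = |g(\|Px_i\|_2^2) - g(\|Hx_i\|_2^2)| \asymp \bigl|\|Px_i\|_2^2 - \|Hx_i\|_2^2\bigr|,
\end{align}
with both-sided constants depending only on $c_1$ and $\|g'\|_\infty$; hence $\mathcal F(H) \asymp \sqrt{\sum_i (\|Px_i\|_2^2 - \|Hx_i\|_2^2)^2}$. Now $\|Px_i\|_2^2 = \langle x_i, P x_i\rangle$ is a linear function of the projection matrix $P$, so for the choice $x_i = e_j$ or $e_j+e_k$ from Theorem~\ref{thm:FullMeas} the vector $(\|Hx_i\|_2^2 - \|Px_i\|_2^2)_i$ is exactly (twice) the list of entries of the symmetric matrix $H-P$ (as established inside the proof of Theorem~\ref{thm:FullMeas}). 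Therefore $\sum_i (\|Px_i\|_2^2 - \|Hx_i\|_2^2)^2$ equals, up to an absolute constant, $\sum_{j,k}(H-P)_{jk}^2 = \|H-P\|_{\mathrm{HS}}^2$, and we obtain the clean identity $\mathcal F(H) \asymp \|H-P\|_{\mathrm{HS}}$ with constants depending only on $c_1$, $\|g'\|_\infty$ and dimension. Combining with the first display yields $\|\hat P - P\|_{\mathrm{HS}} \lesssim \mathcal F(\hat P) \lesssim n(N+1)\sqrt d\, M^{-s/2}$, which is the claim; the $\mathcal{LR}(2)$ case is the specialization $s=2$.

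The main obstacle is the coercivity step, and in particular making sure the lower bound $|f(x_i)-f_H(x_i)| \gtrsim |\|Px_i\|_2^2 - \|Hx_i\|_2^2|$ is legitimate: the inequality $g'\ge c_1 > 0$ only controls $g$ on its actual domain $[0,1]$, so I must check that $\|Hx_i\|_2^2$ stays in (or close enough to) $[0,1]$ for the relevant $H$ — this is where one uses that $x_i\in\{e_j,\,e_j+e_k\}$ has controlled norm and that we only need the estimate locally around $P$, or alternatively restricts attention to the sublevel set $\{\mathcal F(H)\le \mathcal F(P)+\text{error}\}$ which, by the global injectivity from Theorem~\ref{thm:FullMeas} together with compactness of $G(d,N)$, is contained in a small neighborhood of $P$ once the error is small. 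A secondary bookkeeping point is that the factor $n(N+1)\sqrt d$ in the statement absorbs $\|\mathcal F - \hat{\mathcal F}^M\|_\infty \le n(T_1+T_2)$ from the preceding lemma together with the constant relating $\mathcal F(\hat P)$ to $\|\hat P - P\|_{\mathrm{HS}}$, so one should simply state the final bound with an unspecified polynomial-in-dimension constant, as Theorem~\ref{thm:main2} does.
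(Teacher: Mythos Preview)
Your proposal is correct and follows essentially the same route as the paper: bound $\mathcal F(\hat P)$ via the minimizer chain and the previous lemma, use $g'\ge c_1>0$ together with the mean value theorem to pass to $\sqrt{\sum_i(\|Px_i\|_2^2-\|Hx_i\|_2^2)^2}$, and then invert the linear map from the entries of the symmetric matrix $H-P$ to these measurements. One small imprecision worth fixing: the off-diagonal measurement $\|H(e_j+e_k)\|_2^2-\|P(e_j+e_k)\|_2^2$ equals $(H-P)_{jj}+2(H-P)_{jk}+(H-P)_{kk}$, not simply twice a single entry of $H-P$, so the equivalence constant in your coercivity step is not absolute but of order $N$ --- this is exactly the factor $(N+1)$ that the paper extracts by performing this inversion explicitly.
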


 \begin{proof}
 Let $H_0$ be a minimizer of $\hat{\mathcal{F}}^M$. First note that
 \begin{align}
  \mathcal{F}(H_0)\le 2C\sqrt{d}h^{s/2},
 \end{align}
because $\hat{\mathcal{F}}^M(H_0)\le \hat{F}^M(P)=\hat{\mathcal{F}}^M(P)-\mathcal{F}(P)\le C\sqrt{d}h^{s/2}$, where we used the fact that $H$ is a minimizer in the first step, that $\mathcal{F}(P)=0$ in the second step and the statement of the last lemma in the third step. The stated bound then follows from $\mathcal{F}(H_0)\le \left|\mathcal{F}(H_0)-\hat{\mathcal{F}}^M(H_0)\right|+\left|\hat{\mathcal{F}}^M(H_0)\right|\le 2C\sqrt{d}h^{s/2}$. This yields

  \begin{align}\label{eqn:proofMain}
  2C\sqrt{d}M^{-s/2}\ge \mathcal{F}(H) =\sqrt{\sum_{i=1}^n\left(g(\|Px_i\|_2^2)-g(\|Hx_i\|^2_2)\right)^2}\ge \min|g'|\sqrt{\sum_{i=1}^n\left(\|Px_i\|_2^2-\|Hx_i\|_2^2\right)^2}.
 \end{align}
 
Now define the matrix $\tilde{P}$ by
  \begin{align*}
   \tilde{P}_{ii}&=\|Pe_i\|_2^2 \hspace{0.2cm}\text{ and }\hspace{0.2cm} \tilde{P}_{ij}=\|P(e_i+e_j)\|_2^2,
 \end{align*}
 and $\tilde{H}$ analogously by
 \begin{align*}
   \tilde{H}_{ii}&=\|H_0e_i\|_2^2 \hspace{0.2cm}\text{ and }\hspace{0.2cm} \tilde{H}_{ij}=\|H_0(e_i+e_j)\|_2^2.
 \end{align*}
 We further denote the matrix which only contains the diagonal entries of a matrix $P$, and is zero elsewhere, by $P_D$ and the matrix which is zero along the diagonal and coincides at all off-diagonal entries with $P$ by $P_{OD}$. We then have
 \begin{align}
  \sqrt{\sum_{i=1}^n\left(\|Px_{i}\|_2^2-\|H_0x_{i}\|_2^2\right)^2}=\|\tilde{P}_D-\tilde{H}_D+\frac{1}{\sqrt{2}}\left(\tilde{P}_{OD}-\tilde{H}_{OD}\right)\|_{\text{F}}.
 \end{align}

Note that the $\{ij\}$-th entry $P_{ij}$ of the projection matrix $P$ to the belonging subspace is given by
  \begin{align*}
   P_{ii}&=\|Pe_i\|_2^2 \hspace{0.2cm}\text{ and } \hspace{0.2cm} 2P_{ij}=\|P(e_i+e_j)\|_2^2-\|Pe_i\|_2^2-\|Pe_j\|_2^2.
  \end{align*}
 Thus, defining 
\begin{align*}
B=\begin{bmatrix}
    0&P_{11}-H_{11} &\dots& P_{11}-H_{11}\\
    P_{22}-H_{22}&0&\dots&P_{22}-H_{22}\\
    \vdots&&&\vdots\\
    P_{NN}-H_{NN}&\dots&P_{NN}-H_{NN}&0
    \end{bmatrix},
 \end{align*}   
 gives
 \begin{align*}
\|P-H\|_{\text{F}}&=\left\|\tilde{P}_D-\tilde{H}_D+\frac{1}{2}\left(\tilde{P}_{OD}-\tilde{H}_{OD}-B-B^T\right)\right\|_{\text{F}}\\&\le 
\left\|\tilde{P}_D-\tilde{H}_D+\frac{1}{\sqrt{2}}\left(\tilde{P}_{OD}-\tilde{H}_{OD}\right)\right\|_{\text{F}}+\frac{\sqrt{2}-1}{2}\|\tilde{P}_{OD}-\tilde{H}_{OD}\|_{\text{F}}+\|B\|_{\text{F}}\\&\le \sqrt{\sum_{i=1}^n\left(\|Px_{ij}\|_2^2-\|Hx_{ij}\|_2^2\right)^2} +(N-1)\|\tilde{P}_D-\tilde{H}_D\|_{\text{F}}+1/4\|\tilde(P)_{OD}-\tilde{H}_{OD}\|_{\text{F}}\\&\le(N+1)\sqrt{\sum_{i=1}^n\left(\|Px_{ij}\|_2^2-\|Hx_{ij}\|_2^2\right)^2}.
 \end{align*} 
 Applying \eqref{eqn:proofMain}, we can, therefore, deduce
 \begin{align}
  \frac{\min|g'|}{N+1}\|P-H\|_{\text{F}}\le \min|g'|\sqrt{\sum_{i=1}^n\left(\|Px_i\|_2^2-\|Hx_i\|_2^2\right)^2}\le 2C\sqrt{d}M^{-s/2},
 \end{align}
which proves the claim.
 \end{proof}

\section{Numerical Results}\label{sec:num}
In this section we investigate the performance of the approximation schemes presented in the last sections. Our algorithms separate the approximation task in approximating the one-dimensional function $g$ and the subspace $P$ independently. Consequently, the quality of the uniform approximation of $f$ by $\hat{f}$ is then bounded by the corresponding error between $g$ and $\hat{g}$ and the error between $P$ and $\hat{P}$. In what follows, we will only discuss the approximation error between $P$ and $\hat{P}$, because the approximation error between $g$ and $\hat{g}$ is well known, cf. \cite{DeVoreLorentz} and Section \ref{sec:pre}.

We consider two different functions, one which fulfills all assumptions of Theorem \ref{thm:main2}, namely $g=\tanh$, and one which does not fulfill the assumption of a positive derivative, namely $g=\sin(5\cdot)$, which is not monotone on its domain. We further consider for the dimension of the subspace $d=1$ and $8$. For the dimension of the ambient space we choose $N=10$ and $50$. For each combination of $N$, $d$ and $g$ we ran $100$ experiments, where we drew a subspace $P\in G(d,N)$ uniformly at random. Note that the analysis of the algorithm makes heavy use of the monotonicity of $g$. However, the numerics show that we have comparably good results for the non-monotonic function $\sin(5\cdot)$.

\subsection{Numerical Results for ATPE}
The implementation of ATPE (Algorithm \ref{GE}) is straightforward. However, to draw a random vector from some tangent plane, we used a method of the Matlab toolbox Manopt \cite{Boumal} to draw a random vector of $\R^N$ and projected it to the tangent plane. The results can be seen in Figure \ref{fig:gradest}. They show as promised that the error of the approximation becomes arbitrarily small for all considered choices of $d,N$ and $g$, if we choose $h$ in computing the divided differences certainly small.

\begin{figure}[H]
 \includegraphics[scale=0.42]{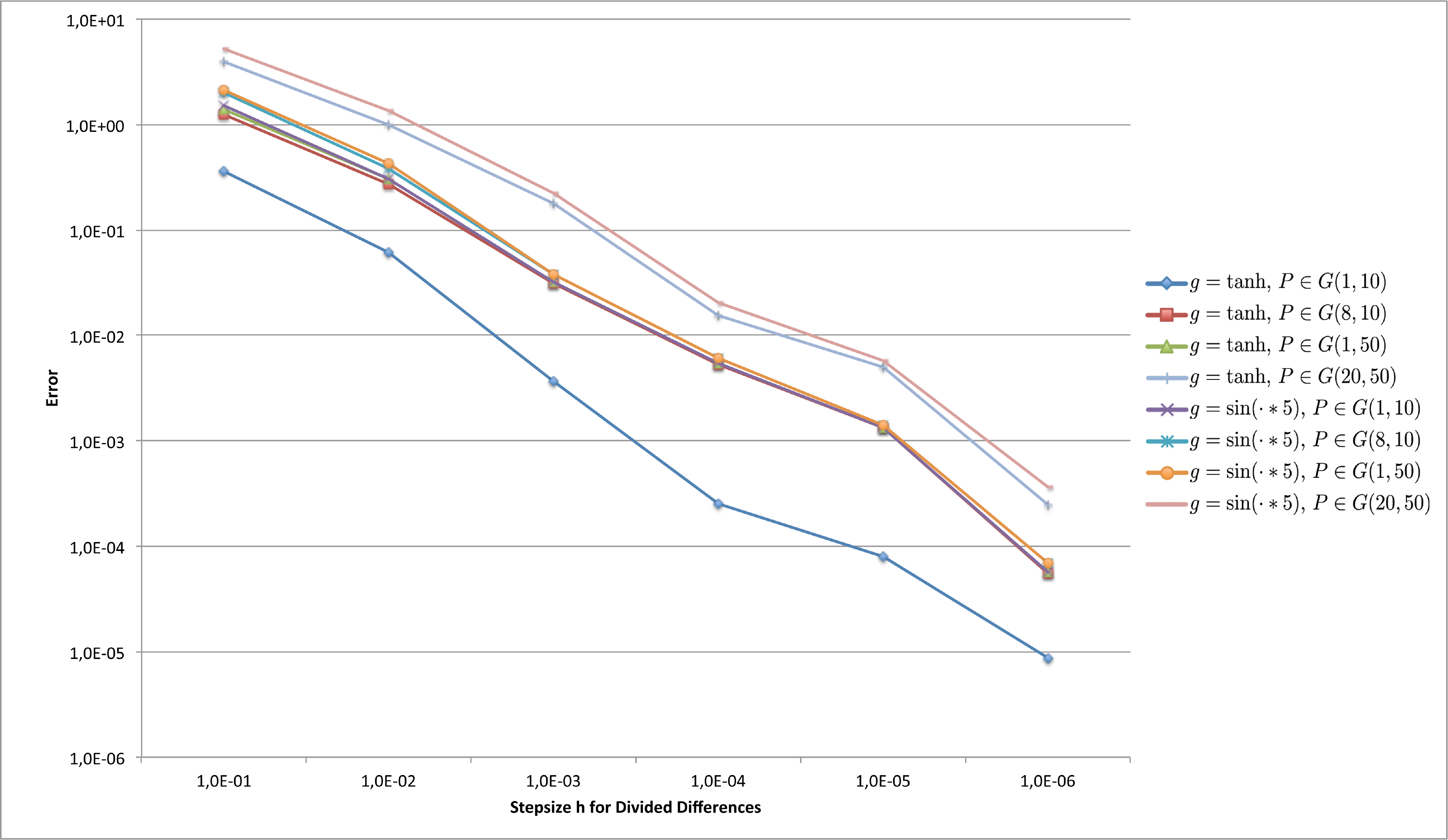}
 \caption{Average error of the approximation of randomly drawn subspaces $P$ using ATPE depending of the step size $h$ in computing $\nabla_h f$. }\label{fig:gradest}
\end{figure}

\subsection{Numerical Results for OGM (Algorithm \ref{Opt})}

To solve the optimization problem \eqref{apprObj}, we leverage the freely available Matlab toolbox Manopt \cite{Boumal}. In particular, we imposed the manifold constraint by choosing the built-in \texttt{grassmannfactory} and we selected the built-in \texttt{steepestdescent} solver, as steepest descent is a well-known method to solve optimization problems. This solver requires both a cost function and the Euclidean gradient of the cost function as inputs:\small{
\begin{align}
 \texttt{cost(H)}&= 0.25\sum_{k=1}^n\left(f(x_k)-\texttt{interp1}(\{ih\}_{i=1}^N, \{f(\|P(ih\eta)\|_2^2)\}_{i=1}^M, \|Hx_k\|_2^2,\text{'spline'})\right)^2=: 0.25\sum_{k=1}^n\left(f(x_k)-\text{\textbf{f}}(x_k)\right)\\
 \texttt{egrad}&=\sum_{k=1}^n\left(f(x_k)-\text{\textbf{f}}(x_k)\right)(\texttt{interp1}(\{ih\}_{i=1}^N, \{f(\|P(ih\eta)\|_2^2)\}_{i=1}^M, \|Hx_k\|_2^2+h/100,\text{'spline'})-\text{f}(x_k))\text{\textbf{H}}_k
,
\end{align}}
\normalsize
where $\text{\textbf{H}}_k=\left[H_1x_kx_k \dots H_Nx_kx_k\right]$ and $\texttt{interp1}(x,v,xq,\text{'spline'})$ returns interpolated values of a one-dimen\-sional function at specific query points $xq$ using spline interpolation. The vector $x$ contains the sample points, and $v$ contains the corresponding function values. Note, that the Euclidean gradient ignores the manifold constraints. 

Further observe, that even so OGM is shown to succeed to find an objective function whose minimizer is a suitable approximation of the wanted subspace, it is not obvious that the optimization algorithm can succeed to find this minimizer. In order to hope for this, we need to input a default subspace to the optimization algorithm which is not to far from the wanted one. In the following we choose those default subspaces uniformly at random in two different neighborhoods of the wanted subspace.  

Figure \ref{fig:num2} shows the results for both functions and the different choices of $N$ and $d$, if the default value for the optimization is a randomly chosen subspace in a distance of at most $\sqrt{2d(1-\cos(\pi/3))}$ to the unique minimum $P$ of the objective function \eqref{eqn:objWf}, i.e., if the default value is an rotation of $P$ with an angle of at most $\pi/3$. The error is given in a logarithmic scale and the lines correspond to the different choices of $g$, $N$ and $d$. We see that we can recover all randomly drawn subspaces successfully, whenever the dimension is $d=1$ or whenever $g=\tanh$. This fits to our analysis, where the theorems hold true for injective functions and indeed $\sin(5\cdot)$ is not at all injective.

Figure \ref{fig:num3} shows that for the case that the dimension is $d=8$ and and that we have $g=\sin(5\cdot)$, we can recover $95\%$ of randomly drawn subspaces if we ensure that the default value for the optimization is at most in a distance of $\sqrt{2d(1-\cos(\pi/4))}$ to $P$. Thus, we see that for a more carefully chosen default value, all subspaces can be recovered with a reasonable small error.
Note that the case $d=1$ corresponds to the case of a usual ridge function, because if $\dim P=1$ we measure the distance to the $N-1$-dimensional subspace $P^\perp$.

\begin{figure}[H]\begin{center}
                   \subfigure[Average]{\includegraphics[scale=0.235]{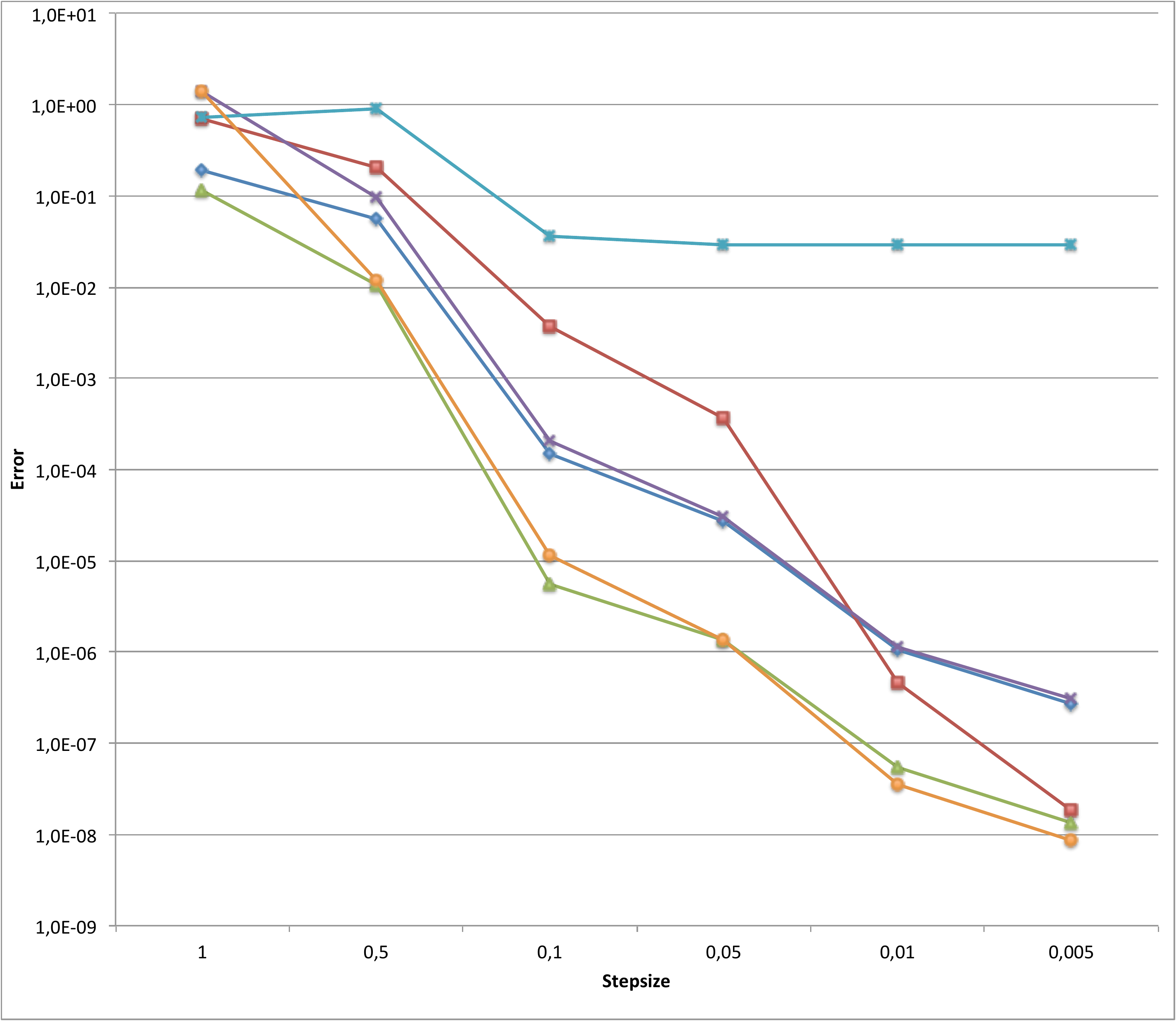}}\hspace*{0.2cm}
		   \subfigure[$95$\% Quantile]{\includegraphics[scale=0.235]{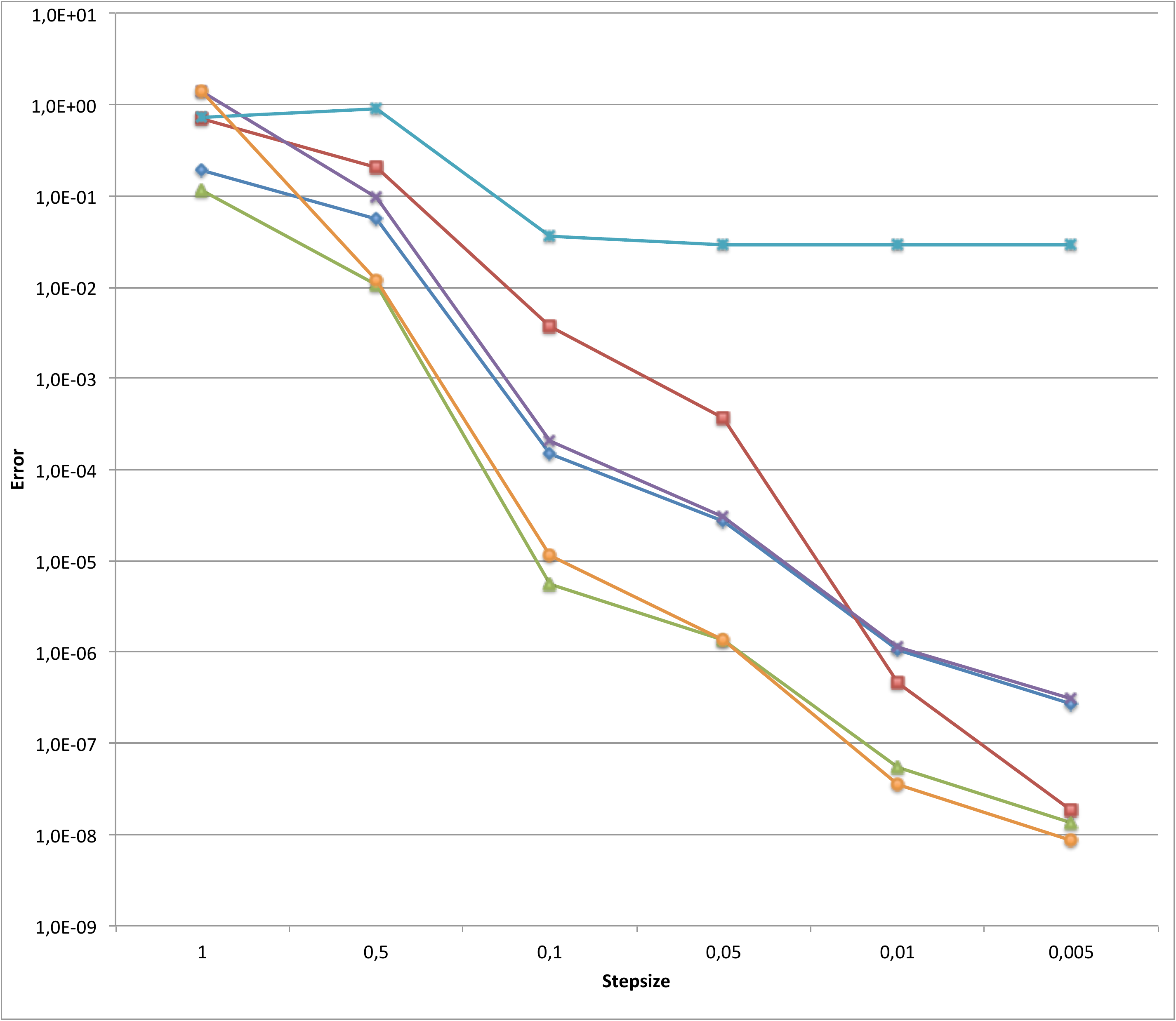}}
		   \subfigure{\includegraphics[scale=0.35]{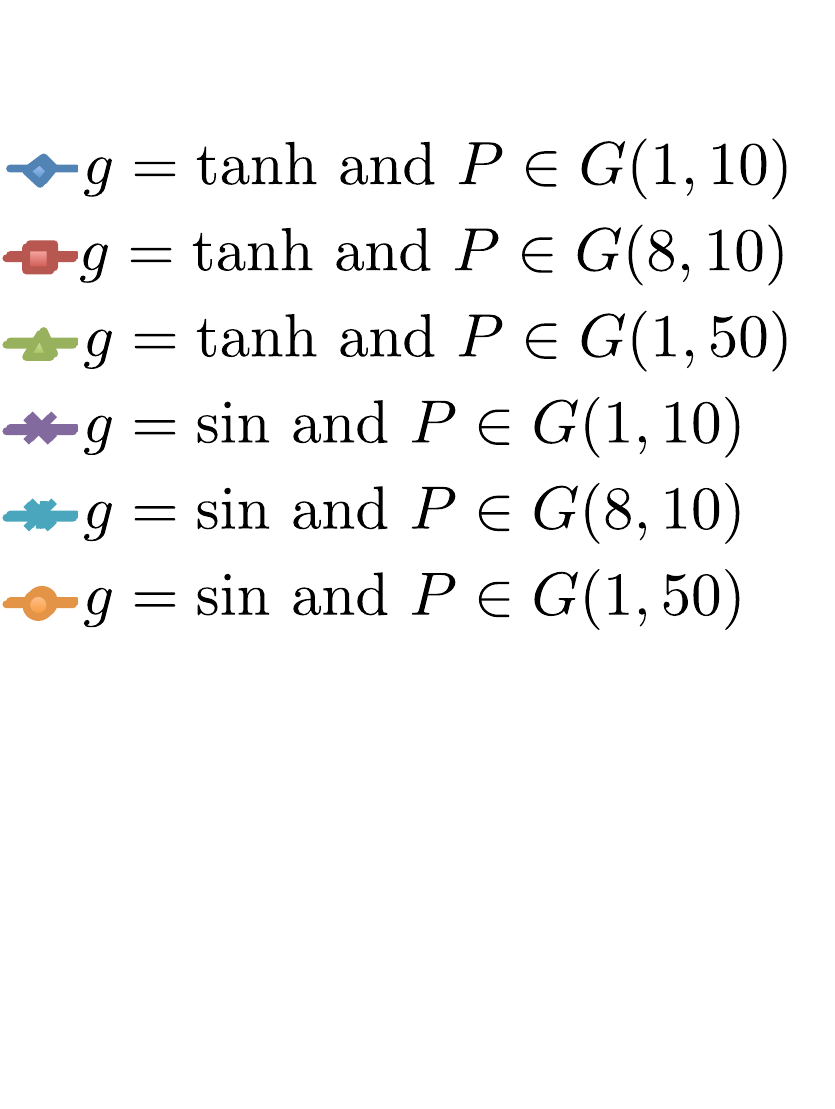}}\end{center}
 \vspace{-0.6cm}
 \caption{Default value for optimization is a random rotation of $P$ by a factor of at most $\pi/3$.}\label{fig:num2}\end{figure}
\begin{figure}[H]\begin{center}
                   \subfigure[Average]{\includegraphics[scale=0.235]{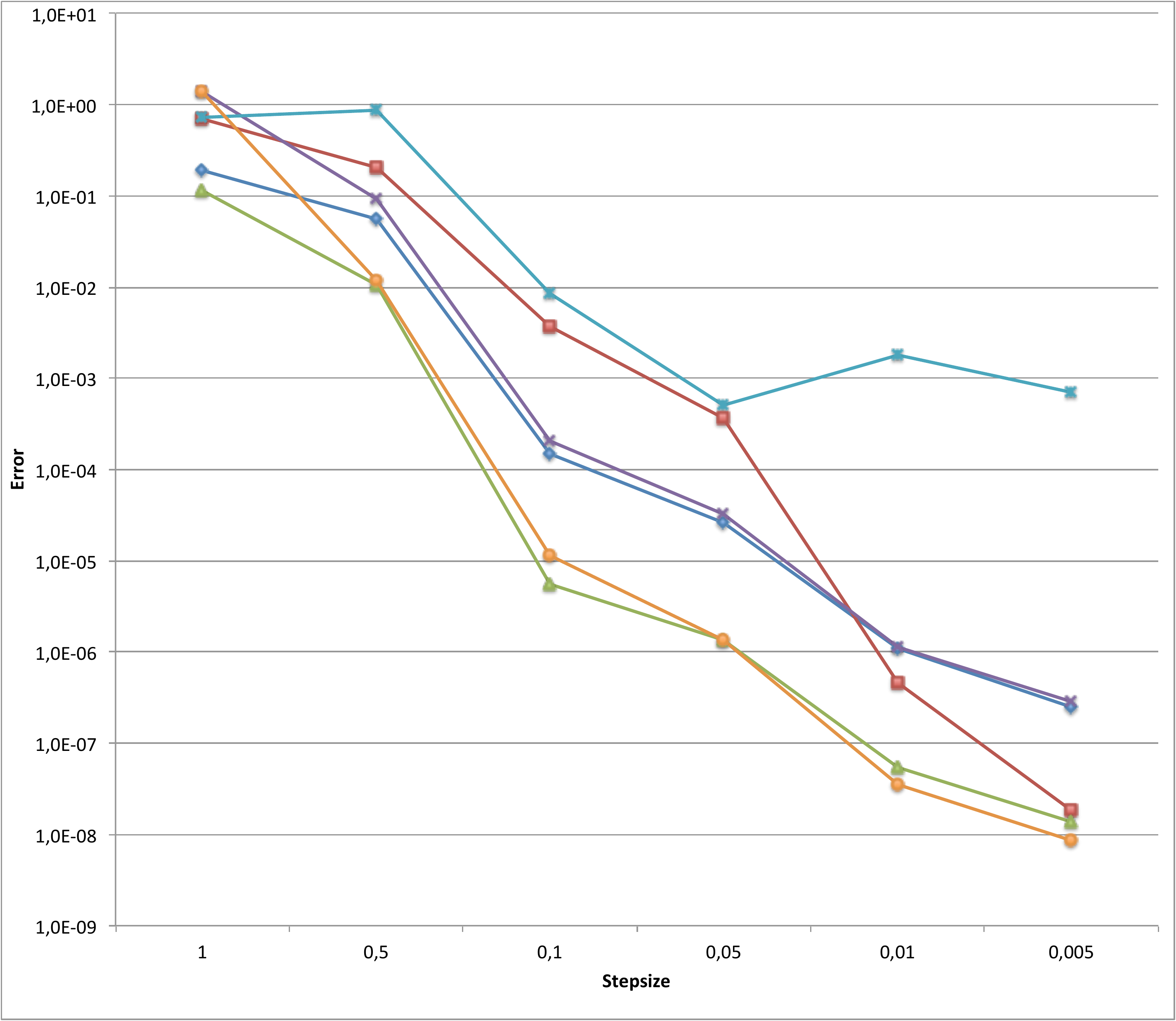}}\hspace*{0.2cm}
		   \subfigure[$95$\% Quantile]{\includegraphics[scale=0.235]{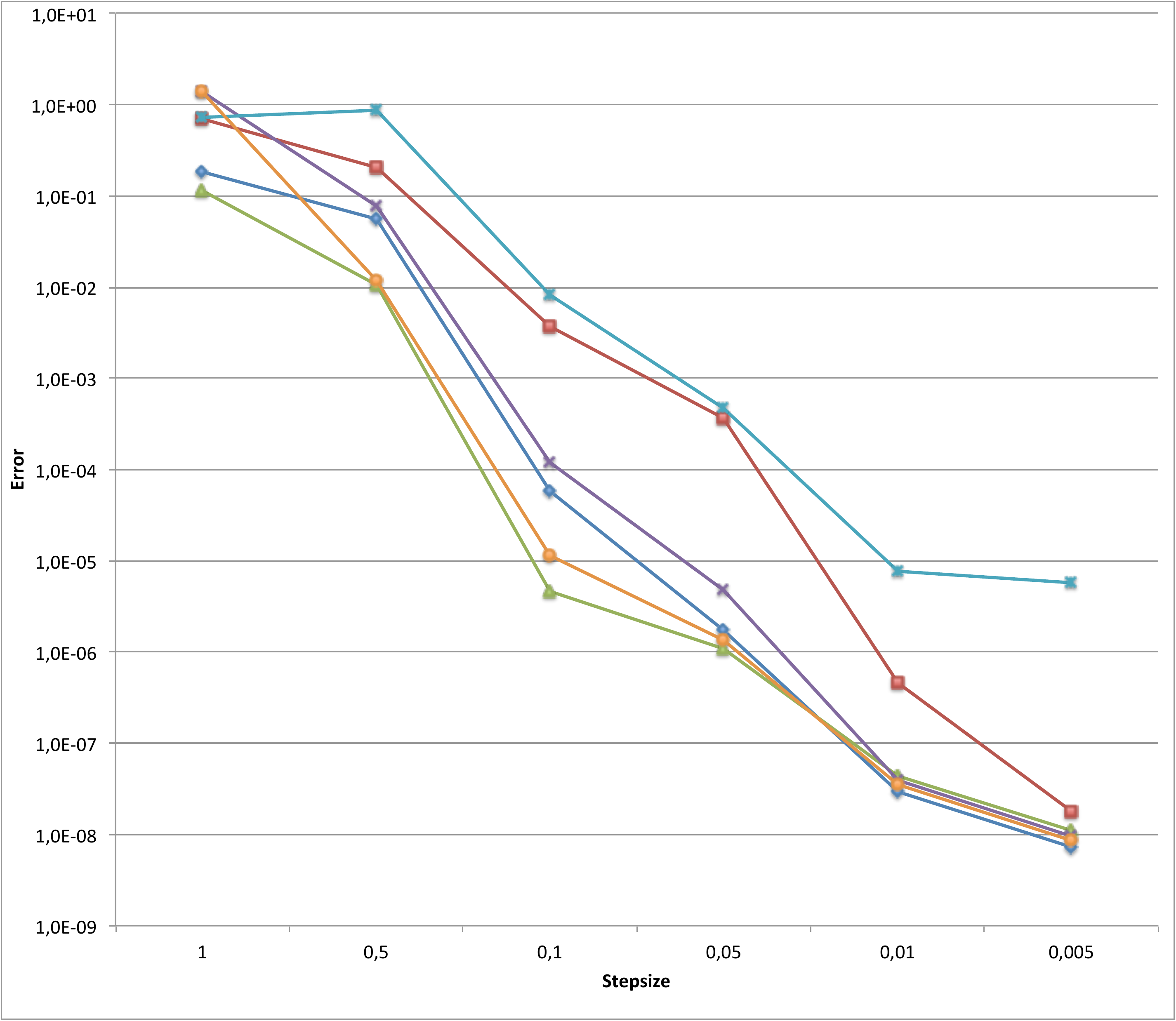}}
		   \subfigure{\includegraphics[scale=0.35]{ManOpt/Ergebnis/legend2.pdf}}\end{center}
 \vspace{-0.6cm}
 \caption{Default value for optimization is a random rotation of $P$ by a factor of at most $\pi/4$.}\label{fig:num3}\end{figure}

Furthermore, as we have seen in Subsection \ref{subsec:ProjRetr} we can use fewer measurements to ensure almost injectivity. We then also have to adapt the convegrence analysis of $\hat{F}_M$.

%
%
%
%

\vspace{.1in}
\section*{Acknowledgements.}
The author acknowledges support by the DFG Grant 1446/18 and the Berlin Mathematical School. In particular the author acknowledges Ingrid Daubechies, Gitta Kutyniok and Mauro Maggioni for helpful discussions. 

\vspace{.1in}
\nocite{*}
\bibliographystyle{amsplain}
\bibliography{MaPaper.bib}

\providecommand{\bysame}{\leavevmode\hbox to3em{\hrulefill}\thinspace}
\providecommand{\MR}{\relax\ifhmode\unskip\space\fi MR }
\providecommand{\MRhref}[2]{%
  \href{http://www.ams.org/mathscinet-getitem?mr=#1}{#2}
}
\providecommand{\href}[2]{#2}
\begin{thebibliography}{10}

\bibitem{Bellmann}
R.~E. Bellman, \emph{Adaptive control processes: a guided tour}, Princeton
  University Press (1961).

\bibitem{Dantzig}
E.~J. Cand{\`e}s and Y.~Plan, \emph{Tight oracle inequalities for low-rank
  matrix recovery from a minimal number of noisy random measurements}, {IEEE}
  Trans. Inform. Theory \textbf{57} (2011), no.~4, 2342--2359.

\bibitem{Candes}
E.J. Cand{\`e}s, \emph{Harmonic analysis of neural networks}, Appl. Comput.
  Harmon. Anal. \textbf{6} (1999), no.~2, 197--218.

\bibitem{daubechiesRidge}
A.~Cohen, I.~Daubechies, R.A. DeVore, G.~Kerkyarcharian, and D.~Picard,
  \emph{Capturing ridge functions in high dimensions from point queries},
  Constr. Approx. \textbf{35} (2012), 225--243.

\bibitem{Schwab}
A.~Cohen, R.~A. DeVore, and C.~Schwab, \emph{Convergence rates of best n-term
  galerkin approximations for a class of elliptic spdes}, Found. Comput. Math.
  \textbf{10} (2010), no.~6, 615--646.

\bibitem{MauroDiffusion}
R.~R. Coifman and M.~Maggioni, \emph{Diffusion wavelets}, Appl. Comput. Harmon.
  Anal. \textbf{21} (2006), no.~1, 53--94.

\bibitem{Baraniuk}
M.~Davenport, C.~Hegde, M.F. Duarte, and R.G. Baraniuk, \emph{Joint manifolds
  for data fusion}, {IEEE} Trans. Image Process. \textbf{19} (2010), no.~10,
  2580--2594.

\bibitem{DeVoreLorentz}
R.~A. DeVore and G.G. Lorentz, \emph{Constructive approximation}, vol. 303,
  Springer, 1993.

\bibitem{Nonlinear}
R.~R. DeVore, \emph{Nonlinear approximation}, Acta Numer. \textbf{7} (1998),
  51--150.

\bibitem{Wojta}
R.A. DeVore, G.~Petrova, and P.~Wojtaszczyk, \emph{Approximation of functions
  of few variables in high dimensions}, Constr. Approx. \textbf{33} (2011),
  no.~1, 125--143.

\bibitem{Donoho}
D.~L. Donoho and I.~M. Johnstone, \emph{Projection based regression and a
  duality with kernel methods}, Ann. Statist. \textbf{17} (1989), 58--106.

\bibitem{Mixon}
M.~Fickus and D.~G. Mixon, \emph{Projection retrieval: Theory and algorithms},
  2015 International Conference on Sampling Theory and Applications (SampTA)
  (2015).

\bibitem{Fickus}
M.~Fickus, D.G. Mixon, A.~A. Nelson, and Y.~Wang, \emph{Phase retrieval from
  very few measurements}, Linear Algebra Appl. \textbf{449} (2014), 475--499.

\bibitem{fornasierVybiral}
M.~Fornasier, K.~Schnass, and J.~Vybiral, \emph{Learning functions of few
  arbitrary linear parameters in high dimensions}, Found. Comput. Math.
  \textbf{12} (2012), 229--262.

\bibitem{Masterarbeit}
S.~Keiper, \emph{Analysis of generalized high-dimensional ridge functions},
  Master's thesis, TU Berlin, 2015.

\bibitem{kolleck}
A.~Kolleck and J.~Vybiral, \emph{On some aspects of approximation of ridge
  functions}, J. Appr. Theory \textbf{194} (2015), 35--61.

\bibitem{Phys}
M.~H. Maathuis, M.~Kalisch, and P.~B{\"u}hlmann, \emph{Estimating
  high-dimensional intervention effects from observational data}, Ann. Statist.
  \textbf{37} (2009), no.~6A, 3133--3164.

\bibitem{Entropy}
S.~Mayer, T.~Ullrich, and J.~Vyb{\'\i}ral, \emph{Entropy and sampling numbers
  of classes of ridge functions}, Constr. Approx. (2014), 1--34.

\bibitem{Boumal}
P-A~Absil N.~Boumal, B.~Mishra and R.~Sepulchre, \emph{Manopt, a matlab toolbox
  for optimization on manifolds}, JMLR \textbf{15} (2014), 1455--1459.

\bibitem{Novak}
E.~Novak and H.~Woniakowski, \emph{Approximation of infinitely differentiable
  multivariate functions is intractable}, J. Complexity \textbf{25} (2009),
  no.~4, 398--404.

\bibitem{Pinkus}
A.~Pinkus, \emph{Approximation theory of the mlp model in neural networks},
  Acta Numer. \textbf{8} (1999), 143--195.

\bibitem{cevherMatr}
H.~Tyagi and V.~Cevher, \emph{Active learning of multi-index function models},
  Adv. Neural Inf. Process. Syst., 2012, pp.~1466--1474.

\bibitem{cevherVec}
\bysame, \emph{Learning ridge functions with randomized sampling in high
  dimensions}, 2012 IEEE International Conference on Acoustics, Speech and
  Signal Processing (ICASSP), IEEE, 2012, pp.~2025--2028.

\bibitem{wain}
M.~J. Wainwright, \emph{Information-theoretic limits on sparsity recovery in
  the high-dimensional and noisy setting}, {IEEE} Trans. Inform. Theory
  \textbf{55} (2009), no.~12, 5728--5741.

\bibitem{Mauro}
G.~Chen W.K.~Allard and M.~Maggioni, \emph{Multi-scale geometric methods for
  data sets ii: Geometric multi-resolution analysis}, Appl. Comput. Harmon.
  Anal. \textbf{32} (2012), no.~3, 435--462.

\end{thebibliography}

\end{document}